\tikzset{
    desicion/.style={
        diamond,
        draw,
        text width=3em,
        text badly centered,
        inner sep=0pt
    },
    block/.style={
        rectangle,
        draw,
        text width=8em,
        text centered,
        rounded corners
    },
     block2/.style={
        rectangle,
        draw,
        text width=10em,
        text centered,
        rounded corners
    },
    cloud/.style={
        draw,
        ellipse,
        minimum height=2em
    },
    descr/.style={
        fill=white,
        inner sep=2.5pt
    },
    connector/.style={
        -latex,
        font=\scriptsize
    },
    rectangle connector/.style={
        connector,
        to path={(\tikztostart) -- ++(#1,0pt) \tikztonodes |- (\tikztotarget) },
        pos=0.5
    },
    rectangle connector/.default=-2cm,
    straight connector/.style={
        connector,
        to path=--(\tikztotarget) \tikztonodes
    }
}
\setlist[itemize]{wide=0pt, leftmargin=0pt}
\theoremstyle{plain}
\newtheorem{lem}{Lemma}[section]
\newtheorem{thm}[lem]{Theorem}
\newtheorem{prop}[lem]{Proposition}
\newtheorem{cor}[lem]{Corollary}
\theoremstyle{definition}
\newtheorem{defn}[lem]{Definition}
\newtheorem{rem}[lem]{Remark}
\newtheorem*{assc1}{Assumption $\mathbf{C^1}$}
\numberwithin{equation}{section}
\newcommand{\N}{\mathbb{N}}
\newcommand{\R}{\mathbb{R}}
\newcommand{\C}{\mathbb{C}}
\newcommand{\K}{\mathbb{K}}
\newcommand{\norm}[1]{\left\Vert#1\right\Vert}
\newcommand{\abs}[1]{\left\vert#1\right\vert}
\newcommand{\eps}{\varepsilon}
\newcommand{\diam}{\operatorname{diam}}
\newcommand{\dist}{\operatorname{dist}}
\newcommand{\dimf}{\operatorname{dim}_{f}}
\newcommand{\cl}{\operatorname{cl}}
\newcommand{\lin}{\operatorname{span}}
\renewcommand{\geq}{\geqslant}
\renewcommand{\leq}{\leqslant}
\begin{document}
\title[A panoramic view of exponential attractors]{A panoramic view of exponential attractors}

\author{Rados{\l}aw Czaja$^{1}$}

\author{Stefanie Sonner$^{2,*}$}

\address{$^1$Institute of Mathematics, University of Silesia in Katowice, Bankowa 14, 40-007 Katowice, Poland.
E-mail address: \textup{radoslaw.czaja@us.edu.pl}}

\address{$^2$Department of Mathematics, Radboud University Nijmegen, PO Box 9010, 6500 GL Nijmegen, The Netherlands.
E-mail address: \textup{stefanie.sonner@ru.nl}}

\thanks{$^{*}$ Corresponding author}

\subjclass[2020]{Primary 37L30. Secondary 35B41, 37-02, 37L25.}
\keywords{Exponential attractor, global attractor, fractal dimension, dissipative infinite-dimensional dynamical system.}


\begin{abstract}
We state necessary and sufficient conditions for the existence of $T$-discrete exponential attractors for semigroups in complete metric spaces. These conditions are formulated in terms of a covering condition for iterates of the absorbing set under the time evolution of the semigroup and imply the existence and finite-dimensionality of the  global attractor. We then review, generalize and compare existing construction methods for exponential attractors and show that they all imply the covering condition. Furthermore, we relate the results and concept of $T$-discrete exponential attractors to the classical notion of exponential attractors.
\end{abstract}

\maketitle
\section{Introduction}

Exponential attractors of infinite-dimensional dynamical systems are compact, positively invariant subsets of finite fractal dimension that attract all bounded subsets at an exponential rate. 
They contain the global attractor and hence, the existence of an exponential attractor implies the existence  of the global attractor and its finite fractal dimension. Different from global attractors, exponential attractors are not unique and there exist different methods for their construction. The first existence proof by A.~Eden, C.~Foias, B.~Nicolaenko and R.~Temam in \cite{EFNT94} was developed for semigroups in Hilbert spaces and is based on the squeezing property of the semigroup. The most general construction method by I.~Chueshov and I.~Lasiecka in \cite{cl2008} is formulated for semigroups in complete metric spaces and is based on the quasi-stability of the semigroup. We refer to the monographs \cite{EFNT94, chueshov} and the book chapter \cite{MiZe} for an overview of existence results, properties of exponential attractors and historical remarks.

In this paper we take a broader perspective, ``a panoramic view'', aiming to provide a~unifying framework for the construction of exponential attractors and to generalize, improve and compare existing, commonly used construction methods. Following the approach by D.~Pra\v{z}\'ak~\cite{Pra03b}, we formulate abstract necessary and sufficient conditions for the existence of exponential attractors for time discrete semigroups in complete metric spaces. The same characterization is possible for semigroups defined on the time interval $[0,\infty)$ if we replace the positive invariance of the exponential attractor by the positive invariance with respect to discrete time steps $T>0$. This leads to the concept of  $T$-discrete exponential attractors.   $T$-discrete exponential attractors are equivalent to classical exponential attractors in the time discrete setting. Furthermore, their construction for semigroups defined for times $t\in[0,\infty)$ does not require the H\"older continuity in time of the semigroup, as typically assumed,  which is a restrictive assumption. Our criterion for the existence of a $T$-discrete exponential attractor is formulated in terms of a covering condition for iterates of the absorbing set under the time evolution of the semigroup. The parameters in the covering condition determine the estimate for the fractal dimension of the exponential attractor and the exponential rate of attraction. Moreover, if a~$T$-discrete exponential attractor
exists, it contains the global attractor, which hence, exists and has finite fractal dimension. We also observe that the existence of a $T$-discrete exponential attractor for some time step $T>0$ implies the existence of a $\widetilde T$-discrete exponential attractor for arbitrarily small $\widetilde T>0$. 

Using our characterization of semigroups possessing a $T$-discrete exponential attractor we then verify the covering condition for widely used construction methods for exponential attractors. Generalizing previous notions and methods we can compare these different approaches. We start with the most general setting, namely quasi-stable semigroups in complete metric spaces \cite{cl2008}. We show that the quasi-stability of a semigroup implies the covering condition and hence, the existence of a $T$-discrete exponential attractor. The dimension estimate and exponential rate of attraction are determined by the parameters in the quasi-stability condition. Then, we consider semigroups in Banach spaces that satisfy a~generalized smoothing property based on the compact embedding between the phase space and another normed space \cite{CzEf}. We show that such semigroups are quasi-stable and hence, possess a $T$-discrete exponential attractor. The dimension estimate and the exponential rate of attraction are determined by embedding properties of the corresponding spaces. A subclass of these semigroups are semigroups in Banach spaces satisfying the smoothing property \cite{EfMiZe}. These semigroups can be decomposed into a sum of a~compact map and a contraction. Finally, we discuss two classes of semigroups that were originally considered in a~Hilbert space setting, namely, squeezing semigroups \cite{EFNT94} and Ladyzhenskaya type semigroups \cite{Lad82b}. We generalize the setting to Banach spaces and introduce the notion of a generalized squeezing property. Instead of an orthogonal projection onto a finite-dimensional subspace, we allow for
a possibly nonlinear map taking values in a finite-dimensional normed space.
We show that squeezing semigroups satisfy the generalized squeezing property while semigroups of Ladyzhenskaya type satisfy both, the generalized squeezing property and the smoothing property. In fact, they can also be cast into the framework of squeezing semigroups. Hence, these classes of semigroups are quasi-stable. For semigroups in Hilbert spaces we improve the estimates for the fractal dimension of the exponential attractors 
compared to the bounds obtained via quasi-stability by exploiting the Hilbert structure of the phase space and the specific properties of squeezing semigroups and Ladyzhenskaya type semigroups, respectively. Furthermore, if the phase space is a Hilbert space, we show that the class of semigroups satisfying the smoothing property coincides with the class of Ladyzhenskaya type semigroups.

The following diagrams summarize our main results. The first figure 
illustrates that quasi-stability implies the covering condition, which is equivalent to the existence of $T$-discrete exponential attractors if the semigroup possesses a bounded absorbing set:

\begin{figure}[!htb]
\centering
\begin{tikzpicture}
    \matrix (m)[matrix of nodes, column  sep=0.3cm,row  sep=5mm, align=center, nodes={rectangle,draw, anchor=center} ]{
        |[block2]| {quasi-stability}         &   & &  &    |[block2]| {covering condition}   & &  &    & |[block2]| {existence of $T$-discrete exponential attractor}       \\
    };
     \path [>=latex,->] (m-1-1) edge (m-1-5);
      \path [>=latex,->] (m-1-5) edge (m-1-9);
      \path [>=latex,->] (m-1-9) edge (m-1-5);
\end{tikzpicture}
\end{figure}

The second figure 
illustrates the relations between classes of semigroups considered in commonly used  construction methods for exponential attractors:
\begin{figure}[!htb]
\centering
\begin{tikzpicture}
\matrix (m)[matrix of nodes, column sep=0.3cm,row sep=3mm, align=center, nodes={rectangle,draw, anchor=center} ]{
 |[block]| {squeezing property}            &   &    |[block]| {generalized squeezing property} &    &                       &          \\
 |[block]| {Ladyzhenskaya property} &  &     & |[block]| {generalized smoothing property} &     &            |[block]| {quasi-stability}   \\
       &  & |[block]| {smoothing property}&         &                                    &\\
   };
 \path [>=latex,->] (m-1-1) edge (m-1-3);
 \path [>=latex,->] (m-1-3) edge (m-2-4);
 \path [>=latex,->] (m-2-1) edge (m-1-1);
 \path [>=latex,->] (m-2-4) edge (m-2-6);
 \path [>=latex,->] (m-3-3) edge (m-2-4);
 \path [>=latex,->] (m-2-1) edge (m-3-3);
 \path [>=latex,->] (m-2-1) edge (m-1-3);
\end{tikzpicture}
\end{figure}
 
The outline of our paper is as follows.
In Section \ref{sec:EXPO} we introduce  $T$-discrete exponential attractors and prove the existence criterion which is formulated in terms of the mentioned covering condition. Section \ref{sec:QS}  is devoted to quasi-stable semigroups in complete metric spaces. We show that quasi-stability implies the covering condition and hence, the existence of a~$T$-discrete exponential attractor.  
Section \ref{sec:CE} addresses a construction of exponential attractors for semigroups in Banach spaces based on compact embeddings, and we show that these hypotheses imply quasi-stability. 
In Section \ref{sec:SP} we discuss semigroups satisfying the smoothing property and use the results of the previous sections to conclude that such semigroups are quasi-stable. 
In Section~\ref{sec:SQ} we discuss squeezing semigroups and in Section \ref{sec:Lad} semigroups of Ladyzhenskaya type  
in a Banach space setting  and show that these semigroups are both quasi-stable. If the phase space is a Hilbert space, we further improve the estimates on the fractal dimension of the exponential attractor. 
In Section~\ref{sec:C1}, we verify the covering condition in another construction of $T$-discrete exponential attractors, which requires
the existence of a global attractor, the continuous differentiability for the semigroup and a special structure of its derivatives.  
In Section~\ref{sec:EA} we compare the notion of $T$-discrete exponential attractors for semigroups defined on time interval $[0,\infty)$ with the classical notion of exponential attractors. Using the construction in Section~\ref{sec:EXPO} and 
assuming, in addition, the H\"older continuity in time of the semigroup we provide an existence result for classical exponential attractors. Finally, in Section~\ref{sec:FC} we comment on some related notions and approaches used in the extensive literature on exponential attractors.

\section{Existence criterion for T-discrete exponential attractors}\label{sec:EXPO}

In this section we formulate abstract necessary and sufficient conditions for the existence of $T$-discrete exponential attractors for a semigroup $\{S(t)\colon t\geq 0\}$ on a metric space $(V,d)$, that is, a~family of maps $S(t)\colon V\to V$, $t\geq 0$, such that $S(t)S(s)=S(t+s)$, $t,s\geq 0$, with $S(0)=I$ being the identity map on $V$. Before we introduce the notion of an exponential attractor and its weaker counterpart of a $T$-discrete exponential attractor  we recall the concept of a global attractor, see e.g. \cite{Hale88, Lad91, C-D00}. Unless specified otherwise, when we write $t\geq 0$ then either $t\in[0,\infty)$ or  $t\in \N_0=\N\cup\{0\}$.

\begin{defn}
A \emph{global attractor} for a semigroup $\{S(t)\colon t\geq 0\}$ on a metric space $(V,d)$ is a~nonempty compact set $\mathbf{A}\subseteq V$ such that
\begin{itemize}
\item[(i)] $\mathbf{A}$ is invariant under the semigroup, i.e., $S(t)\mathbf{A}=\mathbf{A}$ for all $t\geq 0,$
\item[(ii)] for every bounded subset $G$ of $V$ we have
$$\lim_{t\to\infty}\dist^{V}(S(t)G,\mathbf{A})=\lim_{t\to\infty}\sup_{x\in G}\inf_{y\in\mathbf{A}}d(S(t)x,y)=0.$$
\end{itemize}
\end{defn}

We further recall that $\mathbf{B}\subseteq V$ is an absorbing set for the semigroup $\{S(t)\colon t\geq 0\}$ if for every bounded subset $G\subseteq V$ there exists $t_G\geq 0$ such that 
$S(t)G\subseteq \mathbf{B}$ for all $t\geq t_G$.
In existence theorems for global and exponential attractors, the semigroup is typically assumed to be continuous, or closed, see e.g.~\cite{PaZe,Pra03b,CaSo}. Here we assume asymptotic closedness of the semigroup in the main theorems on the existence of global and $T$-discrete exponential attractors (Theorems~\ref{thm:existglobal} and \ref{thm:EXDEXPAT}). Note that the asymptotic closedness of the semigroup is not required to construct $T$-discrete exponential attractor if we know a priori that a global attractor exists (Corollary~\ref{cor:WITHGLOBALATTR}). The concept of asymptotic closedness was exploited, for example, in \cite{cholewarodriguezbernal10}.

\begin{defn}
A semigroup $\{S(t)\colon t\geq 0\}$ on a metric space $(V,d)$ is called \emph{asymptotically closed}
if for any $t\geq 0$, $t_{k}\geq 0$, $t_{k}\to\infty$ and any bounded sequence $x_k\in V$
the following implication holds:
$$\text{if }S(t_k)x_k\to x\text{ and }S(t+t_k)x_{k}\to y\text{ with }x,y\in V,\  
\text{ then }S(t)x=y.$$
\end{defn}

The following theorem provides an existence criterion for global attractors. Additional equivalent characterizations can be derived, but we restrict the formulation to the statements we use in the sequel for the construction of exponential attractors.
Different from similar criteria in e.g.~\cite{chueshov}, we only assume asymptotic closedness of the semigroup. 
Here and in the sequel, $\Lambda^{V}(G)$ stands for the $\omega$-limit set of a subset $G\subseteq V$, that is, 
$$\Lambda^{V}(G)=\bigcap_{s\geq 0}\cl_{V}\bigcup_{t\geq s}S(t)G,$$
where $\cl_V$ denotes the closure in $V$.

\begin{thm}\label{thm:existglobal}
Let $\{S(t)\colon t\geq 0\}$ be an asymptotically closed semigroup on a metric space $(V,d)$. Then, the following statements are equivalent:
\begin{itemize}
\item[$(1)$] There exists a global attractor $\mathbf{A}$ for the semigroup in $V$.
\item[$(2)$] There exists a nonempty bounded absorbing set $\mathbf{B}\subseteq V$ for the semigroup such that 
for every sequence $t_k\geq 0$, $t_k\to\infty,$ and $x_k\in \mathbf{B}$, the sequence $S(t_k)x_k$ possesses a convergent subsequence. In this case, $\mathbf{A}=\displaystyle\Lambda^{V}(\mathbf{B})$ is the global attractor.
\item[$(3)$] There exists a nonempty compact set $K\subseteq V$ attracting all bounded sets of $V$. In this case, $\mathbf{A}=\displaystyle\Lambda^{V}(K)$ is the global attractor.
\end{itemize}
\end{thm}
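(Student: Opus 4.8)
The plan is to prove the chain of implications $(1)\Rightarrow(3)\Rightarrow(2)\Rightarrow(1)$. The implication $(1)\Rightarrow(3)$ is immediate: a global attractor $\mathbf{A}$ is itself a nonempty compact set attracting all bounded subsets of $V$ by parts (i) and (ii) of the definition, so one may take $K=\mathbf{A}$; moreover, since $\mathbf{A}$ is invariant one checks directly that $\Lambda^V(\mathbf{A})=\mathbf{A}$, which makes the closing claim of $(3)$ consistent.

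For $(3)\Rightarrow(2)$, suppose $K$ is a nonempty compact set attracting all bounded sets. Fix $\varepsilon>0$ and let $\mathbf{B}=\{x\in V\colon \dist^V(x,K)\leq\varepsilon\}$; the attraction property of $K$ shows $\mathbf{B}$ is absorbing, and $\mathbf{B}$ is bounded since $K$ is compact hence bounded. The substantial point is the relative compactness of sequences $S(t_k)x_k$ with $x_k\in\mathbf{B}$ and $t_k\to\infty$. Here I would use the attraction of $K$: since $\mathbf{B}$ is bounded, $\dist^V(S(t_k)x_k,K)\to 0$, so one can pick $z_k\in K$ with $d(S(t_k)x_k,z_k)\to 0$; by compactness of $K$ a subsequence of $z_k$ converges, and hence so does the corresponding subsequence of $S(t_k)x_k$. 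This gives $(2)$, with the identification $\mathbf{A}=\Lambda^V(\mathbf{B})$ to be verified in the next step.

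The heart of the argument is $(2)\Rightarrow(1)$, and this is where asymptotic closedness enters; I expect this to be the main obstacle, since we have deliberately weakened the usual continuity/closedness hypothesis. Let $\mathbf{B}$ be the nonempty bounded absorbing set from $(2)$ and set $\mathbf{A}=\Lambda^V(\mathbf{B})$. \emph{Nonemptiness and compactness:} using the sequential compactness hypothesis in $(2)$, a standard diagonal argument shows $\mathbf{A}\neq\emptyset$ and that $\mathbf{A}$ coincides with the set of all limits of convergent sequences $S(t_k)x_k$ with $x_k\in\mathbf{B}$, $t_k\to\infty$; the same hypothesis yields that $\mathbf{A}$ is relatively compact, and it is closed by definition, hence compact. \emph{Attraction:} for a bounded set $G$, absorption gives $S(t)G\subseteq\mathbf{B}$ for $t\geq t_G$, and then a contradiction argument — if $\dist^V(S(t_k)G,\mathbf{A})\not\to 0$, extract $x_k\in G$ and, after absorption, a convergent subsequence of $S(t_k)x_k$ whose limit lies in $\mathbf{A}$ — establishes $\dist^V(S(t)G,\mathbf{A})\to 0$. \emph{Invariance:} this is the step genuinely requiring asymptotic closedness. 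To show $S(t)\mathbf{A}\subseteq\mathbf{A}$, take $y\in\mathbf{A}$, write $y=\lim S(t_k)x_k$ with $x_k\in\mathbf{B}$, $t_k\to\infty$; by $(2)$ a subsequence of $S(t+t_k)x_k$ converges to some $z$, and $z\in\mathbf{A}$ by construction, while asymptotic closedness forces $S(t)y=z$, so $S(t)y\in\mathbf{A}$. For the reverse inclusion $\mathbf{A}\subseteq S(t)\mathbf{A}$, take $y\in\mathbf{A}$, $y=\lim S(t_k)x_k$; applying $(2)$ to the times $t_k-t$ (valid for large $k$) yields a subsequence along which $S(t_k-t)x_k\to w\in\mathbf{A}$, and asymptotic closedness gives $S(t)w=y$. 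Combining the inclusions yields $S(t)\mathbf{A}=\mathbf{A}$, completing $(2)\Rightarrow(1)$ and closing the cycle.
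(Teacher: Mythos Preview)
Your proof is correct and takes essentially the same approach as the paper; the only difference is organizational---you close the cycle $(1)\Rightarrow(3)\Rightarrow(2)\Rightarrow(1)$ whereas the paper establishes $(1)\Leftrightarrow(2)$ and $(1)\Leftrightarrow(3)$ separately, but the substance of each implication (the $\varepsilon$-neighborhood of the compact set as absorbing set, the contradiction argument for attraction, and the use of asymptotic closedness for both inclusions in the invariance) is identical. One small omission: you do not verify the supplementary claim in $(3)$ that $\mathbf{A}=\Lambda^V(K)$ for an \emph{arbitrary} compact attracting set $K$; the paper handles this by noting that invariance of $\mathbf{A}$ forces $\mathbf{A}\subseteq K$, hence $\mathbf{A}=\Lambda^V(\mathbf{A})\subseteq\Lambda^V(K)$, while conversely any $x\in\Lambda^V(K)$ is a limit $S(t_k)x_k\to x$ with $x_k\in K$ which, after absorbing $K$ into $\mathbf{B}$, lies in $\Lambda^V(\mathbf{B})=\mathbf{A}$.
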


\begin{proof}
\textbf{Step 1:} $(2)$ implies $(1)$. We first prove that the set $\Lambda^{V}(\mathbf{B})$ is a nonempty, compact subset of $V$ that attracts all bounded
subsets of $V$. To this end, consider sequences $t_k\geq 0$, $t_k\to\infty$ and $x_k\in \mathbf{B}$.
By assumption there exists $y\in V$ and a subsequence  such that
$S(t_{k_j})x_{k_j}\to y$, and by the definition of $\Lambda^{V}(\mathbf{B})$ we have $y\in \Lambda^{V}(\mathbf{B})$.
To show compactness let now $y_k\in\Lambda^{V}(\mathbf{B})$, $k\in\N$. Then, there exist  $t_{k}\geq k$ and $x_k\in \mathbf{B}$ such that
$$d(S(t_k)x_k,y_k)<\tfrac{1}{k},\ k\in\N.$$
Consequently, there exists a subsequence $y_{k_j}$ and $y\in\Lambda^{V}(\mathbf{B})$ such that $y_{k_j}\to y$ in $V$,
which shows the compactness of $\Lambda^{V}(\mathbf{B})$ in $V$. Finally, suppose contrary to the claim that 
there exists a bounded subset $D$ of $V$ that is not attracted by $\Lambda^{V}(\mathbf{B})$.
Then there exists $\eps_0>0$, a~sequence $t_k\geq 0$, $t_k\to\infty$ and $x_k\in D$ such that
$$d(S(t_k)x_k,y)>\eps_0,\ k\in\N,\ y\in\Lambda^{V}(\mathbf{B}).$$
Let $t_{D}\geq0$ be such that $S(t_D)D\subseteq \mathbf{B}$. Then $S(t_k)x_{k}=S(t_k-t_{D})S(t_{D})x_k$ for large $k$ and $S(t_k)x_{k}$ has a 
subsequence converging to some $y_0\in\Lambda^{V}(\mathbf{B})$, which is a contradiction.

It remains to show that $\Lambda^{V}(\mathbf{B})$ is invariant. Let $t\geq 0$ and $x\in\Lambda^{V}(\mathbf{B})$. Then there exist $t_{k}\geq0$, $t_{k}\to\infty$ and $x_{k}\in \mathbf{B}$ such that $S(t_k)x_k\to x$.
By assumption there exists a~subsequence $k_{j}$ and $y\in\Lambda^{V}(\mathbf{B})$ such that
$$S(t+t_{k_j})x_{k_j}=S(t)S(t_{k_j})x_{k_{j}}\to y.$$
By the asymptotic closedness of the semigroup it follows that $S(t)x=y$ which proves that $S(t)\Lambda^{V}(\mathbf{B})\subseteq\Lambda^{V}(\mathbf{B})$.
To show the reverse inclusion let  $t\geq 0$ and $y\in\Lambda^{V}(\mathbf{B})$. Then there exist  $t_k\geq t$, $t_{k}\to\infty$
and $x_{k}\in \mathbf{B}$ such that
$S(t_k)x_{k}=S(t)S(t_k-t)x_{k}\to y.$
By assumption there exist $x\in\Lambda^{V}(\mathbf{B})$ and a subsequence $k_{j}$ such that $S(t_{k_j}-t)x_{k_j}\to x.$
Thus the asymptotic closedness implies that $S(t)x=y$ and hence, $\Lambda^{V}(\mathbf{B})\subseteq S(t)\Lambda^{V}(\mathbf{B})$.
We conclude that $\mathbf{A}=\Lambda^{V}(\mathbf{B})$ is the global attractor for the semigroup.

\textbf{Step 2:} $(1)$ implies $(2)$.
Note that any $\eps$-neighborhood $\mathbf{B}$ of $\mathbf{A}$ with $\eps>0$,
$$\mathbf{B}=\bigcup_{x\in\mathbf{A}}B^{V}(x,\eps),$$
is a nonempty bounded absorbing set. Take sequences $t_k\geq0$, $t_k\to\infty$ and $x_{k}\in \mathbf{B}$.
Since $\mathbf{A}$ attracts $\mathbf{B}$, we have
$$\dist^{V}(S(t_k)x_k,\mathbf{A})\leq\dist^{V}(S(t_k)\mathbf{B},\mathbf{A})\to 0\ \text{ as }\ k\to\infty,$$
and by the compactness of $\mathbf{A}$ there exists a convergent subsequence of $S(t_k)x_k$.

\textbf{Step 3:} $(1)$ implies $(3)$.
This is obvious since the global attractor is a compact set attracting all bounded subsets.

\textbf{Step 4:} $(3)$ implies $(1)$.
Note that any $\eps$-neighborhood $\mathbf{B}$ of $K$ with $\eps>0$ is a~bounded absorbing set
and $\dist^{V}(S(t)\mathbf{B},K)\to 0$ as $t\to\infty$. Thus for any sequences $t_{k}\geq 0$, $t_{k}\to\infty$ and $x_{k}\in \mathbf{B}$,
the sequence $S(t_{k})x_{k}$ has a~convergent subsequence since $K$ is compact. Thus, (2) holds and by the equivalence of (1) and (2) there exists a global attractor $\mathbf{A}=\Lambda^{V}(\mathbf{B})$.
By the invariance of the global attractor, $\mathbf{A}$ is contained in $K$, and consequently, $\mathbf{A}=\Lambda^{V}(\mathbf{A})\subseteq\Lambda^{V}(K)$. Conversely, if $x\in\Lambda^{V}(K)$
then there exist sequences $x_k\in K$, $t_k\geq0$, $t_{k}\to\infty$ such that $S(t_k-t_{K})S(t_{K})x_k\to x$, where $t_K\geq 0$ is such that
$S(t_{K})K\subseteq \mathbf{B}$. It follows that $x\in\Lambda^{V}(\mathbf{B})$, which shows that $\mathbf{A}=\Lambda^{V}(K)$.
\end{proof}

\begin{rem}
If a global attractor exists, it is unique which is an immediate consequence of its definition. Moreover, the global attractor is the minimal compact set that attracts all bounded sets. Similar characterizations for the existence of global attractors as in Theorem~\ref{thm:existglobal} and additional equivalent statements were established in \cite{ChCoPa}. However, different notions were used there for the asymptotic closedness of the semigroup as well as for global attractors. The definition of a global attractor in \cite{ChCoPa} was based on the minimality property. 
\end{rem}

We will use Theorem \ref{thm:existglobal} to prove our existence criterion for $T$-discrete exponential attractors. 
Here and in the sequel, given a subset $G$ of a metric space $(V,d)$ and $\eps>0$ we denote by $N^{V}(G,\eps)$ the minimal number of open $\eps$-balls in $V$ centered at points from $G$ necessary to cover the set $G$. 

\begin{defn}\label{defn:EXPAT}
An \emph{exponential attractor} for a semigroup $\{S(t)\colon t\geq 0\}$ on a metric space $(V,d)$ is a~nonempty compact set $\mathbf{M}\subseteq V$ such that
\begin{itemize}
\item[(i)] $\mathbf{M}$ is positively invariant under the semigroup, i.e., $S(t)\mathbf{M}\subseteq\mathbf{M}$ for all $t\geq 0,$
\item[(ii)] the fractal dimension of $\mathbf{M}$ in $V$ is finite with a given bound $\chi\geq 0$,
i.e.,
$$\dimf^{V}(\mathbf{M})=\limsup_{\eps\to0^{+}}\log_{\frac{1}{\eps}}N^{V}(\mathbf{M},\eps)\leq\chi<\infty,$$
\item[(iii)]  $\mathbf{M}$ is exponentially attracting, i.e., there exists $\xi>0$ such that for every bounded subset $G$ of $V$ we have
$$\lim_{t\to\infty}e^{\xi t}\dist^{V}(S(t)G,\mathbf{M})=0.$$
\end{itemize}

If we replace the positive invariance (i) of $\mathbf{M}$ by the weaker requirement of positive $T$-invariance, i.e.,
\begin{itemize}
\item[(i')] there exists  $T>0$ such that $S(T)\mathbf{M}\subseteq\mathbf{M},$
\end{itemize}
then we call $\mathbf{M}$ a \emph{$T$-discrete exponential attractor} for the semigroup $\{S(t)\colon t\geq 0\}$.
\end{defn}

Throughout the paper, we denote $T$-discrete exponential attractors by $\mathbf{M_0}$ and exponential attractors in the classical sense by $\mathbf{M}$.

For time discrete semigroups, i.e., when $t\in\mathbb{N}_0$,  an exponential attractor in the classical sense exists if and only if a $T$-discrete exponential attractor exists, cf.~Theorem~\ref{thm:SMALLEXPATTR}.
For semigroups with time $t\in[0,\infty)$ $T$-discrete exponential attractors satisfy all properties of an exponential attractor except for the positive invariance. In fact, they are only positively invariant with respect to discrete times $kT>0, k\in\mathbb{N}$. However, the time step $T$ can be chosen arbitrarily small, as shown in Theorem \ref{thm:SMALLEXPATTR}.  
Moreover, if a $T$-discrete exponential attractor $\mathbf{M_0}$ exists, the global attractor exists and is contained in $\mathbf{M_0}$. Of course, this latter statement also applies to exponential attractors in the classical sense.

Our criterion for the existence of a $T$-discrete exponential attractor for a semigroup on a~complete metric space is based on a covering condition, similarly as in \cite[Theorem 2.1]{Pra03b}. This theorem was already announced in \cite{CzKa}.

\begin{thm}\label{thm:EXDEXPAT}
Let $\{S(t)\colon t\geq 0\}$ be an asymptotically closed semigroup on a complete metric space $(V,d)$ and let $T>0$.
Then, the following statements are equivalent:
\begin{itemize}
\item[$(1)$] There exists a $T$-discrete exponential attractor $\mathbf{M_0}$ in $V$ for the semigroup.
\item[$(2)$] There exists a nonempty bounded 
absorbing set $\mathbf{B}\subseteq V$ for the semigroup
such that the covering condition 
\begin{equation}\label{e:CONDBALLSSEMB}
N^{V}(S(kT)\mathbf{B},aq^{k})\leq b h^{k},\ k\in\N,\ k\geq k_0,
\end{equation}
holds for some $k_0\in\N$, $a,b>0$, $q\in(0,1)$ and $h\geq 1$. 
\end{itemize}
Moreover, if the covering condition \eqref{e:CONDBALLSSEMB} holds, then
$$\mathbf{M_0}=\mathbf{A}\cup\mathbf{E_0}=\cl_{V}\mathbf{E_0}\subseteq \mathbf{B},$$
is a $T$-discrete exponential attractor with rate of attraction
$\xi\in(0,\frac{1}{T}\ln{\frac{1}{q}})$, and its fractal dimension is bounded by
\begin{equation}\label{e:ESTDIMM3}
\dimf^{V}(\mathbf{M_0})\leq\log_{\frac{1}{q}}h.
\end{equation}
Here, $\mathbf{E_0}$ is a certain countable subset of $\mathbf{B}$ and $\mathbf{A}=\Lambda^{V}(\cl_{V}\mathbf{E_0})$ is the global attractor for the semigroup.
\end{thm}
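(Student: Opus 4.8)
The plan is to establish the equivalence by proving two implications and then reading off the quantitative claims from the construction used for $(2)\Rightarrow(1)$. The easier direction is $(1)\Rightarrow(2)$: given a $T$-discrete exponential attractor $\mathbf{M_0}$, I would take $\mathbf{B}$ to be a fixed $\varepsilon$-neighborhood of $\mathbf{M_0}$ (which is a bounded absorbing set because $\mathbf{M_0}$ exponentially attracts all bounded sets, in particular itself and any bounded absorbing-candidate set). Since $\mathbf{M_0}$ has finite fractal dimension $\leq\chi$, for small $\rho$ it is coverable by at most $C\rho^{-\chi'}$ balls of radius $\rho$ for any $\chi'>\chi$; and since $\dist^V(S(kT)\mathbf{B},\mathbf{M_0})\leq C'e^{-\xi kT}$, the set $S(kT)\mathbf{B}$ lies in a $C'e^{-\xi kT}$-neighborhood of $\mathbf{M_0}$, so it is coverable by the $\mathbf{M_0}$-covering balls enlarged by that amount. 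Choosing $q$ slightly larger than $e^{-\xi T}$ and $\rho\sim e^{-\xi kT}$, a routine bookkeeping turns these two covering estimates into \eqref{e:CONDBALLSSEMB} with suitable $a,b,h$; here $h$ comes out as roughly $q^{-\chi}$, consistent with \eqref{e:ESTDIMM3}.

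The substantive direction is $(2)\Rightarrow(1)$, and this is where the actual construction lives. First I would use the covering condition together with Theorem~\ref{thm:existglobal} to get the global attractor: from $N^V(S(k_0 T)\mathbf{B},aq^{k_0})\leq bh^{k_0}<\infty$ and iterating, the sets $S(kT)\mathbf{B}$ become totally bounded with vanishing radius, hence (using completeness of $V$) any sequence $S(t_k)x_k$ with $x_k\in\mathbf{B}$, $t_k\to\infty$, has a convergent subsequence; thus statement (2) of Theorem~\ref{thm:existglobal} holds and $\mathbf{A}=\Lambda^V(\mathbf{B})$ exists. Next comes the core Pra\v{z}\'ak-type construction of the countable set $\mathbf{E_0}$. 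For each $k\geq k_0$ pick a finite set $E_k\subseteq S(kT)\mathbf{B}$ of centers of the $\leq bh^k$ balls of radius $aq^k$ covering $S(kT)\mathbf{B}$, then pull these centers back to points of $\mathbf{B}$ and propagate them forward by the semigroup on the grid $\{jT\}$; more precisely one builds a tree/telescoping family so that every orbit point $S(kT)x$ with $x\in\mathbf{B}$ is within $aq^k$ of a node, and one closes the family under application of $S(T)$ by absorbing the finitely many new points at each level. Setting $\mathbf{E_0}$ to be the union over all levels of these (countably many) points, one takes $\mathbf{M_0}=\cl_V\mathbf{E_0}$; by construction $S(T)\mathbf{E_0}\subseteq\mathbf{E_0}$ up to finitely many points already included, giving positive $T$-invariance of $\mathbf{M_0}$.

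The three defining properties of a $T$-discrete exponential attractor are then checked in turn. Compactness of $\mathbf{M_0}=\cl_V\mathbf{E_0}$ follows because $\mathbf{E_0}\subseteq\mathbf{B}$ and, at each dyadic level, $\mathbf{E_0}$ is within $aq^k$ of the finite set $E_k$-preimages, so $\mathbf{E_0}$ is totally bounded; completeness of $V$ then gives compactness. The dimension bound \eqref{e:ESTDIMM3}: counting the level-$k$ pieces one finds $N^V(\mathbf{M_0},aq^k)\lesssim bh^k$ plus lower-level contributions, and taking $\varepsilon=aq^k$ and passing to the $\limsup$ in $\log_{1/\varepsilon}N^V$ yields $\dimf^V(\mathbf{M_0})\leq\log_{1/q}h$ (one interpolates $\varepsilon\in(aq^{k+1},aq^k]$ to handle all small $\varepsilon$). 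Exponential attraction: for a bounded set $G$, after $S(t_G)G\subseteq\mathbf{B}$, every point $S(kT+t_G)x$ lies within $aq^k$ of $\mathbf{E_0}\subseteq\mathbf{M_0}$; choosing any $\xi<\frac1T\ln\frac1q$ absorbs the discrete-to-continuous gap by the continuity (here: local boundedness of orbits over $[0,T]$, which is all asymptotic closedness plus the absorbing structure affords) so that $e^{\xi t}\dist^V(S(t)G,\mathbf{M_0})\to0$. Finally $\mathbf{A}\subseteq\mathbf{M_0}$ because $\mathbf{A}=\Lambda^V(\mathbf{B})$ and $\mathbf{M_0}$ is closed and attracts $\mathbf{B}$, and $\mathbf{A}=\Lambda^V(\cl_V\mathbf{E_0})$ since $\mathbf{E_0}$ is itself an absorbing-reachable subset of $\mathbf{B}$ that still attracts — giving $\mathbf{M_0}=\mathbf{A}\cup\mathbf{E_0}$. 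I expect the main obstacle to be the careful bookkeeping in the construction of $\mathbf{E_0}$: one must simultaneously (a) keep it countable, (b) keep it inside $\mathbf{B}$, (c) ensure $S(T)$-invariance of its closure, and (d) retain the level-wise covering counts that feed the dimension estimate — these four requirements interact, and the standard fix is to add, at each level, only the finitely many forward images $S(jT)$ of the chosen covering centers for $j$ up to that level, which requires a somewhat delicate induction.
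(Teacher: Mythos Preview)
Your plan is essentially the same as the paper's: the Pra\v{z}\'ak-type construction $\mathbf{E_0}=\bigcup_{k\geq k_0}Q_k$ with $Q_k=W_k\cup S(T)Q_{k-1}$ (which is exactly your ``add, at each level, the finitely many forward images $S(jT)$ of the chosen covering centers''), then $\mathbf{M_0}=\cl_V\mathbf{E_0}$. The direction $(1)\Rightarrow(2)$ and the dimension count are as you describe.

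There is one genuine gap in your bridging from discrete to continuous time. You write that ``local boundedness of orbits over $[0,T]$, which is all asymptotic closedness plus the absorbing structure affords'' handles the passage from $\dist^V(S(kT)\mathbf{B},\mathbf{E_0})\leq aq^k$ to $e^{\xi t}\dist^V(S(t)G,\mathbf{M_0})\to 0$. Asymptotic closedness gives you nothing of the sort: it is a sequential closure property at infinity and says nothing about boundedness of $S(s)x$ for $s\in[0,T]$. The correct mechanism, which the paper invokes via Remark~\ref{rem:POSINV}, is to assume without loss of generality that $\mathbf{B}$ is \emph{positively invariant} (replace an absorbing $B_0$ by $\mathbf{B}=\bigcup_{t\geq t_{B_0}}S(t)B_0\subseteq B_0$). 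Then for $t=kT+t_0$ with $t_0\in[0,T)$ one has $S(t)\mathbf{B}=S(kT)S(t_0)\mathbf{B}\subseteq S(kT)\mathbf{B}$, and the discrete covering estimate transfers directly. The same positive-invariance reduction is also what makes your earlier step ``any sequence $S(t_k)x_k$ with $x_k\in\mathbf{B}$, $t_k\to\infty$ has a convergent subsequence'' work for arbitrary $t_k$ rather than just $t_k\in T\mathbb{N}$.

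Two minor points. First, the ``pull back to $\mathbf{B}$'' step you mention is unnecessary: the centers $W_k$ already lie in $S(kT)\mathbf{B}\subseteq\mathbf{B}$, and one only pushes forward by $S(T)$. Second, the paper derives the existence of the global attractor \emph{after} building $\mathbf{M_0}=\cl_V\mathbf{E_0}$ (it is compact and attracting, so Theorem~\ref{thm:existglobal}(3) applies), rather than before; your order also works, but again only once positive invariance of $\mathbf{B}$ is in place. The identity $\cl_V\mathbf{E_0}=\mathbf{A}\cup\mathbf{E_0}$ requires a short argument (limit points of $\mathbf{E_0}$ coming from levels $k_l\to\infty$ are absorbed by $\mathbf{A}$ since $Q_{k_l}\subseteq S(k_lT)\mathbf{B}$), which you should make explicit.
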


\begin{rem}\label{rem:POSINV}
Without loss of generality, in Theorem~\ref{thm:EXDEXPAT} we can assume that the absorbing set $\mathbf{B}$ is \emph{positively invariant}.
Indeed, if $B_0$ is a~bounded absorbing set that satisfies the covering condition \eqref{e:CONDBALLSSEMB} 
and $S(t)B_0\subseteq B_0$ for all $t\geq t_{B_0}$, $t_{B_0}\geq0$ being the absorbing time for $B_0$, then the positively invariant subset
\begin{equation*}
\mathbf{B}=\bigcup_{t\geq t_{B_0}}S(t)B_0\subseteq B_0
\end{equation*}
is a bounded absorbing set that satisfies \eqref{e:CONDBALLSSEMB} with a possibly  larger constant $a>0$.
\end{rem}

\begin{proof}[Proof of Theorem~\ref{thm:EXDEXPAT}]
\textbf{Step 1.}
We show that the existence of a~$T$-discrete exponential attractor 
$\mathbf{M_0}$ implies condition (2).  
Note that given $\eps_0>0$, the $\eps_0$-neighborhood $\mathbf{B}$ of $\mathbf{M_0}$,
$$\mathbf{B}=\bigcup_{x\in\mathbf{M_0}}B^{V}(x,\eps_0),$$
is a nonempty bounded absorbing set, since $\mathbf{M_0}$ attracts every bounded subset of $V$.
Moreover, due to the exponential rate of attraction for some $\xi>0$
there exists $s_{\mathbf{B}}\geq 0$ such that
\begin{equation*}
S(t)\mathbf{B}\subseteq\bigcup_{x\in\mathbf{M_0}}B^{V}(x,e^{-\xi t}),\ t\geq s_{\mathbf{B}}.
\end{equation*}
Since $\dimf^{V}(\mathbf{M_0})<\chi_0$ for some $\chi_0>0$, it follows that 
$N^{V}(\mathbf{M_0},\eps)<e^{-\chi_0\ln\eps}$ for all sufficiently small $\eps>0$.
Thus, setting $q=e^{-\xi T}\in(0,1)$, we find $k_0\in\N$ such that for $k\geq k_0$ we have 
$$N^{V}(\mathbf{M_0},q^{k})< e^{\chi_{0}\xi Tk}\quad \text{and}\quad S(kT)\mathbf{B}\subseteq\bigcup_{x\in\mathbf{M_0}}B^{V}(x,q^{k}).$$
Consequently, we obtain
$$N^{V}(S(kT)\mathbf{B},4q^{k})\leq N^{V}(\mathbf{M_0},q^{k})< e^{\chi_{0}\xi Tk},\ k\geq k_0,$$
which shows that the covering condition \eqref{e:CONDBALLSSEMB} holds with  $h=e^{\chi_{0}\xi T}$,  $a=4$ and $b=1$.

\textbf{Step 2.} 
We show the reverse statement in several steps. Assume that (2) holds with $\mathbf{B}$ being positively invariant which we can assume  due to Remark~\ref{rem:POSINV}.  We first prove that there exists a~countable subset $\mathbf{E_0}$ of $\mathbf{B}$ that 
is precompact in $V$ and such that $S(T)\mathbf{E_0}\subseteq \mathbf{E_0}$,
\begin{itemize}
\item[($e_1$)] $\displaystyle\mathbf{E_0}=\bigcup_{k\geq k_0}Q_{k}$,
where $Q_{k}\subseteq S(kT)\mathbf{B}$ is finite with $\displaystyle\# Q_{k}\leq b\sum_{l=0}^{k-k_0}h^{k-l}$, and
\begin{equation*}
\dimf^{V}(\mathbf{E_0})\leq\log_{\frac{1}{q}}h,
\end{equation*}
\item[($e_2$)] for any $\xi\in(0,\xi_{T})$, where $\xi_T=\frac{1}{T}\ln\frac{1}{q}>0$,
and any bounded subset $G$ of $V$ we have
\begin{equation*}
\lim_{t\rightarrow\infty}e^{\xi t}\dist^{V}(S(t)G,\mathbf{E_0})=0.
\end{equation*}
\end{itemize}
To this end, let $W_{k}$, $k\geq k_0$, be the centers of the balls from the coverings in \eqref{e:CONDBALLSSEMB}, so that
\begin{equation}\label{e:sets_w}
W_{k}\subseteq S(kT)\mathbf{B}\subseteq\mathbf{B},\quad 
\# W_{k}\leq b\displaystyle h^{k}, \quad 
\displaystyle S(kT)\mathbf{B}\subseteq\bigcup_{x\in W_{k}}B^{V}(x, a q^k).
\end{equation}
We now set $Q_{k_0}=W_{k_0}$ and define the sets $Q_k$ recursively by
$$Q_{k}=W_{k}\cup S(T)Q_{k-1},\ k>k_0.$$
Then using \eqref{e:sets_w} it follows that for $k\geq k_0$ the sets $Q_{k}$ satisfy
\begin{itemize}
\item[($q_1$)] $S(T)Q_{k}\subseteq Q_{k+1}$, $\quad Q_{k}\subseteq S(kT)\mathbf{B}\subseteq\mathbf{B}$,
\item[($q_2$)] $\displaystyle Q_{k}=\bigcup_{l=0}^{k-k_0}S(lT)W_{k-l}$,
$\quad\displaystyle\# Q_{k}\leq b\sum_{l=0}^{k-k_0}h^{k-l}$.
\end{itemize}
Indeed, the first statement in ($q_1$) follows from the definition of $Q_{k}$
and the second one by induction and \eqref{e:sets_w}, since
$$Q_{k+1}=W_{k+1}\cup S(T)Q_{k}\subseteq S((k+1)T)\mathbf{B}\cup S(T)S(kT)\mathbf{B}=S((k+1)T)\mathbf{B}\subseteq \mathbf{B}.$$
The first statement in ($q_2$) follows by induction, since
\begin{equation*}
\begin{split}
Q_{k+1}&=W_{k+1}\cup S(T)Q_{k}=W_{k+1}\cup\bigcup_{l=0}^{k-k_0}S((l+1)T)W_{k-l}\\
&=W_{k+1}\cup\bigcup_{m=1}^{k+1-k_0}S(mT)W_{k+1-m}=\bigcup_{l=0}^{k+1-k_0}S(lT)W_{k+1-l},
\end{split}
\end{equation*}
and the second statement in ($q_2$) then follows from \eqref{e:sets_w}.

We now define
$$\mathbf{E_0}=\bigcup_{k\geq k_0}Q_{k}$$
and observe that
$$\mathbf{E_0}=\bigcup_{k=k_0}^{\infty}\bigcup_{l=0}^{k-k_0}S(lT)W_{k-l}
=\bigcup_{l=0}^{\infty}\bigcup_{m=k_0}^{\infty}S(lT)W_{m}.$$
The set $\mathbf{E_0}$ is a~nonempty subset of $\mathbf{B}$ and by ($q_1$) we have
$$S(T)\mathbf{E_0}=\bigcup_{k\geq k_0}S(T)Q_{k}\subseteq\bigcup_{k\geq k_0}Q_{k+1}\subseteq\mathbf{E_0}.$$
Moreover,  ($q_1$) implies that for any $l\geq k\geq k_0$ we have
$$Q_{l}\subseteq S(lT)\mathbf{B}=S(kT)S((l-k)T)\mathbf{B}\subseteq S(kT)\mathbf{B}.$$
Consequently, for all $k\geq k_0$ we obtain
$$\mathbf{E_0}=\bigcup_{l=k_0}^{k}Q_{l}\cup\bigcup_{l=k+1}^{\infty}Q_{l}
\subseteq\bigcup_{l=k_0}^{k}Q_{l}\cup S(kT)\mathbf{B},$$
and using \eqref{e:sets_w} and ($q_2$) we conclude that for $k\geq k_0$
\begin{align}\label{eq:proofE0}
N^{V}(\mathbf{E_0},aq^{k})\leq\#\left(\bigcup_{l=k_0}^{k}Q_{l}\right)+\# W_{k}\leq
b\sum_{l=k_0}^{k}\sum_{m=0}^{l-k_0}h^{l-m}+bh^{k}\leq 2b(k-k_0+1)^{2}h^{k}.
\end{align}
Consider any sequence $\eps_{n}>0$, $n\in\N$, converging to $0$ and choose integers $k_n\in\N$ such that
$$k_n\geq k_0\ \text{ and }\ a q^{k_{n}}\leq\eps_n<a q^{k_n-1}<1\ \text{ for large}\ n.$$
Since $N^{V}(\mathbf{E_0},\eps_n)\leq N^{V}(\mathbf{E_0},a q^{k_{n}})$ and $k_n\to\infty$, it follows from \eqref{eq:proofE0} that
$$\log_{\frac{1}{\eps_n}}N^{V}(\mathbf{E_0},\eps_n)\leq\frac{\ln(2b)+2\ln(k_n)+k_{n}\ln{h}}{-\ln{a}-(k_{n}-1)\ln{q}},$$
which shows that  $\mathbf{E_0}$ is precompact in $V$ and that ($e_1$) holds.

By \eqref{e:sets_w} we have for $k\geq k_0$
$$\dist^{V}(S(kT)\mathbf{B},\mathbf{E_0})\leq\dist^{V}(S(kT)\mathbf{B},W_{k})\leq a q^{k}.$$
Moreover, for a fixed $0<\xi<\xi_T=\frac{1}{T}\ln{\frac{1}{q}}$ we have
$$e^{\xi Tk}a q^{k}=a e^{(\xi T+\ln{q})k}\to 0\ \text{ as }k\to\infty,$$
which yields
$$e^{\xi Tk}\dist^{V}(S(kT)\mathbf{B},\mathbf{E_0})\to 0\ \text{ as }k\to\infty.$$
For fixed $0<\xi<\xi_T$ and  $\eps>0$ let $k_\eps=k_{\eps}(\xi,\eps)\in\N$ be such that
$$e^{\xi T}e^{\xi T k}\dist^{V}(S(kT)\mathbf{B},\mathbf{E_0})<\eps\ \text{ for}\ k\geq k_\eps.$$
Set $t_\eps=k_\eps T$ and let $t\geq t_\eps$. Then $t=kT+t_0$ for some $k\geq k_\eps$, $k\in\N$,
and $t_0\in[0,T)$, and by the positive invariance of $\mathbf{B}$ we conclude that 
$$e^{\xi t}\dist^{V}(S(t)\mathbf{B},\mathbf{E_0})=e^{\xi t}\dist^{V}(S(kT)S(t_0)\mathbf{B},\mathbf{E_0})
\leq e^{\xi T}e^{\xi T k}\dist^{V}(S(kT)\mathbf{B},\mathbf{E_0})<\eps.$$
It remains to show that the set $\mathbf{E_0}$ is exponentially attracting.
Let $G\subseteq V$ be bounded and $t_{G}\geq 0$ such that
$S(t_G)G\subseteq \mathbf{B}$. We fix $0<\xi<\xi_T$ and $\eps>0$ and find as above $t_\eps\geq 0$ such that
$$e^{\xi t_{G}}e^{\xi t}\dist^{V}(S(t)\mathbf{B},\mathbf{E_0})<\eps,\ t\geq t_\eps.$$
Then for $t\geq t_{G}+t_\eps$ we have
\begin{align*}
e^{\xi t}\dist^{V}(S(t)G,\mathbf{E_0})&=e^{\xi t_{G}}e^{\xi (t-t_{G})}\dist^{V}(S(t-t_{G})S(t_{G})G,\mathbf{E_0})\\
&\leq e^{\xi t_{G}}e^{\xi(t-t_{G})}\dist^{V}(S(t-t_{G})\mathbf{B},\mathbf{E_0})<\eps,
\end{align*}
which shows ($e_2$).

\textbf{Step 3.}
We now define the $T$-discrete exponential attractor as $\mathbf{M_0}=\cl_{V}\mathbf{E_0}$.
Note that $\mathbf{M_0}$ is nonempty and compact, since the space $V$ is complete, and $\mathbf{M_0}$ attracts all bounded subsets of $V$ at an exponential rate $\xi\in(0,\frac{1}{T}\ln \frac{1}{q})$ by  ($e_2$).
Hence, Theorem \ref{thm:existglobal} implies that the global attractor exists, $\mathbf{A}=\Lambda^{V}(\mathbf{M_0})$ and by the minimality of the global attractor, $\mathbf{A}\subseteq \mathbf{M_0}$.  Moreover, $\mathbf{A}\subseteq\mathbf{B}$ as 
$\mathbf{B}$ is an absorbing set, and since $\mathbf{A}$ is invariant, it follows that
$$\mathbf{A}=S(kT)\mathbf{A}\subseteq S(kT)\mathbf{B},\ k\in\N.$$
Together with \eqref{e:CONDBALLSSEMB} this implies that
$$N^{V}(\mathbf{A}, 2a q^k)\leq bh^k,\ k\geq k_0,$$
and consequently, 
\begin{equation}\label{e:ESTGLOBSEM}
\dimf^{V}(\mathbf{A})\leq\log_{\frac{1}{q}}h.
\end{equation}
It remains to show that 
\begin{align}\label{eq:proofEAstr}
\mathbf{M_0}=\mathbf{A}\cup\mathbf{E_0}.
\end{align}
Indeed, then  $\mathbf{M_0}\subseteq \mathbf{B}$ since $\mathbf{E_0}\subseteq \mathbf{B}$ and $\mathbf{A}\subseteq \mathbf{B}$. 
Moreover, we have
$$S(kT)\mathbf{M_0}=S(kT)\mathbf{A}\cup S(kT)\mathbf{E_0}\subseteq \mathbf{A}\cup \mathbf{E_0}=\mathbf{M_0},\ k\in\N,$$
and  by ($e_1$) and \eqref{e:ESTGLOBSEM} the fractal dimension of $\mathbf{M_0}$ is bounded by 
\begin{equation*}
\dimf^{V}(\mathbf{M_0})=\max\{\dimf^{V}(\mathbf{A}),\dimf^{V}(\mathbf{E_0})\}\leq\log_{\frac{1}{q}}h.
\end{equation*}

To prove \eqref{eq:proofEAstr} we first observe that $\mathbf{A}\cup\mathbf{E_0}\subseteq \cl_{V}\mathbf{E_0}=\mathbf{M_0}$, as $\mathbf{A}\subseteq\mathbf{M_0}$. To show the reverse inclusion
let $x\in\cl_{V}\mathbf{E_0}$. Then there exists a sequence $x_l\in\mathbf{E_0}$, $l\in\N$, such that $x_{l}\to x$. 
Moreover, for every $l\in\N$ there exists $k_l\in\N$ such that $k_{l}\geq k_0$ and $x_l\in Q_{k_l}$.
If $p=\sup\{k_l\colon l\in\N\}<\infty$, the sequence $x_l$ is contained in the finite set
$\bigcup_{k=k_0}^{p}Q_k$ and consequently, $x\in\bigcup_{k=k_0}^{p}Q_k\subseteq\mathbf{E_0}$.
Otherwise, if $\sup\{k_l\colon l\in\N\}=\infty$, there exists a~subsequence $k_{l_j}$ such that $k_{l_{j}}\to\infty$.
Since $x_{l_j}\in Q_{k_{l_{j}}}\subseteq S(k_{l_j}T)\mathbf{B}$, the global attractor $\mathbf{A}$ attracts $\mathbf{B}$ and $\mathbf{A}$ is compact, we conclude that $x\in\mathbf{A}$. It follows that $x\in\mathbf{A}\cup\mathbf{E_0}$.
\end{proof}

\begin{rem}\label{rem:closed}
Assume that the condition (2) in Theorem~\ref{thm:EXDEXPAT} holds and $S(T)$ is a closed map on $\cl_{V}\mathbf{B}$, that is,
for any sequence $x_k\in\cl_{V}\mathbf{B}$ the following implication holds:
$$\text{if }x_k\to x\text{ and }S(T)x_k\to y\text{ with }x,y\in\cl_{V}\mathbf{B},\text{ then }S(T)x=y.$$   
Then the asymptotic closedness of the semigroup is not required to prove statement (1). Moreover, the $T$-discrete exponential attractor $\mathbf{M_0}=\cl_{V}\mathbf{E_0}$ 
is a subset of $\cl_{V}\mathbf{B}$, with rate of attraction $\xi\in(0,\frac{1}{T}\ln\frac{1}{q})$, and
its fractal dimension is bounded as in~\eqref{e:ESTDIMM3}, where $\mathbf{E_0}$ is a~certain countable subset of $\cl_{V}\mathbf{B}$.
Indeed, this follows from Step 2 in the proof of Theorem~\ref{thm:EXDEXPAT} and the $T$-positive invariance of  $\mathbf{M_0}=\cl_{V}\mathbf{E_0}$ which holds due to the closedness of $S(T)$ on $\cl_{V}\mathbf{B}$. 
\end{rem}

Note that the asymptotic closedness of the semigroup is assumed in Theorem~\ref{thm:existglobal} to conclude the existence of a global attractor. If we know in advance that the semigroup possesses a global attractor, neither the asymptotic closedness of the semigroup nor the completeness of the metric space is required to prove statement (1) in Theorem~\ref{thm:EXDEXPAT}.

\begin{cor}\label{cor:WITHGLOBALATTR}
Let $\{S(t)\colon t\geq 0\}$ be a semigroup on a metric space $(V,d)$, which possesses a~global attractor $\mathbf{A}$ in $V$.
If the covering condition \eqref{e:CONDBALLSSEMB} holds for a nonempty bounded 
absorbing set $\mathbf{B}\subseteq V$, then there exists a $T$-discrete exponential attractor $\mathbf{M_0}=\mathbf{A}\cup\mathbf{E_0}=\cl_{V}\mathbf{E_0}\subseteq\mathbf{B}$ in $V$, with rate of attraction $\xi\in(0,\frac{1}{T}\ln\frac{1}{q})$, and its fractal dimension is bounded as in \eqref{e:ESTDIMM3}, where $\mathbf{E_0}$ is some countable subset of $\mathbf{B}$.
\end{cor}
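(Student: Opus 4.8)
The plan is to reuse Step 2 and Step 3 of the proof of Theorem~\ref{thm:EXDEXPAT} essentially verbatim, observing that the only places where asymptotic closedness and completeness of $V$ entered were (a) the invocation of Theorem~\ref{thm:existglobal} to produce the global attractor, and (b) the completeness of $V$ to pass from precompactness of $\mathbf{E_0}$ to compactness of $\mathbf{M_0}=\cl_V\mathbf{E_0}$. Since we now \emph{assume} a global attractor $\mathbf{A}$ exists, issue (a) disappears; and since $\mathbf{A}$ is compact, issue (b) can be circumvented as explained below.

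First I would invoke Remark~\ref{rem:POSINV} to assume without loss of generality that $\mathbf{B}$ is positively invariant, then run the construction of the finite sets $W_k\subseteq S(kT)\mathbf{B}$ and the recursively defined sets $Q_k = W_k\cup S(T)Q_{k-1}$, obtaining properties $(q_1)$, $(q_2)$ and hence $(e_1)$ for $\mathbf{E_0}=\bigcup_{k\geq k_0}Q_k$: namely $S(T)\mathbf{E_0}\subseteq\mathbf{E_0}$, the covering estimate \eqref{eq:proofE0}, and $\dimf^V(\mathbf{E_0})\leq\log_{1/q}h$ together with precompactness of $\mathbf{E_0}$ in $V$. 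None of this used asymptotic closedness or completeness. Likewise, property $(e_2)$ — the exponential attraction of $\mathbf{E_0}$ with any rate $\xi\in(0,\tfrac1T\ln\tfrac1q)$ — follows exactly as in Step 2 from $\dist^V(S(kT)\mathbf{B},\mathbf{E_0})\leq aq^k$ and the positive invariance of $\mathbf{B}$.

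Next I would set $\mathbf{M_0}=\mathbf{A}\cup\mathbf{E_0}$ and verify it is the desired object. Compactness: since $\mathbf{E_0}$ is precompact, $\cl_V\mathbf{E_0}$ is precompact, and because $\mathbf{E_0}$ attracts $\mathbf{B}$ and $\mathbf{A}$ is a compact attractor, the argument at the end of Step 3 shows every limit point of a sequence in $\mathbf{E_0}$ lies in $\mathbf{A}\cup\mathbf{E_0}$; combined with $\mathbf{A}$ being closed this gives $\cl_V\mathbf{E_0}=\mathbf{A}\cup\mathbf{E_0}=\mathbf{M_0}$, which is therefore compact (a closed subset of a precompact set, and itself equal to a finite union of such) — note that here compactness of $\mathbf{M_0}$ comes for free from $\mathbf{A}$ being compact plus $\mathbf{E_0}$ precompact, so the completeness of $V$ is not needed. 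Positive $T$-invariance: $S(kT)\mathbf{M_0}=S(kT)\mathbf{A}\cup S(kT)\mathbf{E_0}\subseteq\mathbf{A}\cup\mathbf{E_0}=\mathbf{M_0}$ using the invariance of $\mathbf{A}$ and $S(T)\mathbf{E_0}\subseteq\mathbf{E_0}$. Exponential attraction of $\mathbf{M_0}$ is immediate from $(e_2)$ since $\mathbf{E_0}\subseteq\mathbf{M_0}$. Finally the dimension bound: since $\mathbf{A}=S(kT)\mathbf{A}\subseteq S(kT)\mathbf{B}$, the covering condition \eqref{e:CONDBALLSSEMB} gives $N^V(\mathbf{A},2aq^k)\leq bh^k$ and hence $\dimf^V(\mathbf{A})\leq\log_{1/q}h$, so $\dimf^V(\mathbf{M_0})=\max\{\dimf^V(\mathbf{A}),\dimf^V(\mathbf{E_0})\}\leq\log_{1/q}h$, which is \eqref{e:ESTDIMM3}. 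And $\mathbf{M_0}\subseteq\mathbf{B}$ since both $\mathbf{A}\subseteq\mathbf{B}$ and $\mathbf{E_0}\subseteq\mathbf{B}$.

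The only genuinely nontrivial point — the ``main obstacle'' — is the replacement of the completeness argument: in Theorem~\ref{thm:EXDEXPAT} completeness was used to guarantee that the closure of the precompact set $\mathbf{E_0}$ is compact, whereas here I must instead extract compactness from the identity $\cl_V\mathbf{E_0}=\mathbf{A}\cup\mathbf{E_0}$, and for that the proof of $\eqref{eq:proofEAstr}$ must be checked to see that it genuinely uses only the existence, invariance and compactness of $\mathbf{A}$ (plus $\mathbf{A}$ attracts $\mathbf{B}$) and not any closedness of the semigroup. Inspecting that argument, the limiting sequence $x_{l_j}\in Q_{k_{l_j}}\subseteq S(k_{l_j}T)\mathbf{B}$ with $k_{l_j}\to\infty$ converges into $\mathbf{A}$ purely by attraction and compactness of $\mathbf{A}$ — no closedness of $S(T)$ is invoked — so the argument transfers intact. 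Everything else is a transcription of Steps 2–3, so the corollary follows.
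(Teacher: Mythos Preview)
Your approach is essentially the paper's: construct $\mathbf{E_0}$ via Step~2 of Theorem~\ref{thm:EXDEXPAT}, set $\mathbf{M_0}=\mathbf{A}\cup\mathbf{E_0}$, and verify the properties using only invariance and compactness of the assumed global attractor $\mathbf{A}$ in place of asymptotic closedness and completeness. The paper's proof does exactly this.

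One point deserves tightening. You write that compactness of $\mathbf{M_0}$ ``comes for free from $\mathbf{A}$ being compact plus $\mathbf{E_0}$ precompact'' and that $\mathbf{M_0}$ is ``a closed subset of a precompact set''. In an incomplete metric space a closed precompact set need not be compact, so this phrasing is not a valid justification. The paper instead argues \emph{sequential compactness of $\mathbf{A}\cup\mathbf{E_0}$ directly}: a sequence in $\mathbf{M_0}$ either has a subsequence in $\mathbf{A}$ (done by compactness of $\mathbf{A}$), or a subsequence $x_{l_j}\in Q_{k_{l_j}}\subseteq\mathbf{E_0}$; if $\sup k_{l_j}<\infty$ it lies in a finite set, otherwise $k_{l_j}\to\infty$ and attraction plus compactness of $\mathbf{A}$ yield a subsequence converging into $\mathbf{A}$. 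You actually describe this very argument in your final paragraph, so the content is there --- but it is this dichotomy that gives compactness, not the abstract ``closed $+$ precompact'' claim. Once sequential compactness is established, the identity $\cl_V\mathbf{E_0}=\mathbf{A}\cup\mathbf{E_0}$ follows as in Step~3 (and the inclusion $\mathbf{A}\subseteq\cl_V\mathbf{E_0}$ is obtained from $\mathbf{A}\subseteq S(kT)\mathbf{B}$ and $\dist^V(S(kT)\mathbf{B},\mathbf{E_0})\leq aq^k$, without invoking minimality against a not-yet-compact set).
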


\begin{proof}
Having constructed $\mathbf{E_0}$ as in Step 2 in the proof of Theorem~\ref{thm:EXDEXPAT}, we define $\mathbf{M_0}=\mathbf{A}\cup\mathbf{E_0}$.
We easily see that $S(T)\mathbf{M_0}\subseteq\mathbf{M_0}$, $\mathbf{M_0}$ attracts all bounded subsets of $V$ at an exponential rate $\xi\in(0,\frac{1}{T}\ln\frac{1}{q})$ and its fractal dimension is bounded as in \eqref{e:ESTDIMM3}. To justify its compactness in $V$, we note that a sequence $x_l\in\mathbf{M_0}$ either contains a~subsequence in $\mathbf{A}$, which in turn has a~subsequence converging to an element of $\mathbf{A}\subseteq\mathbf{M_0}$, or it contains a~subsequence in $\mathbf{E_0}$. In the latter case, either it is contained in a finite set and hence has a~convergent subsequence to an element of $\mathbf{E_0}$, or it possesses a~subsequence $x_{l_j}\in S(k_{l_j}T)\mathbf{B}$, which is attracted by the global attractor $\mathbf{A}$. Thus it has a convergent subsequence to some $x\in\mathbf{A}\subseteq\mathbf{M_0}$.
The claim that $\mathbf{A}\cup\mathbf{E_0}=\cl_{V}\mathbf{E_0}$ follows the lines of Step 3 in the proof of Theorem~\ref{thm:EXDEXPAT}. 
\end{proof}

Next we prove that the existence of a $T$-discrete exponential attractor implies the existence of a $\widetilde T$-discrete exponential attractor for  \emph{arbitrarily small} $\widetilde T>0$, i.e., the positive invariance holds with respect to arbitrarily small time steps, see \cite{sonner2012}. On the other hand, we note that $\mathbf{M_0}$ is a $kT$-discrete exponential attractor for any $k\in\mathbb{N}$ if $\mathbf{M_0}$ is a $T$-discrete exponential attractor. Hence, there also exists a $\widetilde T$-discrete exponential attractor for arbitrarily large $\widetilde T$.

\begin{thm}\label{thm:SMALLEXPATTR}
Let $\{S(t)\colon t\geq 0\}$ be an asymptotically closed semigroup on a complete metric space $(V,d)$, $T>0$ 
and $N\in\N$.
Then, the following statements are equivalent:
\begin{itemize}
\item[$(1)$] There exists a $T$-discrete exponential attractor $\mathbf{M_0}$ in $V$ for the semigroup.
\item[$(2)$] There exists a $\frac{T}{N}$-discrete exponential attractor $\mathbf{\widetilde{M}_0}$ in $V$ for the semigroup. 
\end{itemize}
\end{thm}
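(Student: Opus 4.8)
The plan is to treat the two implications separately: $(2)\Rightarrow(1)$ is immediate, while $(1)\Rightarrow(2)$ is reduced to the covering condition of Theorem~\ref{thm:EXDEXPAT}. For $(2)\Rightarrow(1)$ I would simply observe that if $\mathbf{\widetilde{M}_0}$ is $\tfrac{T}{N}$-positively invariant, then $S(T)\mathbf{\widetilde{M}_0}=S(\tfrac{T}{N})^{N}\mathbf{\widetilde{M}_0}\subseteq\mathbf{\widetilde{M}_0}$ by the semigroup property, and the remaining defining properties of an exponential attractor — compactness, the bound on the fractal dimension, and exponential attraction — make no reference to the time step; hence $\mathbf{\widetilde{M}_0}$ is also a $T$-discrete exponential attractor. (This is the $k=N$ instance of the observation recorded just before the theorem, applied with $T$ replaced by $\tfrac{T}{N}$.)

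For $(1)\Rightarrow(2)$ I would \emph{not} try to build $\mathbf{\widetilde{M}_0}$ directly from $\mathbf{M_0}$: the natural candidate $\bigcup_{j=0}^{N-1}S(j\tfrac{T}{N})\mathbf{M_0}$ is $\tfrac{T}{N}$-positively invariant but, without continuity of the semigroup, need not be closed or of finite fractal dimension — this is the one place where care is needed, and it is avoided by rebuilding the attractor via the covering condition. Instead, I would use Theorem~\ref{thm:EXDEXPAT} with time step $T$ to turn the existence of $\mathbf{M_0}$ into a nonempty bounded absorbing set, which by Remark~\ref{rem:POSINV} may be taken positively invariant, $S(t)\mathbf{B}\subseteq\mathbf{B}$ for all $t\geq0$, and satisfies
$$N^{V}(S(kT)\mathbf{B},aq^{k})\leq bh^{k},\quad k\geq k_{0},$$
for some $a,b>0$, $q\in(0,1)$, $h\geq1$. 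Since being absorbing is independent of the time step, it then suffices to verify the covering condition \eqref{e:CONDBALLSSEMB} for the step $\tfrac{T}{N}$ and re-apply Theorem~\ref{thm:EXDEXPAT} with that step; the hypotheses on $V$ and the semigroup are precisely those required for this.

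The remaining step is the conversion of the covering condition, which is elementary index bookkeeping. Writing $k=mN+r$ with $m=\lfloor k/N\rfloor$ and $0\leq r\leq N-1$, positive invariance of $\mathbf{B}$ and monotonicity of $S(mT)$ under inclusion give $S(k\tfrac{T}{N})\mathbf{B}=S(mT)S(r\tfrac{T}{N})\mathbf{B}\subseteq S(mT)\mathbf{B}$. Setting $\widetilde q=q^{1/N}\in(0,1)$, $\widetilde h=h^{1/N}\geq1$, $\widetilde a=aq^{-(N-1)/N}$, $\widetilde b=b$, $\widetilde k_{0}=Nk_{0}$, and using $\tfrac{k-N+1}{N}\leq m\leq\tfrac{k}{N}$ together with $q\in(0,1)$ and $h\geq1$, one obtains $\widetilde a\,\widetilde q^{\,k}\geq aq^{m}$ and $bh^{m}\leq\widetilde b\,\widetilde h^{\,k}$, hence
$$N^{V}\!\left(S\!\left(k\tfrac{T}{N}\right)\mathbf{B},\ \widetilde a\,\widetilde q^{\,k}\right)\leq N^{V}\!\left(S(mT)\mathbf{B},aq^{m}\right)\leq bh^{m}\leq\widetilde b\,\widetilde h^{\,k},\quad k\geq Nk_{0}.$$
This is \eqref{e:CONDBALLSSEMB} for the time step $\tfrac{T}{N}$, so Theorem~\ref{thm:EXDEXPAT} produces a $\tfrac{T}{N}$-discrete exponential attractor $\mathbf{\widetilde{M}_0}$; as a consistency check, the resulting dimension bound $\log_{1/\widetilde q}\widetilde h=\log_{1/q}h$ and the rate of attraction $\xi\in(0,\tfrac{N}{T}\ln\tfrac1{\widetilde q})=(0,\tfrac1T\ln\tfrac1q)$ agree with the ones from the $T$-step statement. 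The only thing to watch is keeping $\widetilde q\in(0,1)$, $\widetilde h\geq1$ after rescaling and pushing $\widetilde k_{0}$ high enough that $m\geq k_{0}$; there is no real obstacle beyond that.
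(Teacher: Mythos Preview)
Your proof is correct and takes a genuinely different route from the paper for $(1)\Rightarrow(2)$. Ironically, the paper does exactly what you dismiss: it builds $\mathbf{\widetilde{M}_0}=\bigcup_{l=0}^{N-1}S(\tfrac{lT}{N})\mathbf{M_0}$, but sidesteps your continuity concern by going through the internal structure of $\mathbf{M_0}$ from the proof of Theorem~\ref{thm:EXDEXPAT}. Specifically, it enlarges the countable set $\mathbf{E_0}=\bigcup_{k\geq k_0}Q_k$ to $\mathbf{\widetilde{E}_0}=\bigcup_{l=0}^{N-1}S(\tfrac{lT}{N})\mathbf{E_0}$, observes that each $S(\tfrac{lT}{N})Q_k$ is still a finite subset of $S(kT)\mathbf{B}$ of cardinality at most $\#Q_k$, re-runs the counting argument to bound $N^V(\mathbf{\widetilde{E}_0},2aq^k)$ and obtain $\dimf^V(\mathbf{\widetilde{E}_0})\leq\log_{1/q}h$, and then shows $\cl_V\mathbf{\widetilde{E}_0}=\mathbf{A}\cup\mathbf{\widetilde{E}_0}=\bigcup_{l=0}^{N-1}S(\tfrac{lT}{N})\mathbf{M_0}$ via the invariance of $\mathbf{A}$. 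Your route is more economical: it uses only the \emph{statement} of Theorem~\ref{thm:EXDEXPAT}, reducing everything to a reparametrization of the covering condition, so the construction need not be reopened. The paper's approach buys an explicit description of $\mathbf{\widetilde{M}_0}$ in terms of $\mathbf{M_0}$ and makes the preservation of the dimension bound and attraction rate transparent (as recorded in the Remark following the theorem). One small technicality in your bookkeeping: since $N^V(G,\varepsilon)$ is defined with centers in $G$, the inclusion $S(k\tfrac{T}{N})\mathbf{B}\subseteq S(mT)\mathbf{B}$ only yields $N^V(S(k\tfrac{T}{N})\mathbf{B},2\widetilde a\,\widetilde q^{\,k})\leq N^V(S(mT)\mathbf{B},aq^m)$ after re-centering; this just replaces $\widetilde a$ by $2\widetilde a$ and is harmless.
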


\begin{proof}
Assume that $\mathbf{M_0}$ is a $T$-discrete exponential attractor. By Theorem~\ref{thm:EXDEXPAT} there exists a~positively invariant bounded absorbing set $\mathbf{B}\subseteq V$ for the semigroup such that \eqref{e:CONDBALLSSEMB} holds, that is,
\begin{equation*}
N^{V}(S(kT)\mathbf{B},aq^{k})\leq b h^{k},\ k\in\N,\ k\geq k_0,
\end{equation*}
for some $k_0\in\N$, $a,b>0$, $q\in(0,1)$ and $h\geq 1$. Moreover, we have
$$\mathbf{M_0}=\mathbf{A}\cup\mathbf{E_0}=\cl_{V}\mathbf{E_0}\subseteq \mathbf{B},$$
where $\mathbf{A}$ is the global attractor for the semigroup and $\displaystyle\mathbf{E_0}=\bigcup_{k\geq k_0}Q_k$
with $Q_k$ being finite subsets of $S(kT)\mathbf{B}$ with $\displaystyle\# Q_{k}\leq b\sum_{l=0}^{k-k_0}h^{k-l}$.

We define 
$$\widetilde{Q}_{k}:=\bigcup_{l=0}^{N-1}S\left(\tfrac{lT}{N}\right)Q_k,\quad \mathbf{\widetilde{E}_0}:=\bigcup_{l=0}^{N-1}S\left(\tfrac{lT}{N}\right)\mathbf{E_0}=\bigcup_{k\geq k_0}\widetilde{Q}_{k},\quad \mathbf{\widetilde{M}_0}:=\cl_{V}\mathbf{\widetilde{E}_0}.$$
Each $\widetilde{Q}_{k}$ is a finite subset of 
$$\bigcup_{l=0}^{N-1}S\left(\tfrac{lT}{N}\right)S(kT)\mathbf{B}\subseteq S(kT)\mathbf{B}\subseteq\mathbf{B}\ \text{ and }\ \#\widetilde{Q}_{k}\leq bN\sum_{l=0}^{k-k_0}h^{k-l}.$$
For $l\geq k\geq k_0$ we have
$\widetilde{Q}_{l}\subseteq S(kT)S((l-k)T)\mathbf{B}\subseteq S(kT)\mathbf{B},$
and hence,
$$\mathbf{\widetilde{E}_0}\subseteq \bigcup_{l=k_0}^{k}\widetilde{Q}_{l}\cup S(kT)\mathbf{B}.$$
By \eqref{e:CONDBALLSSEMB} we know that
$$N^{V}(S(kT)\mathbf{B}\cap\mathbf{\widetilde{E}_0},2aq^{k})\leq bh^{k},\ k\geq k_0$$
and consequently we conclude that
$$N^{V}(\mathbf{\widetilde{E}_0},2aq^{k})\leq 2bN(k-k_0+1)^{2}h^{k},\ k\geq k_0.$$
Reasoning as in Step~2 of the proof of Theorem~\ref{thm:EXDEXPAT} we conclude that $\mathbf{\widetilde{E}_0}$ is precompact and
\begin{equation}\label{e:ESTTILDEE0}
\dimf^{V}(\mathbf{\widetilde{E}_0})\leq\log_{\frac{1}{q}}h.
\end{equation}
Since $\mathbf{M_0}\subseteq\mathbf{\widetilde{M}_0}$, for any $\xi\in(0,\frac{1}{T}\ln{\frac{1}{q}})$ and any  bounded set $G$ in $V$ we have
$$e^{\xi t}\dist^{V}(S(t)G,\mathbf{\widetilde{M}_0})\to 0\ \text{ as}\ t\to\infty,$$
i.e., $\mathbf{\widetilde{M}_0}$ exponentially attracts all bounded sets. 

The set $\mathbf{\widetilde{M}_0}$ is compact and as in the proof of Theorem \ref{thm:EXDEXPAT} we show that 
$\mathbf{\widetilde{M}_0}=\mathbf{A}\cup\mathbf{\widetilde{E}_0}$, and consequently, 
$\mathbf{\widetilde{M}_0}=\bigcup_{l=0}^{N-1}S\left(\tfrac{lT}{N}\right)\mathbf{M_0}.$
Furthermore, we observe that
$$S(\tfrac{T}{N})\mathbf{\widetilde{M}_0}=\bigcup_{l=1}^{N-1}S(\tfrac{lT}{N})\mathbf{M_0}\cup S(T)\mathbf{M_0}\subseteq\bigcup_{l=0}^{N-1}S(\tfrac{lT}{N})\mathbf{M_0}=\mathbf{\widetilde{M}_0},$$
which shows the positive invariance with respect to the time step $\frac{T}{N}$. 

Note that  $\dimf^{V}(\mathbf{A})\leq\log_{\frac{1}{q}}h$ (see \eqref{e:ESTGLOBSEM}), and thus by \eqref{e:ESTTILDEE0}
we get the estimate
$$\dimf^{V}(\mathbf{\widetilde{M}_0})=\max\{\dimf^{V}(\mathbf{A}),\dimf^{V}(\mathbf{\widetilde{E}_0})\}\leq\log_{\frac{1}{q}}h.$$ 
Hence $\mathbf{\widetilde{M}_0}$ is a $\frac{T}{N}$-discrete exponential attractor for the semigroup.

Conversely, if $\mathbf{\widetilde{M}_0}$ is a $\frac{T}{N}$-discrete exponential attractor, then $\mathbf{M_0}=\mathbf{\widetilde{M}_0}$ is also a $T$-discrete exponential attractor, since applying $N$ times the inclusion $S(\frac{T}{N})\mathbf{M_0}\subseteq\mathbf{M_0}$, we get $S(T)\mathbf{M_0}\subseteq\mathbf{M_0}$. 
\end{proof}

\begin{rem}
Observe that we obtain the same upper bound for the fractal dimension and the same rate of exponential attraction for  
the exponential attractors $\mathbf{M_0}$ and $\mathbf{\widetilde{M}_0}$  in the above proof.
\end{rem}

\section{Construction based on quasi-stability}\label{sec:QS}

There exist different approaches to construct exponential attractors for semigroups.
We compare several broadly used methods and show that the assumptions lead to the covering condition \eqref{e:CONDBALLSSEMB} with specific constants $h$ and $q$ determining the bound for the fractal dimension and exponential rate of attraction of the exponential attractor in Theorem \ref{thm:EXDEXPAT}. The most general method is based on the quasi-stability of a~semigroup, as introduced by I.~Chueshov in \cite[Definition 3.4.1]{chueshov} (see also \cite{cl2008}), that we address in this section. 
  
We first recall that a pseudometric space $(A,\rho)$ is a nonempty set $A$ with a function $\rho\colon A\times A\to[0,\infty)$ that
is symmetric, satisfies the triangle inequality and $\rho(x,x)=0$ for $x\in A$. We say that $(A,\rho)$ is precompact
 if each sequence in $A$ contains a Cauchy subsequence with respect to $\rho$.
Equivalently, this means that $A$ is totally bounded, i.e., for any $\eps>0$ there exists
a finite cover of $A$ by open $\eps$-balls centered at points from $A$.

Given a~precompact pseudometric space $(A,\rho)$ and a nonempty subset $F\subseteq A$, we denote by $m_{\rho}(F,\eps)$  the maximal cardinality of an $\eps$-distinguishable subset $U$ of $F$, i.e.,
\begin{equation*}
\rho(x,y)\geq\eps,\ x,y\in U\subseteq F,\ x\neq y.
\end{equation*}

\begin{rem}
Note that for any $\eps>0$ we have
$$1\leq m_{\rho}(F,\eps)\leq m_{\rho}(A,\eps)<\infty.$$
Indeed, by assumption there are no $\eps$-distinguishable subsets of $F$
which contain a countably infinite number of points, that is, all these sets are finite.
If we consider a family ${\mathcal F}^{\eps}$ of $\eps$-distinguishable subsets of $F$ with the inclusion relation, then
each chain of subsets will have an upper bound given by the union of sets in this chain. Consequently,
each set in this family is contained in a certain maximal element of ${\mathcal F}^{\eps}$.

Suppose that the cardinalities of these maximal elements of ${\mathcal F}^{\eps}$ are unbounded. Then, choosing a~maximal element
$\hat {\mathcal X}$ in ${\mathcal F}^{\frac{\eps}{4}}$ with $\hat {\mathcal X}$ consisting of points denoted by
$\hat x_{k}\in F$, $k=1,\ldots, n_{\hat {\mathcal X}}$, we find a maximal element of ${\mathcal F}^{\eps}$ which contains at least
$n_{\hat {\mathcal X}}+1$ points outside of $\hat {\mathcal X}$. These points lie in the union of $n_{\hat {\mathcal X}}$ balls
$\{x\in F\colon\rho(x,\hat x_{k})<\frac{\eps}{4}\}$. Thus there are at least two points $y,z\in F$ satisfying
\begin{equation}\label{e:EPS0}
\rho(y,z)\geq \eps
\end{equation}
which also satisfy for some $\hat x_{k}$
$$\rho(y,\hat x_{k})<\frac{\eps}{4}\ \text{ and } \ \rho(z, \hat x_{k})< \frac{\eps}{4}.$$
On the other hand, we have
$$\rho(y, z) \leq \rho(y, \hat x_{k}) + \rho(z, \hat x_{k})<\frac{\eps}{2},$$
which contradicts \eqref{e:EPS0}. Thus, the cardinalities of maximal elements of ${\mathcal F}^{\eps}$ remain bounded.
\end{rem}

To show that the quasi-stability of a semigroup implies the covering condition \eqref{e:CONDBALLSSEMB}
we use the following fundamental lemma from \cite[p.~25]{cl2008}.
In the sequel, $\widehat{N}^{V}(B,\eps)$ denotes the minimal number of subsets of $B$ in a metric space $(V,d)$ with diameter no larger than $2\eps$ necessary to cover the set $B$. 

\begin{lem}(cf. \cite[p.~25]{cl2008}) \label{lem:FUNDAMENTAL}
Let $A$ be a nonempty subset of a metric space $(V,d)$ and assume that there is
a~pseudometric $\rho$ on $A$ such that $(A,\rho)$ is a precompact pseudometric space.
Suppose that for a~map $S\colon A\to V$ there exists $\eta\geq 0$ such that
\begin{equation}\label{e:MAININEQ}
d(S(x),S(y))\leq\eta d(x,y)+\rho(x,y),\ x,y\in A.
\end{equation}
If $\widehat{N}^{V}(A,\eps)<\infty$ 
for some $\eps>0$, then for any $\sigma>0$ we have
\begin{equation}\label{e:CRUCIAL}
\widehat{N}^{V}(S(A),(\eta+\sigma)\eps)\leq \widehat{N}^{V}(A,\eps)c_\rho(A,\eps,\sigma\eps),
\end{equation}
where
\begin{equation}\label{e:DEFCRHO}
c_\rho(A,\eps,\mu):=\sup\left\{m_{\rho}(F,\mu)\colon\emptyset\neq F\subseteq A,\ \diam^{V}(F)\leq 2\eps\right\}\leq {m_{\rho}(A,\mu)}.
\end{equation}
\end{lem}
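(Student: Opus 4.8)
The plan is a two-stage covering argument: first cover $A$ by finitely many sets of small $d$-diameter, then sub-cover each of those by $\rho$-balls, and finally push everything forward through $S$ using the bound \eqref{e:MAININEQ} to control the diameters of the images.

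First I would use the hypothesis $\widehat{N}^V(A,\eps)<\infty$ to fix a cover $A=F_1\cup\cdots\cup F_n$ with $n=\widehat{N}^V(A,\eps)$, each $F_i\subseteq A$ nonempty and $\diam^V(F_i)\leq 2\eps$ (the definition of $\widehat N^V$ already produces covering sets that are subsets of $A$). For each $i$, the restriction of $\rho$ to $F_i$ makes $(F_i,\rho)$ a precompact pseudometric space, so $m_\rho(F_i,\sigma\eps)$ is finite and there is a $\sigma\eps$-distinguishable subset $\{x_{i,1},\ldots,x_{i,m_i}\}\subseteq F_i$ of maximal cardinality $m_i=m_\rho(F_i,\sigma\eps)$; since $\diam^V(F_i)\leq 2\eps$, the definition \eqref{e:DEFCRHO} gives $m_i\leq c_\rho(A,\eps,\sigma\eps)$. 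By maximality of this distinguishable set, the $\rho$-balls $G_{i,j}=\{y\in F_i\colon \rho(y,x_{i,j})<\sigma\eps\}$, $j=1,\ldots,m_i$, cover $F_i$: an uncovered point could otherwise be adjoined to the distinguishable set, a contradiction.

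Next I would estimate the diameters under $S$. Fix $i,j$ and take $y,z\in G_{i,j}$. Since $G_{i,j}\subseteq F_i$ we have $d(y,z)\leq 2\eps$, and the triangle inequality for $\rho$ gives $\rho(y,z)\leq\rho(y,x_{i,j})+\rho(z,x_{i,j})<2\sigma\eps$. Substituting into \eqref{e:MAININEQ} yields $d(S(y),S(z))\leq 2\eta\eps+2\sigma\eps=2(\eta+\sigma)\eps$, so $\diam^V(S(G_{i,j}))\leq 2(\eta+\sigma)\eps$. Because $\bigcup_j G_{i,j}=F_i$ for each $i$ and $\bigcup_i F_i=A$, the sets $S(G_{i,j})$ cover $S(A)$, and there are $\sum_{i=1}^n m_i\leq n\,c_\rho(A,\eps,\sigma\eps)=\widehat{N}^V(A,\eps)\,c_\rho(A,\eps,\sigma\eps)$ of them, each of diameter at most $2(\eta+\sigma)\eps$. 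This is precisely \eqref{e:CRUCIAL}. The auxiliary inequality $c_\rho(A,\eps,\mu)\leq m_\rho(A,\mu)$ in \eqref{e:DEFCRHO} is immediate, since any $\mu$-distinguishable subset of $F\subseteq A$ is also a $\mu$-distinguishable subset of $A$, and $m_\rho(A,\mu)<\infty$ by the precompactness of $(A,\rho)$ (this finiteness was the content of the preceding remark).

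I do not expect a real obstacle here. The only points requiring care are (i) arranging the nesting $G_{i,j}\subseteq F_i\subseteq A$ so that the $d$-diameter bound $2\eps$ and the $\rho$-diameter bound $2\sigma\eps$ are \emph{simultaneously} available on each $G_{i,j}$, which is what makes the two contributions in \eqref{e:MAININEQ} combine to $2(\eta+\sigma)\eps$; and (ii) invoking the precompactness of $(A,\rho)$ to guarantee that $m_\rho(F_i,\sigma\eps)$ is finite so that a maximal $\sigma\eps$-distinguishable subset actually exists. Both were essentially prepared by the earlier remark, so the argument should proceed cleanly.
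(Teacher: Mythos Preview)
Your proposal is correct and follows essentially the same approach as the paper: cover $A$ by sets $F_i$ of $d$-diameter at most $2\eps$, sub-cover each $F_i$ by $\rho$-balls around a maximal $\sigma\eps$-distinguishable set, and then use \eqref{e:MAININEQ} to bound the $d$-diameter of the image of each such ball by $2(\eta+\sigma)\eps$. The only addition is that you also justify the auxiliary inequality $c_\rho(A,\eps,\mu)\leq m_\rho(A,\mu)$, which the paper states without comment.
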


\begin{proof}
By assumption 
$\widehat{N}=\widehat{N}^{V}(A,\eps)<\infty$, and thus, we have
$$A=\bigcup_{i=1}^{\widehat{N}}F_i,$$
where $\emptyset\neq F_i\subseteq A$ and $\diam^{V}(F_i)\leq 2\eps$.
We fix $\sigma>0$ and set $m_i=m_{\rho}(F_i,\sigma\eps)\in\N$.
Let $\{x_1^{i},\ldots,x_{m_i}^{i}\}\subseteq F_i$ be a~$\sigma\eps$-distinguishable subset of $F_i$ in $(A,\rho)$. Then,  we have
$$\rho(x_j^{i},x_{l}^{i})\geq\sigma\eps,\ j\neq l,$$
and
$$m_i=m_{\rho}(F_i,\sigma\eps)\leq c_\rho(A,\eps,\sigma\eps).$$
It follows that
$$F_i=\bigcup_{j=1}^{m_i}C_j^{i},\quad C_j^{i}=\{x\in F_i\colon\rho(x,x_j^{i})<\sigma\eps\}.$$
Indeed, let $x\in F_i$ and note that if $x=x_j^{i}$ for some $j\in\{1,\ldots,m_i\}$, then
$x\in C_j^{i}$. On the other hand, if $x\neq x_j^{i}$ for any $j\in\{1,\ldots, m_i\}$, then from the maximality of $m_i$
it follows that $\rho(x,x_{j_0}^{i})<\sigma\eps$ for some $j_0\in\{1,\ldots, m_i\}$ and hence, $x\in C_{j_0}^{i}$.

Consequently, we obtain
$$A=\bigcup_{i=1}^{\widehat{N}}\bigcup_{j=1}^{m_i}C_j^{i}\ \mbox{ and }\ S(A)=\bigcup_{i=1}^{\widehat{N}}\bigcup_{j=1}^{m_i}S(C_j^{i}).$$
Note that if $x,y\in C_j^{i}\subseteq F_i$ then $\diam^{V}(C_j^{i})\leq\diam^{V}(F_i)\leq 2\eps$
and
$$\rho(x,y)\leq\rho(x,x_j^{i})+\rho(x_j^{i},y)<2\sigma\eps.$$
Applying \eqref{e:MAININEQ}, we have
$$d(S(x),S(y))<\eta d(x,y)+2\sigma\eps,$$
which yields
$$\diam^{V}(S(C_j^{i}))\leq\eta\diam^{V}(C_j^{i})+2\sigma\eps\leq2(\eta+\sigma)\eps$$
and in consequence \eqref{e:CRUCIAL}.
\end{proof}

We recall that a function $\mathfrak{n}_{Z}\colon Z\to[0,\infty)$ is a compact seminorm on a normed space $Z$ if it is a seminorm and for any bounded sequence $z_k\in Z$ there exists	a Cauchy subsequence $z_{k_j}$ with respect to $\mathfrak{n}_Z$, that is,
$\mathfrak{n}_{Z}(z_{k_j}-z_{k_l})\to 0$ as $j,l\to\infty$.

\begin{defn}\label{defn:QUASISTABILITY}
We say that a semigroup $\{S(t)\colon t\geq 0\}$ on a metric space $(V,d)$ is \emph{quasi-stable on a set $B\subseteq V$ at  time $T>0$ with respect to a compact seminorm} $\mathfrak{n}_{Z}$ if there exist constants $\eta\in[0,1)$, $\kappa>0$
and a~map $K\colon B\to Z$ into some auxiliary normed space $Z$ such that
\begin{align}\label{e:DIMSEM1}
\norm{Kx-Ky}_{Z}&\leq \kappa d(x,y),\ x,y\in B,\\
\label{e:DIMSEM2}
d(S(T)x,S(T)y)&\leq\eta d(x,y)+\mathfrak{n}_{Z}(Kx-Ky),\ x,y\in B.
\end{align}
\end{defn}

Following the proof of \cite[Theorem 3.1.21]{chueshov}, we now show that the quasi-stability
of a~semigroup on a  positively invariant bounded absorbing set $\mathbf{B}$ implies the covering condition  \eqref{e:CONDBALLSSEMB} and hence, the existence of a $T$-discrete exponential attractor $\mathbf{M_0}$ if the semigroup is asymptotically closed. 
For a given $\sigma\in(0,1-\eta)$, the estimates for the fractal dimension of $\mathbf{M_0}$ and the global attractor $\mathbf{A}$ are expressed in terms of the maximal cardinality of $\frac{\sigma}{2\kappa}$-distinguishable subsets of the closed unit ball $\overline{B}^{Z}(0,1)=\{z\in Z\colon\norm{z}_{Z}\leq 1\}$ in $Z$ with respect to the pseudometric generated by the seminorm $\mathfrak{n}_{Z}$ which we denote by
$$\mathfrak{m}_{Z}\left(\tfrac{\sigma}{2\kappa}\right)=m_{\mathfrak{n}_Z}(\overline{B}^{Z}(0,1),\tfrac{\sigma}{2\kappa}).$$

\begin{thm}\label{thm:QUASI}
Let $\{S(t)\colon t\geq 0\}$ be a semigroup on a metric space $(V,d)$, $T>0$ and let $B$ be a nonempty bounded set such that $S(T)B\subseteq B$. If the semigroup is quasi-stable on $B$ at time $T$ with respect to a compact seminorm $\mathfrak{n}_{Z}$ and parameters $(\eta,\kappa)$, then for any $\sigma\in(0,1-\eta)$
the covering condition
\begin{equation}\label{e:CONDBALLSSEMB2}
N^{V}(S(kT)B,aq^{k})\leq b h^{k},\ k\in\N,\ k\geq k_0,
\end{equation}
for some $k_0\in\N$ and $a,b>0$,
is satisfied with  $q=\eta+\sigma$ and $h=\mathfrak{m}_{Z}\left(\tfrac{\sigma}{2\kappa}\right)$.
\end{thm}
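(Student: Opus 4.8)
The plan is to iterate Lemma~\ref{lem:FUNDAMENTAL} $k$ times, applied to the map $S=S(T)$ on the set $A=B$, and then translate the resulting bound on $\widehat N^{V}$ into a bound on $N^{V}$. The key point is that quasi-stability on $B$ at time $T$ gives exactly the inequality \eqref{e:MAININEQ} required by the lemma: take $\rho(x,y)=\mathfrak n_{Z}(Kx-Ky)$, which is a pseudometric on $B$ (symmetric, triangle inequality, vanishes on the diagonal) that is precompact because $\mathfrak n_{Z}$ is a compact seminorm and $K$ is Lipschitz by \eqref{e:DIMSEM1}, hence maps the bounded set $B$ into a bounded subset of $Z$. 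Then \eqref{e:DIMSEM2} is precisely $d(S(T)x,S(T)y)\leq\eta\, d(x,y)+\rho(x,y)$, so Lemma~\ref{lem:FUNDAMENTAL} applies with the given $\eta$.

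First I would fix $\sigma\in(0,1-\eta)$, set $q=\eta+\sigma\in(0,1)$, and pick a starting radius $\eps_0>0$ with $\widehat N^{V}(B,\eps_0)<\infty$ — this is available since $B$ is bounded, so $\widehat N^{V}(B,\eps_0)=1$ already works for $\eps_0=\diam^{V}(B)/2$ (or larger). Applying \eqref{e:CRUCIAL} with this $\eps_0$ and the chosen $\sigma$ gives
$$\widehat N^{V}\bigl(S(T)B,q\eps_0\bigr)\leq \widehat N^{V}(B,\eps_0)\,c_\rho(B,\eps_0,\sigma\eps_0).$$
Since $S(T)B\subseteq B$, I can iterate: at the $j$-th step I apply the lemma to the subset $S(jT)B\subseteq B$ with radius $\eps_j=q^{j}\eps_0$, obtaining
$$\widehat N^{V}\bigl(S((j{+}1)T)B,q^{j+1}\eps_0\bigr)\leq \widehat N^{V}\bigl(S(jT)B,q^{j}\eps_0\bigr)\,c_\rho\bigl(B,q^{j}\eps_0,\sigma q^{j}\eps_0\bigr).$$
Multiplying these $k$ inequalities telescopes to
$$\widehat N^{V}\bigl(S(kT)B,q^{k}\eps_0\bigr)\leq \widehat N^{V}(B,\eps_0)\prod_{j=0}^{k-1}c_\rho\bigl(B,q^{j}\eps_0,\sigma q^{j}\eps_0\bigr).$$

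The decisive step is bounding each factor $c_\rho(B,q^{j}\eps_0,\sigma q^{j}\eps_0)$ by a constant independent of $j$. By \eqref{e:DEFCRHO} this supremum is taken over nonempty $F\subseteq B$ with $\diam^{V}(F)\leq 2q^{j}\eps_0$, and a $\sigma q^{j}\eps_0$-distinguishable subset of such an $F$ (with respect to $\rho=\mathfrak n_{Z}(K\cdot-K\cdot)$) has, via the Lipschitz bound \eqref{e:DIMSEM1}, image under $K$ lying in a $Z$-ball of radius $\kappa q^{j}\eps_0$ and being $\sigma q^{j}\eps_0$-distinguishable in the seminorm $\mathfrak n_{Z}$; rescaling by $1/(\kappa q^{j}\eps_0)$ shows its cardinality is at most $m_{\mathfrak n_Z}(\overline B^{Z}(0,1),\tfrac{\sigma}{2\kappa})$ — here the factor $2$ absorbs the translation of the ball to be centered at $0$ — which is exactly $\mathfrak m_{Z}(\tfrac{\sigma}{2\kappa})=:h$. (This is finite because $\mathfrak n_{Z}$ is a compact seminorm, so $(\overline B^{Z}(0,1),\mathfrak n_{Z})$ is a precompact pseudometric space.) Hence $c_\rho(B,q^{j}\eps_0,\sigma q^{j}\eps_0)\leq h$ for every $j$, and the product above is at most $\widehat N^{V}(B,\eps_0)\,h^{k}$.

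Finally I would convert $\widehat N^{V}$ to $N^{V}$: a cover by sets of diameter $\leq 2\eps$ refines to a cover by open balls of radius, say, $2\eps$ centered in those sets, so $N^{V}(S(kT)B,2q^{k}\eps_0)\leq \widehat N^{V}(S(kT)B,q^{k}\eps_0)\leq \widehat N^{V}(B,\eps_0)\,h^{k}$. Setting $a=2\eps_0$, $b=\widehat N^{V}(B,\eps_0)$, $k_0=1$, and $q=\eta+\sigma$, $h=\mathfrak m_{Z}(\tfrac{\sigma}{2\kappa})$ gives \eqref{e:CONDBALLSSEMB2}. The only real subtlety is the uniform-in-$j$ bound on $c_\rho$, i.e.\ the scale-invariance coming from the homogeneity of the seminorm and the Lipschitz estimate on $K$; once that is in hand the rest is the telescoping iteration.
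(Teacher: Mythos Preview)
Your proof is correct and follows essentially the same route as the paper: define $\rho(x,y)=\mathfrak n_Z(Kx-Ky)$, use the Lipschitz bound \eqref{e:DIMSEM1} and homogeneity of the seminorm to show $c_\rho(B,\eps,\sigma\eps)\leq\mathfrak m_Z(\tfrac{\sigma}{2\kappa})$ uniformly in $\eps$ via translation and rescaling into $\overline B^Z(0,1)$, then iterate Lemma~\ref{lem:FUNDAMENTAL} and convert $\widehat N^V$ to $N^V$. The paper's writeup differs only in cosmetic details (it takes $\eps_0=\tfrac12\max\{\diam^V(B),1\}$ so that $b=1$, and uses the conversion $N^V(A,3\eps)\leq\widehat N^V(A,\eps)$, which avoids the open-vs-closed ball issue in your factor-$2$ conversion).
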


\begin{proof}
We set
$R=\max\{\diam^{V}(B),1\}$.
Note that $(B,\rho)$ is a precompact pseudometric space with
$$\rho(x,y)=\mathfrak{n}_{Z}(Kx-Ky),\ x,y\in B,$$
since $\mathfrak{n}_{Z}$ is compact, \eqref{e:DIMSEM1} holds
and $B$ is bounded.
In order to apply Lemma~\ref{lem:FUNDAMENTAL}, for $\sigma>0$ we estimate from above
the quantity
$$\varsigma_{\rho}(B,\sigma)=\sup_{\eps>0}c_{\rho}(B,\eps,\sigma\eps),$$
where $c_{\rho}(\cdot,\cdot,\cdot)$ is defined in \eqref{e:DEFCRHO}.

We fix $\eps>0$ and $\emptyset\neq F\subseteq B$ with
$\diam^{V}(F)\leq 2\eps$.
Let $m_{F}=m_{\rho}(F,\sigma\eps)$ and $\{y_1,\ldots,y_{m_{F}}\}$
be the maximal $\sigma\eps$-distinguishable subset of $F$ in $(B,\rho)$.
We define  $z_{j}= Ky_j\in Z$, $j=1,\ldots, m_{F}$, and observe that
\begin{align}\label{eq:proofQS}
\mathfrak{n}_{Z}(z_j-z_l)\geq\sigma\eps\ \text{ for }\ 1\leq j,l\leq m_F,\ j\neq l.
\end{align}
Also, due to \eqref{e:DIMSEM1}, we obtain
\begin{equation}\label{e:DIFFERENCEZJ}
\norm{z_j-z_l}_{Z}\leq \kappa\diam^{V}(F)\leq 2\eps \kappa,\ 1\leq j,l\leq m_F.
\end{equation}
We now choose an arbitrary point $z_j$, denote it by $z_0$, and
note that \eqref{eq:proofQS} and \eqref{e:DIFFERENCEZJ} imply
$$\tfrac{1}{2\eps\kappa}(z_j-z_0)\in\overline{B}^{Z}(0,1),\ 1\leq j\leq m_F,$$
and
$$\mathfrak{n}_{Z}\left(\tfrac{1}{2\eps\kappa}(z_j-z_0)-\tfrac{1}{2\eps\kappa}(z_l-z_0)\right)\geq\tfrac{\sigma}{2\kappa}\ \text{ for }\ 1\leq j,l\leq m_F,\ j\neq l.$$
By the compactness of $\mathfrak{n}_{Z}$ the unit ball $\overline{B}^{Z}(0,1)$
is precompact in $(Z,\zeta)$ with the pseudometric
$$\zeta(w,z)=\mathfrak{n}_{Z}(w-z),\ w,z\in Z,$$
and thus $m_F$ is bounded from above by $m_{\zeta}\big(\overline{B}^{Z}(0,1),\frac{\sigma}{2\kappa}\big)=\mathfrak{m}_{Z}\left(\frac{\sigma}{2\kappa}\right)$.
This shows that for any nonempty $A\subseteq B$ and $\eps>0$ we have by \eqref{e:DEFCRHO}
$$c_{\rho}(A,\eps,\sigma\eps)\leq\varsigma_{\rho}(B,\sigma)\leq\mathfrak{m}_{Z}\left(\tfrac{\sigma}{2\kappa}\right).$$
We apply \eqref{e:DIMSEM2} and Lemma~\ref{lem:FUNDAMENTAL} with $S=S(T)$, $A=B$
and $\eps=\frac{1}{2}R$ to get with $q=\eta+\sigma$
$$\widehat{N}^{V}(S(T)B,\tfrac{q}{2}R)\leq\widehat{N}^{V}(B,\tfrac{1}{2}R)\mathfrak{m}_{Z}\left(\tfrac{\sigma}{2\kappa}\right)=\mathfrak{m}_{Z}\left(\tfrac{\sigma}{2\kappa}\right).$$
Now we can apply Lemma~\ref{lem:FUNDAMENTAL} with $S=S(T)$
and $A=S(T)B\subseteq B$ and $\eps=\frac{q}{2}R$ to get
$$\widehat{N}^{V}(S(2T)B, \tfrac{q^2}{2}R)\leq (\mathfrak{m}_{Z}\left(\tfrac{\sigma}{2\kappa}\right))^{2}.$$
Using Lemma~\ref{lem:FUNDAMENTAL} again, we obtain by induction for $k\in\N$
$$\widehat{N}^{V}(S(kT)B, \tfrac{q^{k}}{2} R)\leq
\left(\mathfrak{m}_{Z}\left(\tfrac{\sigma}{2\kappa}\right)\right)^{k}.$$
Since $N^{V}(A,3\eps)\leq\widehat{N}^{V}(A,\eps)$, we conclude that
the covering condition \eqref{e:CONDBALLSSEMB} is satisfied with $a=\frac{3}{2}R, b=1$, $q=\eta+\sigma$ and $h=\mathfrak{m}_{Z}\left(\frac{\sigma}{2\kappa}\right)$. 
\end{proof}

Combining Theorems~\ref{thm:EXDEXPAT} and \ref{thm:QUASI} with Remark~\ref{rem:POSINV}, we get the following existence result for $T$-discrete exponential attractors.

\begin{thm}
Let $\{S(t)\colon t\geq 0\}$ be an asymptotically closed semigroup on a~complete metric space $(V,d)$, $T>0$ and let $\mathbf{B}\subseteq V$ be a bounded absorbing set for the semigroup. If the semigroup is quasi-stable on $\mathbf{B}$ at time $T$ with respect to a~compact seminorm $\mathfrak{n}_{Z}$ and parameters $(\eta,\kappa)$, then for any $\sigma\in(0,1-\eta)$ there exists a~$T$-discrete exponential attractor $\mathbf{M_0}\subseteq\mathbf{B}$ in $V$ for the semigroup with rate of attraction $\xi\in(0,\frac{1}{T}\ln{\frac{1}{\eta+\sigma}})$, and its fractal dimension is bounded by
\begin{equation*}
\dimf^{V}(\mathbf{M_0})\leq\log_{\frac{1}{\eta+\sigma}}\mathfrak{m}_{Z}\left(\tfrac{\sigma}{2\kappa}\right).
\end{equation*}
Moreover, the semigroup has a global attractor $\mathbf{A}$  contained in $\mathbf{M_0}$.
\end{thm}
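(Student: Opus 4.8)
The plan is to read off the statement as a direct concatenation of Remark~\ref{rem:POSINV}, Theorem~\ref{thm:QUASI} and Theorem~\ref{thm:EXDEXPAT}, applied in that order. The only non-automatic point is to arrange that the set on which we apply Theorem~\ref{thm:QUASI} is positively invariant (so that $S(T)$ maps it into itself, as required there), and to check that quasi-stability is not lost in the process.

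First I would replace $\mathbf{B}$ by a positively invariant absorbing subset. Let $t_{\mathbf{B}}\geq 0$ be an absorbing time for $\mathbf{B}$, so that $S(t)\mathbf{B}\subseteq\mathbf{B}$ for all $t\geq t_{\mathbf{B}}$, and set
$$\mathbf{B}'=\bigcup_{t\geq t_{\mathbf{B}}}S(t)\mathbf{B}\subseteq\mathbf{B}$$
as in Remark~\ref{rem:POSINV}. Then $\mathbf{B}'$ is a nonempty bounded absorbing set, it is positively invariant, and in particular $S(T)\mathbf{B}'\subseteq\mathbf{B}'$. Since the quasi-stability inequalities \eqref{e:DIMSEM1}--\eqref{e:DIMSEM2} are postulated for \emph{all} points of $\mathbf{B}$, they hold a fortiori for all points of the subset $\mathbf{B}'$ (with the map $K$ restricted to $\mathbf{B}'$), so the semigroup is quasi-stable on $\mathbf{B}'$ at time $T$ with the same compact seminorm $\mathfrak{n}_{Z}$ and the same parameters $(\eta,\kappa)$.

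Next, for a fixed $\sigma\in(0,1-\eta)$ I would apply Theorem~\ref{thm:QUASI} with $B=\mathbf{B}'$: since $\mathbf{B}'$ is bounded, $S(T)\mathbf{B}'\subseteq\mathbf{B}'$, and the semigroup is quasi-stable on $\mathbf{B}'$, the covering condition \eqref{e:CONDBALLSSEMB} holds for $\mathbf{B}'$ with $q=\eta+\sigma\in(0,1)$ and $h=\mathfrak{m}_{Z}\!\left(\tfrac{\sigma}{2\kappa}\right)\geq 1$, for suitable $k_0\in\N$ and $a,b>0$. Finally I would invoke Theorem~\ref{thm:EXDEXPAT}: the semigroup is asymptotically closed, $V$ is complete, and $\mathbf{B}'$ is a nonempty bounded absorbing set satisfying \eqref{e:CONDBALLSSEMB}, so statement~(2) there is in force, and hence statement~(1): there is a $T$-discrete exponential attractor $\mathbf{M_0}=\mathbf{A}\cup\mathbf{E_0}=\cl_{V}\mathbf{E_0}\subseteq\mathbf{B}'\subseteq\mathbf{B}$ with rate of attraction $\xi\in(0,\tfrac{1}{T}\ln\tfrac{1}{\eta+\sigma})$ and $\dimf^{V}(\mathbf{M_0})\leq\log_{\frac{1}{\eta+\sigma}}\mathfrak{m}_{Z}\!\left(\tfrac{\sigma}{2\kappa}\right)$, while $\mathbf{A}=\Lambda^{V}(\cl_{V}\mathbf{E_0})$ is the global attractor, contained in $\mathbf{M_0}$. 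This yields every assertion of the theorem. There is no genuine obstacle: the single line worth writing out is the descent of quasi-stability from $\mathbf{B}$ to $\mathbf{B}'$, and the passage to $\mathbf{B}'$ is needed only to secure the $S(T)$-invariance required by Theorem~\ref{thm:QUASI} (if $\mathbf{B}$ already satisfied $S(T)\mathbf{B}\subseteq\mathbf{B}$ one could work with $\mathbf{B}$ directly).
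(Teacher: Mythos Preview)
Your proposal is correct and matches the paper's own argument: the paper states the theorem as an immediate consequence of ``Combining Theorems~\ref{thm:EXDEXPAT} and \ref{thm:QUASI} with Remark~\ref{rem:POSINV}'', which is exactly the concatenation you carry out. Your explicit remark that quasi-stability descends from $\mathbf{B}$ to the positively invariant subset $\mathbf{B}'$ (since \eqref{e:DIMSEM1}--\eqref{e:DIMSEM2} hold for all pairs in $\mathbf{B}$) is the only detail worth spelling out, and you have done so.
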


\section{Construction based on generalized smoothing property}\label{sec:CE}

In this section we address construction methods of exponential attractors for semigroups in Banach 
spaces that are based on compact embeddings. More specifically, we consider two classes of semigroups that are quasi-stable. 
The first proposition addresses semigroups considered by R.~Czaja and M.~Efendiev in \cite[Theorem 3.2]{CzEf}, and the second proposition semigroups that generalize the setting used by A.~N.~Carvalho and S.~Sonner in \cite{CaSo}. 
We say that the semigroups considered in Proposition \ref{prop:czaja} satisfy the \emph{generalized smoothing property}. 
These results provide sufficient conditions for quasi-stability and will be applied in subsequent sections to verify that semigroups are quasi-stable and possess $T$-discrete exponential attactors. 

\begin{prop}\label{prop:czaja}
Let $\{S(t)\colon t\geq 0\}$ be a semigroup in a metric space $(V,d)$, $T>0$ and $B$ be a~subset of $V$. 
Let $Y,Z$ be normed spaces such that $Z$ is compactly embedded into $Y$. Assume that there exists a map $M\colon B\to Z$ and parameters $\eta\in[0,1)$, $\mu>0$ and $\kappa>0$ such that for all $x,y\in B$
\begin{equation}\label{e:CZAJA1}
\|Mx-My\|_Z\leq \kappa d(x,y),
\end{equation}
\begin{equation}\label{e:CZAJA2}
d(S(T)x,S(T)y)\leq \eta d(x,y)+\mu\|Mx-My\|_Y
\end{equation} 
holds. Then, $\{S(t)\colon t\geq 0\}$ is quasi-stable on $B$ at time $T$ with parameters $(\eta,\kappa\mu)$ and the compact seminorm $\mathfrak{n}_{Z}(x)=\|x\|_Y$ on $Z$.
\end{prop}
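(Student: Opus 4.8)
The plan is to check the two defining inequalities of quasi-stability (Definition~\ref{defn:QUASISTABILITY}) directly, taking the auxiliary normed space to be $Z$ itself, the compact seminorm to be $\mathfrak{n}_Z(\cdot)=\|\cdot\|_Y$, and the Lipschitz map to be a rescaling of $M$.

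First I would verify that $\mathfrak{n}_Z$ is a compact seminorm on $Z$. Since $Z$ is continuously embedded into $Y$, the quantity $\mathfrak{n}_Z(z)=\|z\|_Y$ is well-defined for $z\in Z$ and inherits from $\|\cdot\|_Y$ the properties of a seminorm (nonnegativity, absolute homogeneity, the triangle inequality). Compactness of $\mathfrak{n}_Z$ is exactly where the compactness of the embedding enters: given a bounded sequence $z_k\in Z$, by the compact embedding $Z\hookrightarrow Y$ there is a subsequence $z_{k_j}$ converging in $Y$, hence Cauchy in $Y$, so $\mathfrak{n}_Z(z_{k_j}-z_{k_l})=\|z_{k_j}-z_{k_l}\|_Y\to 0$ as $j,l\to\infty$.

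Next I would set $K:=\mu M\colon B\to Z$. From \eqref{e:CZAJA1} we get
\[
\|Kx-Ky\|_Z=\mu\|Mx-My\|_Z\leq\kappa\mu\, d(x,y),\quad x,y\in B,
\]
which is \eqref{e:DIMSEM1} with constant $\kappa\mu$. Since $Kx-Ky=\mu(Mx-My)$, we also have $\mathfrak{n}_Z(Kx-Ky)=\|Kx-Ky\|_Y=\mu\|Mx-My\|_Y$, so \eqref{e:CZAJA2} becomes precisely
\[
d(S(T)x,S(T)y)\leq\eta\, d(x,y)+\mathfrak{n}_Z(Kx-Ky),\quad x,y\in B,
\]
which is \eqref{e:DIMSEM2}. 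With $\eta\in[0,1)$ and $\kappa\mu>0$ this establishes that $\{S(t)\colon t\geq 0\}$ is quasi-stable on $B$ at time $T$ with parameters $(\eta,\kappa\mu)$ with respect to $\mathfrak{n}_Z$.

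There is no genuine obstacle in this argument; the only point that needs to be spotted is the rescaling of $M$ by the factor $\mu$, which absorbs the coefficient in \eqref{e:CZAJA2} into the seminorm term of \eqref{e:DIMSEM2} at the expense of enlarging the Lipschitz constant in \eqref{e:DIMSEM1} from $\kappa$ to $\kappa\mu$, together with the elementary observation that a compact embedding $Z\hookrightarrow Y$ is exactly what turns $\|\cdot\|_Y$ into a compact seminorm on $Z$.
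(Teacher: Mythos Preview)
Your proof is correct and follows essentially the same approach as the paper: both set $K=\mu M$, verify that $\mathfrak{n}_Z(\cdot)=\|\cdot\|_Y$ is a compact seminorm on $Z$ via the compact embedding, and check the two quasi-stability inequalities directly. Your version supplies a bit more detail on the seminorm verification, but the argument is otherwise identical.
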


\begin{proof}
We observe that 
$\mathfrak{n}_{Z}(x)=\|x\|_Y$ is a compact seminorm on $Z$ since $Z$ is compactly embedded into $Y$. 
Moreover, \eqref{e:CZAJA1} and \eqref{e:CZAJA2} imply that 
\begin{align*}
\|\mu Mx-\mu My\|_Z=\mu \|Mx-My\|_Z\leq \mu\kappa d(x,y).
\end{align*}
Hence, the semigroup is quasi-stable according to Definition \ref{defn:QUASISTABILITY} with $\mathfrak{n}_{Z}(x)=\|x\|_Y, K=\mu M$ and parameters $(\eta,\kappa\mu)$. 
\end{proof}

Considering in Proposition~\ref{prop:czaja} a nonempty subset $V$ of a normed space $X$ and taking $Z=X$, $M=\kappa I$ and $\mu=1$, we obtain the following result.

\begin{prop}\label{prop:czaja_mod}
Let $\{S(t)\colon t\geq 0\}$ be a semigroup on a nonempty subset $V$ of a normed space $X$, $T>0$ and $B$ be a subset of $V$. 
Let $Y$ be a normed space such that $X$ is compactly embedded into $Y$. Assume that there exist parameters $\eta\in[0,1)$ and $\kappa>0$ such that for all $x,y\in B$ 
\begin{align*}
\begin{split}
\|S(T)x-S(T)y\|_X&\leq \eta\|x-y\|_X+\kappa\|x-y\|_Y
\end{split}
\end{align*}
holds. Then, $\{S(t)\colon t\geq 0\}$ is quasi-stable on $B$ with parameters $(\eta,\kappa)$  and the compact seminorm $\mathfrak{n}_{X}(x)=\|x\|_Y$ on $X$.
\end{prop}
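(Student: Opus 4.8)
The plan is to derive Proposition~\ref{prop:czaja_mod} as a direct special case of Proposition~\ref{prop:czaja}, exactly as the sentence preceding the statement indicates. First I would set up the specialization: take $V$ to be the given nonempty subset of the normed space $X$, regard $(V,d)$ as a metric space with $d(x,y)=\|x-y\|_X$, and in Proposition~\ref{prop:czaja} choose $Z=X$ (which is compactly embedded into $Y$ by hypothesis), $\mu=1$, and $M=\kappa I\colon B\to X$, i.e.\ $Mx=\kappa x$. I would also need the constant playing the role of ``$\kappa$'' in Proposition~\ref{prop:czaja} — call it $\kappa'$ to avoid clash — and the point is to pick it so that \eqref{e:CZAJA1} holds for $M=\kappa I$; since $\|Mx-My\|_X=\kappa\|x-y\|_X=\kappa\, d(x,y)$, the Lipschitz bound \eqref{e:CZAJA1} holds with $\kappa'=\kappa$.

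Next I would verify hypothesis \eqref{e:CZAJA2} of Proposition~\ref{prop:czaja} with these choices. With $\mu=1$ and $Mx=\kappa x$ we have $\|Mx-My\|_Y=\kappa\|x-y\|_Y$, so \eqref{e:CZAJA2} reads
\[
\|S(T)x-S(T)y\|_X\leq \eta\|x-y\|_X+\kappa\|x-y\|_Y,
\]
which is precisely the assumed inequality in Proposition~\ref{prop:czaja_mod}. Thus all hypotheses of Proposition~\ref{prop:czaja} are met, and its conclusion gives that $\{S(t)\colon t\geq 0\}$ is quasi-stable on $B$ at time $T$ with parameters $(\eta,\kappa'\mu)=(\eta,\kappa)$ and compact seminorm $\mathfrak{n}_{Z}(x)=\|x\|_Y$ on $Z=X$. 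Since $Z=X$, this is the compact seminorm $\mathfrak{n}_{X}(x)=\|x\|_Y$ on $X$, matching the statement.

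There is essentially no obstacle here — the proof is a bookkeeping exercise in matching variables — but the one point that deserves a word is the compactness of the seminorm: $\mathfrak{n}_{X}(x)=\|x\|_Y$ is a compact seminorm on $X$ precisely because $X$ is compactly embedded into $Y$, so every $X$-bounded sequence has a $Y$-Cauchy subsequence; this is exactly the same observation used in the proof of Proposition~\ref{prop:czaja}, inherited through the specialization. I would therefore keep the write-up to two or three sentences: state the choices $Z=X$, $M=\kappa I$, $\mu=1$, observe that the two displayed hypotheses of Proposition~\ref{prop:czaja} reduce to the Lipschitz identity for $\kappa I$ and to the assumed inequality respectively, and invoke Proposition~\ref{prop:czaja} to conclude.
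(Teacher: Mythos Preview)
Your proposal is correct and follows exactly the paper's approach: the paper's proof is literally the sentence preceding the statement, namely specializing Proposition~\ref{prop:czaja} with $Z=X$, $M=\kappa I$, and $\mu=1$, which is precisely what you have spelled out and verified.
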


Note that the map $S(T)$ in Propositions~\ref{prop:czaja} and \ref{prop:czaja_mod} is Lipschitz continuous on $B$. Hence, by Remark~\ref{rem:closed} we can either assume asymptotic closedness of the semigroup or closedness of the absorbing set to conclude the existence of $T$-discrete  exponential attractors.
Combining Propositions~\ref{prop:czaja},~\ref{prop:czaja_mod} with Theorems~\ref{thm:EXDEXPAT},~\ref{thm:QUASI} and Remark~\ref{rem:closed}, we get the following theorem.

\begin{thm}\label{thm:expattr_CE}
Let $\{S(t)\colon t\geq 0\}$ be a semigroup on a nonempty closed subset $V$ of a Banach space $X$, $T>0$  and $\mathbf{B}\subseteq V$ be a bounded absorbing set for the semigroup. Moreover, let $\{S(t)\colon t\geq 0\}$ be asymptotically closed or $\mathbf{B}$ be closed. 

If the semigroup satisfies the hypotheses of Proposition \ref{prop:czaja} or Proposition \ref{prop:czaja_mod} on $\mathbf{B}$  then for any $\sigma\in(0,1-\eta)$ there exists a~$T$-discrete exponential attractor  $\mathbf{M_0}\subseteq \mathbf{B}$ for the semigroup and its fractal dimension is bounded by 
\begin{equation*}
\dimf^{V}(\mathbf{M_0})\leq 
\begin{cases}
\log_{\frac{1}{\eta+\sigma}}m_{\|\cdot\|_Y}\left(\overline{B}^Z(0,1),\frac{\sigma}{2\mu\kappa}\right)& \text{if the hypotheses of Proposition \ref{prop:czaja} hold},\\
\log_{\frac{1}{\eta+\sigma}}m_{\|\cdot\|_Y}\left(\overline{B}^X(0,1),\frac{\sigma}{2\kappa}\right)& \text{if the hypotheses of Proposition \ref{prop:czaja_mod} hold}.
\end{cases}
\end{equation*}
If the semigroup is asymptotically closed, it has a global attractor $\mathbf{A}$  contained in $\mathbf{M_0}$.
\end{thm}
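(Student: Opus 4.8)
The plan is to derive Theorem~\ref{thm:expattr_CE} as a direct synthesis of the results already established in the excerpt, so the argument is essentially a bookkeeping exercise that tracks constants through three earlier statements. First I would note that the hypotheses of Proposition~\ref{prop:czaja} (respectively Proposition~\ref{prop:czaja_mod}) applied on $\mathbf{B}$ yield that the semigroup is quasi-stable on $\mathbf{B}$ at time $T$ with respect to the compact seminorm $\mathfrak{n}_{Z}(x)=\|x\|_Y$ (respectively $\mathfrak{n}_{X}(x)=\|x\|_Y$) and parameters $(\eta,\kappa\mu)$ (respectively $(\eta,\kappa)$). Since $\mathbf{B}$ is a bounded absorbing set, Remark~\ref{rem:POSINV} lets me replace $\mathbf{B}$ by a positively invariant bounded absorbing set, still contained in the original $\mathbf{B}$ and still quasi-stable (with a possibly larger constant $a$ in the covering condition — which does not affect the dimension bound or the attraction rate).

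Next I would invoke Theorem~\ref{thm:QUASI} with this positively invariant absorbing set: for any $\sigma\in(0,1-\eta)$ the covering condition \eqref{e:CONDBALLSSEMB} holds with $q=\eta+\sigma\in(0,1)$ and $h=\mathfrak{m}_{Z}\!\left(\tfrac{\sigma}{2\kappa\mu}\right)=m_{\|\cdot\|_Y}\!\left(\overline{B}^{Z}(0,1),\tfrac{\sigma}{2\mu\kappa}\right)$ in the case of Proposition~\ref{prop:czaja}, and with $h=m_{\|\cdot\|_Y}\!\left(\overline{B}^{X}(0,1),\tfrac{\sigma}{2\kappa}\right)$ in the case of Proposition~\ref{prop:czaja_mod} (here $Z=X$). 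Having the covering condition in hand, I would then apply either Theorem~\ref{thm:EXDEXPAT} (if the semigroup is assumed asymptotically closed) or Remark~\ref{rem:closed} (if $\mathbf{B}$ is assumed closed), using that $S(T)$ is Lipschitz continuous on $\mathbf{B}$ by \eqref{e:CZAJA1}--\eqref{e:CZAJA2} and hence closed on $\cl_V\mathbf{B}=\mathbf{B}$. Either route produces a $T$-discrete exponential attractor $\mathbf{M_0}\subseteq\mathbf{B}$ with rate of attraction $\xi\in(0,\tfrac{1}{T}\ln\tfrac{1}{q})=(0,\tfrac{1}{T}\ln\tfrac{1}{\eta+\sigma})$ and fractal dimension bounded by $\dimf^{V}(\mathbf{M_0})\leq\log_{1/q}h=\log_{\frac{1}{\eta+\sigma}}h$, which is exactly the claimed estimate in each of the two cases. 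Finally, in the asymptotically closed case, Theorem~\ref{thm:EXDEXPAT} also gives that the global attractor $\mathbf{A}=\Lambda^{V}(\mathbf{M_0})$ exists and satisfies $\mathbf{A}\subseteq\mathbf{M_0}$.

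There is essentially no obstacle here beyond care with two small points. The first is making sure the transition to a positively invariant absorbing set via Remark~\ref{rem:POSINV} is legitimate when one only assumes quasi-stability on the \emph{original} $\mathbf{B}$: one needs that quasi-stability is inherited by the smaller set $\bigcup_{t\geq t_{\mathbf B_0}}S(t)\mathbf{B_0}$, which is immediate since the defining inequalities \eqref{e:CZAJA1}--\eqref{e:CZAJA2} (and hence \eqref{e:DIMSEM1}--\eqref{e:DIMSEM2}) hold pointwise and are therefore stable under passing to subsets, and since Theorem~\ref{thm:QUASI} itself already handles the positively invariant bounded set directly (it assumes $S(T)B\subseteq B$). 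The second is correctly identifying the auxiliary space and the constant in the seminorm: for Proposition~\ref{prop:czaja} the seminorm lives on $Z$ with $K=\mu M$, so the relevant quantity is $\mathfrak{m}_{Z}(\tfrac{\sigma}{2(\kappa\mu)})$, whereas for Proposition~\ref{prop:czaja_mod} one has $Z=X$, $K=\kappa I$, $\mu=1$, giving $\mathfrak{m}_{X}(\tfrac{\sigma}{2\kappa})=m_{\|\cdot\|_Y}(\overline{B}^{X}(0,1),\tfrac{\sigma}{2\kappa})$. Once these identifications are made, the theorem follows by concatenation of the cited results.
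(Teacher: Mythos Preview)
Your proposal is correct and matches the paper's own approach, which simply states that the theorem follows by combining Propositions~\ref{prop:czaja} and~\ref{prop:czaja_mod} with Theorems~\ref{thm:EXDEXPAT}, \ref{thm:QUASI} and Remark~\ref{rem:closed}. Your additional care about passing to a positively invariant absorbing subset via Remark~\ref{rem:POSINV} and about the Lipschitz continuity of $S(T)$ (needed for Remark~\ref{rem:closed} when $\mathbf{B}$ is closed) is well placed and consistent with the paper's reasoning.
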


Theorem \ref{thm:expattr_CE} generalizes existence results for exponential attractors in \cite{CzEf} and 
\cite{CaSo}. In particular, using the concept of quasi-stability allows that the constant $\eta\in[0,1)$ while in previous constructions it was assumed that $\eta\in[0,1/2),$ see also the remarks at the end of Section \ref{sec:SP}.

\section{Construction based on smoothing property}\label{sec:SP}

In \cite{EfMiZe} M.~Efendiev, A.~Miranville and S.~Zelik applied the smoothing property to construct exponential attractors for semigroups in Banach spaces. We show that the smoothing property implies the generalized smoothing property and hence, quasi-stability of the semigroup which allows us to generalize previous existence results for exponential attractors using the smoothing property. 
The smoothing property is based on the Lipschitz continuity of an operator between two normed spaces and the compact embedding of these spaces. In the following definition we include two different settings.  
Either the compactly embedded space is a subspace of the phase space, or the phase space is compactly embedded into an auxiliary normed space. Both cases lead to estimates for the fractal dimension of the exponential attractor which are determined by the $\eps$-capacity properties of the compact embedding, see Theorem \ref{thm:expattr_smooth} and compare to e.g. \cite{EfMiZe,CCM,CaSo,sonner2012}.   

\begin{defn}\label{defn:smooth}
We say that a semigroup $\{S(t)\colon t\geq0\}$ on a nonempty subset $V$ of a~normed space $(X,\norm{\cdot}_{X})$
satisfies the \textit{smoothing property on a subset $B$ of $V$ at time $T>0$ with parameters $(\eta,\kappa)$} if $S(T)=C(T)+M(T)$, where 
the map $C(T)$ is a contraction in $X$, i.e., there exists $\eta\in [0,1)$ such that
\begin{equation}\label{eq:contraction}
\norm{C(T)x-C(T)y}_{X}\leq \eta\norm{x-y}_{X},\ x,y\in B,
\end{equation}
and one of the following two properties holds:
\begin{itemize}
\item[(i)] 
$M(T)\colon B\to Z$, where $Z$ is an auxiliary normed space compactly embedded into $X$, and there exists $\kappa>0$ such that 
\begin{equation}\label{eq:smoothingRadek}
\norm{M(T)x-M(T)y}_{Z}\leq \kappa\norm{x-y}_{X},\ x,y\in B,
\end{equation} 
\item[(ii)] 
$X$ is compactly embedded into a normed space $Y$ and there exists $\kappa>0$  such that 
 \begin{equation}\label{eq:smoothingStefanie}
\norm{M(T)x-M(T)y}_{X}\leq \kappa\norm{x-y}_{Y},\ x,y\in B.
\end{equation} 
\end{itemize}
\end{defn}

\begin{prop}\label{prop:smoothqs}
Let $\{S(t)\colon t\geq 0\}$ be a semigroup on a nonempty subset $V$ of a normed space $(X,\norm{\cdot}_{X})$, $T>0$ 
and let $B$ be a bounded subset of $V$.
If the semigroup satisfies the smoothing property on $B$ at time $T$ with parameters $(\eta,\kappa)$, then it satisfies the generalized smoothing property and hence, it is quasi-stable on $B$ at time $T$ with parameters $(\eta,\kappa)$.

Moreover, if $S(T)B\subseteq B$ then for any $\sigma\in (0,1-\eta)$ the covering condition \eqref{e:CONDBALLSSEMB2} holds with $q=\eta+\sigma$ and
\begin{align*}
h&=
\begin{cases}
m_{\|\cdot\|_X}\left(\overline{B}^Z(0,1),\frac{\sigma}{2\kappa}\right)& \text{if \eqref{eq:smoothingRadek} holds},\\
m_{\|\cdot\|_Y}\left(\overline{B}^X(0,1),\frac{\sigma}{2\kappa}\right)& \text{if \eqref{eq:smoothingStefanie} holds}.
\end{cases}
\end{align*}
\end{prop}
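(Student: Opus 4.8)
The plan is to show that the smoothing property is a special case of the generalized smoothing property (Proposition~\ref{prop:czaja}) and then simply invoke the machinery already developed. First I would treat case (i): set $M := M(T)\colon B\to Z$, which by \eqref{eq:smoothingRadek} satisfies $\|Mx-My\|_Z\leq\kappa\,d(x,y)$ with $d$ the metric induced by $\|\cdot\|_X$, giving exactly \eqref{e:CZAJA1} with parameter $\kappa$. For \eqref{e:CZAJA2} I would use $S(T)=C(T)+M(T)$ together with the triangle inequality and the contraction estimate \eqref{eq:contraction}:
\begin{align*}
\|S(T)x-S(T)y\|_X&\leq\|C(T)x-C(T)y\|_X+\|M(T)x-M(T)y\|_X\\
&\leq\eta\|x-y\|_X+\|M(T)x-M(T)y\|_X.
\end{align*}
Since $Z$ is compactly embedded into $X$, the embedding is in particular bounded, so there is a constant $c>0$ with $\|w\|_X\leq c\|w\|_Z$ for $w\in Z$; hence $\|M(T)x-M(T)y\|_X\leq c\|M(T)x-M(T)y\|_Z$, which is the form needed. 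Actually, to match the statement cleanly with $Y=X$ and the seminorm $\mathfrak{n}_Z(x)=\|x\|_X$, I would instead apply Proposition~\ref{prop:czaja} with $Y=X$, $\mu=1$, observing that \eqref{e:CZAJA2} holds literally as displayed above with $\|M(T)x-M(T)y\|_Y=\|M(T)x-M(T)y\|_X$. This yields quasi-stability with parameters $(\eta,\kappa)$ and compact seminorm $\mathfrak{n}_Z(\cdot)=\|\cdot\|_X$, since the compact embedding $Z\hookrightarrow X$ guarantees $\|\cdot\|_X$ is a compact seminorm on $Z$.

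For case (ii) I would apply Proposition~\ref{prop:czaja_mod} directly: the hypothesis \eqref{eq:smoothingStefanie} combined with \eqref{eq:contraction} gives, again by the triangle inequality,
\begin{equation*}
\|S(T)x-S(T)y\|_X\leq\eta\|x-y\|_X+\kappa\|x-y\|_Y,\quad x,y\in B,
\end{equation*}
which is precisely the hypothesis of Proposition~\ref{prop:czaja_mod} with compact embedding $X\hookrightarrow Y$. That proposition then delivers quasi-stability on $B$ with parameters $(\eta,\kappa)$ and compact seminorm $\mathfrak{n}_X(x)=\|x\|_Y$. In both cases, ``generalized smoothing property'' is exactly the conclusion that the hypotheses of Proposition~\ref{prop:czaja} (or its specialization) hold, so nothing further is needed for that part.

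For the final assertion about the covering condition, I would simply note that under the additional hypothesis $S(T)B\subseteq B$ and with $B$ bounded, Theorem~\ref{thm:QUASI} applies to the quasi-stable semigroup just exhibited. In case (i), the auxiliary normed space in Definition~\ref{defn:QUASISTABILITY} is $Z$, the map is $K=M(T)$, the compact seminorm is $\mathfrak{n}_Z(\cdot)=\|\cdot\|_X$, and the parameters are $(\eta,\kappa)$; hence Theorem~\ref{thm:QUASI} gives \eqref{e:CONDBALLSSEMB2} with $q=\eta+\sigma$ and $h=\mathfrak{m}_Z(\tfrac{\sigma}{2\kappa})=m_{\mathfrak{n}_Z}(\overline{B}^Z(0,1),\tfrac{\sigma}{2\kappa})=m_{\|\cdot\|_X}(\overline{B}^Z(0,1),\tfrac{\sigma}{2\kappa})$, as claimed. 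In case (ii), the roles are played by $X$ in place of the phase space structure, with $K=I$, compact seminorm $\mathfrak{n}_X(\cdot)=\|\cdot\|_Y$, and the same computation yields $h=m_{\|\cdot\|_Y}(\overline{B}^X(0,1),\tfrac{\sigma}{2\kappa})$. I do not anticipate a serious obstacle here; the only point requiring a moment's care is bookkeeping the identification of the auxiliary space, the map $K$, and the seminorm in each of the two cases so that the constants in Theorem~\ref{thm:QUASI} come out in the stated form, together with the harmless remark that boundedness of $B$ plus compactness of the embedding makes the relevant pseudometric space precompact.
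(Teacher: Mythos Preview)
Your proposal is correct and follows essentially the same route as the paper: in case~(i) apply Proposition~\ref{prop:czaja} with $Y=X$, $\mu=1$, $M=M(T)$, in case~(ii) apply Proposition~\ref{prop:czaja_mod}, and then invoke Theorem~\ref{thm:QUASI} for the covering condition. One small bookkeeping slip: in case~(ii) the map $K$ furnished by Proposition~\ref{prop:czaja_mod} is $K=\kappa I$ (not $K=I$), which is what makes both \eqref{e:DIMSEM1} and \eqref{e:DIMSEM2} hold with the parameter $\kappa$ and the seminorm $\|\cdot\|_Y$; your stated value of $h$ is nonetheless correct.
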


\begin{proof}
Let $x,y\in B$ and assume that the smoothing property holds with \eqref{eq:smoothingRadek}. Then, using \eqref{eq:contraction} we obtain 
\begin{align*}
\|S(T)x-S(T)y\|_X&\leq \|C(T)x-C(T)y\|_X+\|M(T)x-M(T)y\|_X\\
&\leq \eta\|x-y\|_X+\|M(T)x-M(T)y\|_X.
\end{align*}
Moreover, by \eqref{eq:smoothingRadek} $Z$ is compactly embedded into $X$ and
\begin{align*}
\|M(T)x-M(T)y\|_Z\leq 
\kappa\|x-y\|_X.
\end{align*} 
This shows that the hypotheses of Proposition~\ref{prop:czaja} are satisfied with $Y=X$, $\mu=1$ and $M=M(T)$ and hence, the semigroup is quasi-stable with parameters $(\eta,\kappa)$ and the compact seminorm $\mathfrak{n}_{Z}(x)=\|x\|_X$ on $Z$. 

Let now $x,y\in B$ and assume that the smoothing property holds with \eqref{eq:smoothingStefanie}. Then, we get
\begin{align*}
\|S(T)x-S(T)y\|_X&\leq \|C(T)x-C(T)y\|_X+\|M(T)x-M(T)y\|_X\\
&\leq \eta\|x-y\|_X+\kappa\|x-y\|_Y.
\end{align*}
Hence, the hypotheses of Proposition~\ref{prop:czaja_mod} are satisfied and we conclude that 
the semigroup is quasi-stable with parameters $(\eta,\kappa)$ and the compact seminorm $\mathfrak{n}_{X}(x)=\|x\|_Y$ on $X$.
 
Finally, Theorem \ref{thm:QUASI} implies that for any $\sigma\in (0,1-\eta)$ the covering condition holds with the stated parameters $q$ and $h$.
\end{proof}

Combining Proposition \ref{prop:smoothqs}, Theorems~\ref{thm:EXDEXPAT},~\ref{thm:QUASI} and Remarks~\ref{rem:POSINV},~\ref{rem:closed}, we obtain the following existence result for $T$-discrete exponential attractors. 

\begin{thm}\label{thm:expattr_smooth}
Let $\{S(t)\colon t\geq 0\}$ be a semigroup on a nonempty closed subset $V$ of a Banach space $(X,\norm{\cdot}_{X})$, $T>0$  and $\mathbf{B}\subseteq V$ be a bounded absorbing set for the semigroup. Moreover, let $\{S(t)\colon t\geq 0\}$ be asymptotically closed or $\mathbf{B}$ be closed. 

If the semigroup satisfies the smoothing property on $\mathbf{B}$ at time $T$ with parameters $(\eta,\kappa)$, then for any $\sigma\in(0,1-\eta)$ there exists a~$T$-discrete exponential attractor  $\mathbf{M_0}\subseteq \mathbf{B}$ for the semigroup and
\begin{equation*}
\dimf^{V}(\mathbf{M_0})\leq 
\begin{cases}
\log_{\frac{1}{\eta+\sigma}}m_{\|\cdot\|_X}\left(\overline{B}^Z(0,1),\frac{\sigma}{2\kappa}\right)& \text{if \eqref{eq:smoothingRadek} holds},\\
\log_{\frac{1}{\eta+\sigma}}m_{\|\cdot\|_Y}\left(\overline{B}^X(0,1),\frac{\sigma}{2\kappa}\right)& \text{if \eqref{eq:smoothingStefanie} holds}.
\end{cases}
\end{equation*}
If the semigroup is asymptotically closed, then it has a global attractor $\mathbf{A}$ contained in $\mathbf{M_0}$.
\end{thm}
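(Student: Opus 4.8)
The plan is to assemble this result directly from the pipeline of earlier results rather than prove anything new from scratch. The statement is precisely a composition: the smoothing property $\Rightarrow$ quasi-stability (Proposition~\ref{prop:smoothqs}) $\Rightarrow$ the covering condition \eqref{e:CONDBALLSSEMB2} (Theorem~\ref{thm:QUASI}, applied on a positively invariant absorbing set) $\Rightarrow$ the existence of a $T$-discrete exponential attractor with the stated dimension bound and rate of attraction (Theorem~\ref{thm:EXDEXPAT}, or Remark~\ref{rem:closed} in the case where $\mathbf{B}$ is assumed closed but the semigroup is not known to be asymptotically closed). So the proof is essentially bookkeeping: check that the hypotheses of each cited result are met, and track how the parameters $(\eta,\kappa,\sigma)$ propagate into $q=\eta+\sigma$ and the value of $h$.

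First I would reduce to a positively invariant absorbing set: by Remark~\ref{rem:POSINV}, replace $\mathbf{B}$ with $\bigcup_{t\geq t_{\mathbf{B}}}S(t)\mathbf{B}\subseteq\mathbf{B}$, which is still a bounded absorbing set, is positively invariant (so in particular $S(T)\mathbf{B}\subseteq\mathbf{B}$), and still satisfies the smoothing property on it with the same parameters $(\eta,\kappa)$ since it is a subset of the original $\mathbf{B}$; the covering condition is preserved with a possibly larger $a$. Next, invoke Proposition~\ref{prop:smoothqs}: the smoothing property on this set implies quasi-stability at time $T$ with parameters $(\eta,\kappa)$ and the appropriate compact seminorm ($\mathfrak{n}_Z(x)=\|x\|_X$ on $Z$ in case \eqref{eq:smoothingRadek}, or $\mathfrak{n}_X(x)=\|x\|_Y$ on $X$ in case \eqref{eq:smoothingStefanie}). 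Then Theorem~\ref{thm:QUASI} gives, for any $\sigma\in(0,1-\eta)$, the covering condition \eqref{e:CONDBALLSSEMB2} with $q=\eta+\sigma\in(0,1)$ and $h=\mathfrak{m}_Z\big(\tfrac{\sigma}{2\kappa}\big)$, which unwinds to $m_{\|\cdot\|_X}(\overline{B}^Z(0,1),\tfrac{\sigma}{2\kappa})$ in case \eqref{eq:smoothingRadek} and $m_{\|\cdot\|_Y}(\overline{B}^X(0,1),\tfrac{\sigma}{2\kappa})$ in case \eqref{eq:smoothingStefanie}.

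Finally I would feed the covering condition into the existence theorem. If the semigroup is asymptotically closed, Theorem~\ref{thm:EXDEXPAT} applies directly (the phase space $V$ is a closed subset of the Banach space $X$, hence complete): it produces the $T$-discrete exponential attractor $\mathbf{M_0}=\mathbf{A}\cup\mathbf{E_0}=\cl_V\mathbf{E_0}\subseteq\mathbf{B}$, with rate of attraction $\xi\in(0,\tfrac{1}{T}\ln\tfrac{1}{q})=(0,\tfrac{1}{T}\ln\tfrac{1}{\eta+\sigma})$, dimension bound $\dimf^V(\mathbf{M_0})\leq\log_{1/q}h$ which is exactly the claimed case distinction, and with $\mathbf{A}$ the global attractor contained in $\mathbf{M_0}$. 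If instead $\mathbf{B}$ is assumed closed (but asymptotic closedness is not given), I would appeal to Remark~\ref{rem:closed} together with the observation — already made in the text just before Theorem~\ref{thm:expattr_CE} and equally valid here — that $S(T)$ is Lipschitz on $\mathbf{B}$ under the smoothing property, hence closed on $\cl_V\mathbf{B}=\mathbf{B}$; this yields the same $\mathbf{M_0}\subseteq\mathbf{B}$ with the same rate and dimension bound, but without asserting the existence of a global attractor.

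There is no real obstacle here; the only point requiring a little care is the parameter arithmetic — making sure that the compact seminorm chosen in Proposition~\ref{prop:smoothqs} is the one that Theorem~\ref{thm:QUASI} then uses to define $h$, so that the two cases \eqref{eq:smoothingRadek} and \eqref{eq:smoothingStefanie} match up with $m_{\|\cdot\|_X}(\overline{B}^Z(0,1),\cdot)$ and $m_{\|\cdot\|_Y}(\overline{B}^X(0,1),\cdot)$ respectively. I would also note explicitly that $V$ being a closed subset of the Banach space $X$ makes $(V,d)$ complete with $d$ the metric induced by $\|\cdot\|_X$, which is the hypothesis Theorem~\ref{thm:EXDEXPAT} requires.
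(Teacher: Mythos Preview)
Your proposal is correct and follows exactly the approach the paper takes: the paper does not give a separate proof of this theorem but simply states it as the combination of Proposition~\ref{prop:smoothqs}, Theorems~\ref{thm:EXDEXPAT} and~\ref{thm:QUASI}, and Remarks~\ref{rem:POSINV} and~\ref{rem:closed}. Your bookkeeping of the parameters and the two cases (asymptotically closed versus $\mathbf{B}$ closed) matches the intended pipeline precisely.
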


\begin{rem}
In previous constructions of exponential attractors based on the smoothing property it was assumed in \eqref{eq:contraction} that the contraction rate $\eta\in[0,1/2)$, see e.g. \cite{EfMiZe, CzEf, CCM, CaSo}. This assumption can be weakened to $\eta\in[0,1)$ by using the framework of quasi-stability and Lemma \ref{lem:FUNDAMENTAL} which is based on minimal coverings by sets of a certain diameter. On the contrary, in \cite{EfMiZe, CzEf, CCM, CaSo} the smoothing property is used to construct coverings of iterates of the absorbing set under the time evolution of the semigroup by balls with centers that lie in the set which requires the more restrictive assumption $\eta\in[0,1/2)$, see also \cite{chueshov}.

The estimate for the fractal dimension of the exponential attractor in Theorem \ref{thm:expattr_smooth} is determined by properties of the compact embedding of the spaces $Z$ and $X$, and $X$ and $Y$, respectively. We recall that $m_{\|\cdot\|_Y}\left(\overline{B}^X(0,1),\frac{\sigma}{2\kappa}\right)$ denotes the maximal cardinality of subsets of $\overline{B}^X(0,1)$ that are $\frac{\sigma}{2\kappa}$-distinguishable in $Y$. 
If $X$ is compactly embedded into $Y$ the $\varepsilon$-capacity of this embedding is defined as 
$$\mathcal{C}_\varepsilon(X,Y)=\log_2 \left(m_{\|\cdot\|_Y}\left(\overline{B}^X(0,1),\varepsilon\right)\right),$$
which is closely related to the $\varepsilon$-entropy of the embedding,
$$\mathcal{H}_\varepsilon(X,Y)=\log_2 \left(N^Y\left(\overline{B}^X(0,1),\varepsilon\right)\right).$$
These concepts were introduced by A.~N.~Kolmogorov and V.~M.~Tikhomirov in \cite{KoTi}. 
For certain function spaces explicit estimates are known for the $\varepsilon$-capacity and $\varepsilon$-entropy which would yield explicit estimates of the fractal dimension of the exponential attractor in Theorem \ref{thm:expattr_smooth}. While the constructions of exponential attractors in \cite{EfMiZe, CzEf, CCM, CaSo, sonner2012} lead to estimates for the fractal dimension that are determined by the $\varepsilon$-entropy of the corresponding embedding, here we use the quasi-stability and obtain estimates in terms of the $\varepsilon$-capacity. 
\end{rem}

\section{Construction based on squeezing property}\label{sec:SQ}

In this section, we discuss semigroups in Banach spaces satisfying a squeezing property with respect to a given finite-dimensional normed space. Originally, the squeezing property was considered for semigroups in Hilbert spaces using an orthogonal projection $P$ onto a~finite-dimensional subspace. C. Foias and R. Temam first introduced the squeezing property in \cite{FoTe0}, see also \cite{EFNT94} and \cite[p.~101]{chueshov}, and it was the method applied in the first existence proof of exponential attractors in \cite{EFNT94}. 

Here, we introduce a generalized squeezing property and extend earlier approaches to a~Banach space setting. 
In particular, we will consider a map $P\colon X\to X_n$, possibly nonlinear, with values in a finite-dimensional space $X_n$. This setting allows metric projections in uniformly convex Banach spaces, see \cite[p.~392]{Brezis11}, as well as bounded linear operators $P\in\mathcal{L}(X,X_n)$, including orthogonal projections in Hilbert spaces.
We will show that semigroups satisfying the generalized squeezing property are quasi-stable. Consequently, the covering condition \eqref{e:CONDBALLSSEMB} holds which implies the existence of a $T$-discrete exponential attractor. The estimates for its fractal dimension are determined by the parameters in the condition for quasi-stability. However, exploiting the structure of the generalized squeezing property we can improve these estimates and get better bounds on the fractal dimension than the ones obtained in previous sections.  

\begin{defn}\label{defn:BANACHSQU}
We say that a semigroup $\{S(t)\colon t\geq 0\}$ on a nonempty subset $V$ of a normed space $(X,\norm{\cdot}_{X})$
is \emph{squeezing} on a subset $B$ of $V$ at positive time $T>0$ with parameters $(n,\eta,\mu,\kappa)$ if
\begin{itemize}
\item[(a)] there exists a map $P\colon V\to X_n$ with values in a normed space $(X_n,\norm{\cdot}_{X_n})$ of dimension $n\in\N$ and constants  $\eta\in[0,1)$ and $\mu>0$ such that for any $x,y\in B$
$$\norm{S(T)x-S(T)y}_{X}>\mu\norm{PS(T)x-PS(T)y}_{X_n}$$
implies that
$$\norm{S(T)x-S(T)y}_{X}\leq\eta\norm{x-y}_{X},$$
\item[(b)] $PS(T)$ is Lipschitz continuous on $B$ with Lipschitz constant $\kappa>0$, i.e.,
\begin{equation*}
\norm{PS(T)x-PS(T)y}_{X_n}\leq \kappa\norm{x-y}_{X},\ x,y\in B.
\end{equation*}
\end{itemize}
Moreover, we say that a semigroup $\{S(t)\colon t\geq 0\}$ satisfies the \emph{generalized squeezing property} with parameters $(n,\eta,\mu,\kappa)$ if there exist $P$, $X_n$, $\mu$ and $\eta$ as in (a) such that 
\begin{equation}\label{e:BANACHSQU2}
\norm{S(T)x-S(T)y}_{X}\leq\eta\norm{x-y}_{X}+\mu\norm{PS(T)x-PS(T)y}_{X_n},\ x,y\in B,
\end{equation}
and (b) holds.
\end{defn}

\begin{rem}
The implication in (a) can be equivalently stated as an alternative: for any $x,y\in B$ either
\begin{equation*}
\norm{S(T)x-S(T)y}_{X}\leq\mu\norm{PS(T)x-PS(T)y}_{X_n}
\end{equation*}
holds or
\begin{equation*}
\norm{S(T)x-S(T)y}_{X}\leq\eta\norm{x-y}_{X}.
\end{equation*}
\end{rem}

Since in the above definitions $X_n$ is a general $n$-dimensional normed space, we will compare it with $\ell^n_2$, the space $\K^n$, $\mathbb{K}\in\{\R,\C\}$, endowed with the Euclidean norm $|\cdot|_2$, using the notion of the \emph{multiplicative Banach-Mazur distance} (see \cite[II.E.6]{wojtaszczyk}), 
\begin{equation*}
d_{BM}(X_n,\ell^n_2)=\inf\{\|\Lambda\|_{\mathcal{L}(X_n,\ell^n_2)}\|\Lambda^{-1}\|_{\mathcal{L}(\ell^n_2,X_n)}:\  \Lambda\colon X_n\to\ell^n_2\text{ is a linear isomorphism}\}.
\end{equation*} 
Recall from F.~John's Theorem (see \cite[Corollary III.B.9]{wojtaszczyk}) that
\begin{equation}\label{e:JOHN}
d_{BM}(X_n,\ell^n_2)\leq\sqrt{n},
\end{equation} 
where equality holds, for example, for $X_n=\ell^{n}_\infty$ or $X_n=\ell^{n}_1$, i.e., $\K^n$ endowed with the maximum norm or the norm of the sum of absolute values, respectively; see \cite[Proposition II.E.8]{wojtaszczyk}. Moreover, if $X_n$ is an $n$-dimensional subspace of a Hilbert space $X$, then $d_{BM}(X_n,\ell^n_2)=1$, since $X_n$ is then isometric to $\ell^n_2$. In this case, for $P$ one can take an orthogonal projection of $X$ onto $X_n$ with $\norm{P}_{\mathcal{L}(X,X_n)}=1$. 

Obviously, the squeezing property implies the generalized squeezing property. We now show that this in turn implies the quasi-stability of the semigroup and hence, the existence of a $T$-discrete exponential attractor. We obtain the estimate \eqref{e:CONDBALLSSEMB} in terms of the dimension $n$ of the space $X_n$ by the comparison of volumes in the Euclidean space.

\begin{prop}\label{prop:SQ2QUASIBANACH}
If  $\{S(t)\colon t\geq 0\}$ is a semigroup on a nonempty subset $V$ of a normed space $(X,\norm{\cdot}_{X})$ over $\mathbb{K}\in\{\R,\C\}$
satisfying the generalized squeezing property on a subset $B$ of $V$ at time $T>0$ with parameters $(n,\eta,\mu,\kappa)$, then it is quasi-stable on $B$ at time $T$ with parameters $(\eta,\kappa\mu)$ with respect to the norm $\norm{\cdot}_{X_n}$ on $X_{n}$. Moreover, if $B$ is a nonempty bounded subset of $V$ such that $S(T)B\subseteq B$ and $\sigma\in(0,1-\eta)$, then the covering condition \eqref{e:CONDBALLSSEMB2} holds with $q=\eta+\sigma$ and
\begin{equation}\label{e:SQUEEZINGWORSE}
h\leq\Bigl(1+\tfrac{4\kappa\mu d_{BM}(X_n,\ell^n_2)}{\sigma}\Bigr)^\mathbf{n},
\end{equation}
where
\begin{equation}\label{e:ELL}
\mathbf{n}=\begin{cases}
n& \text{ if }\ \mathbb{K}=\R,\\
2n&  \text{ if }\ \mathbb{K}=\C.
\end{cases}
\end{equation}
\end{prop}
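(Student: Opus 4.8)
The plan is to establish the two assertions in order. First I would verify quasi-stability: starting from the generalized squeezing inequality \eqref{e:BANACHSQU2} and the Lipschitz bound (b), I set $K := PS(T)\colon B\to X_n$ and take $\mathfrak{n}_{X_n}$ to be the norm $\norm{\cdot}_{X_n}$ itself (which is trivially a compact seminorm on the finite-dimensional space $X_n$, since bounded sequences in $X_n$ have convergent subsequences). Then \eqref{e:DIMSEM1} is exactly (b) with constant $\kappa$, and \eqref{e:DIMSEM2} with the map $x\mapsto\mu K x$ would require $d(S(T)x,S(T)y)\le\eta d(x,y)+\norm{\mu Kx-\mu Ky}_{X_n}$, which is precisely \eqref{e:BANACHSQU2}; the Lipschitz constant of $x\mapsto\mu Kx$ is $\kappa\mu$. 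Hence the semigroup is quasi-stable on $B$ at time $T$ with parameters $(\eta,\kappa\mu)$, as claimed.

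Given quasi-stability, Theorem~\ref{thm:QUASI} immediately yields the covering condition \eqref{e:CONDBALLSSEMB2} with $q=\eta+\sigma$ and $h=\mathfrak{m}_{X_n}\!\left(\tfrac{\sigma}{2\kappa\mu}\right)$, the maximal cardinality of a $\tfrac{\sigma}{2\kappa\mu}$-distinguishable subset of the closed unit ball $\overline{B}^{X_n}(0,1)$ in the $X_n$-norm. So the remaining work is the volume-counting estimate bounding this quantity by the right-hand side of \eqref{e:SQUEEZINGWORSE}. The strategy here is to transport the problem to $\ell^n_2$ (or $\ell^{2n}_2$ in the complex case) via a near-isometry. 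Choose a linear isomorphism $\Lambda\colon X_n\to\ell^n_2$ with $\norm{\Lambda}\,\norm{\Lambda^{-1}}$ close to $d_{BM}(X_n,\ell^n_2)$; after rescaling we may assume $\norm{\Lambda^{-1}}_{\mathcal{L}(\ell^n_2,X_n)}\le 1$ and $\norm{\Lambda}_{\mathcal{L}(X_n,\ell^n_2)}\le d_{BM}(X_n,\ell^n_2)=:D$ (up to an $\eps$ that can be absorbed). A $\delta$-distinguishable set of size $m$ in $\overline{B}^{X_n}(0,1)$ maps under $\Lambda$ to a set of $m$ points, each of Euclidean norm $\le D$, that are pairwise at Euclidean distance $\ge\delta$ (since $\norm{\Lambda^{-1}}\le1$ forces $\norm{x-y}_{X_n}\le|\Lambda x-\Lambda y|_2$). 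The standard packing argument — place disjoint Euclidean balls of radius $\delta/2$ around these points, all contained in the ball of radius $D+\delta/2$, and compare volumes — gives $m\le\bigl((D+\delta/2)/(\delta/2)\bigr)^{\mathbf n}=\bigl(1+2D/\delta\bigr)^{\mathbf n}$, where $\mathbf n$ is the real dimension, i.e.\ $n$ or $2n$ as in \eqref{e:ELL} (a complex $n$-dimensional space is a real $2n$-dimensional Euclidean space). With $\delta=\tfrac{\sigma}{2\kappa\mu}$ this reads $m\le\bigl(1+4\kappa\mu D/\sigma\bigr)^{\mathbf n}$, which is \eqref{e:SQUEEZINGWORSE}.

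I expect the main obstacle to be purely bookkeeping rather than conceptual: one must handle the infimum in the definition of $d_{BM}$ carefully (work with an $\Lambda$ achieving $\norm{\Lambda}\norm{\Lambda^{-1}}\le D+\eps$ and let $\eps\to0$ at the end, or note that the infimum is attained by compactness of the relevant set of operators in finite dimensions), and one must be careful which of $\norm{\Lambda}$, $\norm{\Lambda^{-1}}$ controls which direction of the distortion — the point being that $\norm{\Lambda^{-1}}$ bounds how much Euclidean distances can shrink relative to $X_n$-distances, while $\norm{\Lambda}$ bounds the image of the unit ball. The complex case is dispatched by identifying $\C^n$ with $\R^{2n}$ isometrically (the Euclidean norm $|\cdot|_2$ on $\C^n$ coincides with the Euclidean norm on $\R^{2n}$) and applying the real volume bound there, which accounts for the exponent $2n$ in \eqref{e:ELL}.
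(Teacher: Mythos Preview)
Your proposal is correct and follows essentially the same route as the paper: verify quasi-stability directly from \eqref{e:BANACHSQU2} and (b) using the map $\mu PS(T)\colon B\to X_n$ (the paper phrases this as an application of Proposition~\ref{prop:czaja} with $Z=Y=X_n$, $M=PS(T)$), then invoke Theorem~\ref{thm:QUASI} to get $h=m_{\norm{\cdot}_{X_n}}(\overline{B}^{X_n}(0,1),\tfrac{\sigma}{2\kappa\mu})$, and finally transport to $\ell^n_2$ via an isomorphism $\Lambda$ and apply the standard Euclidean packing bound $m_{|\cdot|_2}(\overline{B}^{\R^{\mathbf n}}(0,r),\eps)\le(1+2r/\eps)^{\mathbf n}$. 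The only cosmetic difference is that the paper keeps both $\norm{\Lambda}$ and $\norm{\Lambda^{-1}}$ explicit and takes the infimum over $\Lambda$ at the end, whereas you normalize so that $\norm{\Lambda^{-1}}\le 1$.
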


\begin{proof}
If the generalized squeezing property holds, the  hypotheses of Proposition \ref{prop:czaja} are satisfied with 
$$Z=X_n, \quad Y=X_n,\quad M=PS(T),$$ 
which implies that the semigroup is quasi-stable with parameters $(\eta,\kappa\mu)$ with respect to the norm $\norm{\cdot}_{X_n}$ on $X_{n}$.

By Theorem \ref{thm:QUASI}, for any $\sigma\in(0,1-\eta)$, the covering condition \eqref{e:CONDBALLSSEMB2} holds with $q=\eta+\sigma$ and 
$$h=m_{\norm{\cdot}_{X_n}}\left(\overline{B}^{X_n}(0,1),\tfrac{\sigma}{2\kappa\mu}\right).$$
Let $\Lambda\colon X_n\to \ell^n_2$ be a linear isomorphism between $X_n$ and $\ell^n_2$.
Let $x_1,\ldots,x_{h}\in \overline{B}^{X_n}(0,1)$ be such that $\norm{x_j-x_l}_{X_n}\geq\frac{\sigma}{2\kappa\mu}$ for $j\neq l$. Considering $z_j=\Lambda x_j$, $j=1,\ldots,h$, we see that $|z_j|_2\leq\norm{\Lambda}_{\mathcal{L}(X_n,\ell^n_2)}$
and 
$$|z_j-z_l|_2\geq\frac{\sigma}{2\kappa\mu\|\Lambda^{-1}\|_{\mathcal{L}(\ell^n_2,X_n)}}\ \text{ for}\ j\neq l.$$ 
Therefore, we get 
\begin{equation*}
h \leq m_{|\cdot|_2}\Bigl(\overline{B}^{\R^{\mathbf{n}}}\Big(0,\|\Lambda\|_{\mathcal{L}(X_n,\ell^n_2)}\Big),\frac{\sigma}{2\kappa\mu\|\Lambda^{-1}\|_{\mathcal{L}(\ell^n_2,X_n)}}\Bigr),   
\end{equation*}
where $\Lambda\colon X_n\to \ell^{n}_2$ is any linear isomorphism between $X_n$ and $\ell^n_2$ and $\mathbf{n}$ is given in \eqref{e:ELL}.

To shorten the notation we introduce
$$m=m_{|\cdot|_2}\Bigl(\overline{B}^{\R^\mathbf{n}}(0,r),\eps\Bigr)$$
and estimate $m$ from above following \cite[Lemma 3.1.4]{chueshov}. There exist points $y_1,\ldots,y_m\in \overline{B}^{\R^\mathbf{n}}(0,r)$  such that $|y_j-y_l|_{2}\geq\eps$ for $j\neq l$. Hence
$$\bigcup_{j=1}^{m}B^{\R^\mathbf{n}}\left(y_j,\frac{\eps}{2}\right)\subseteq B^{\R^\mathbf{n}}\left(0,r+\frac{\eps}{2}\right)$$
and the comparison of volumes yields 
\begin{equation}\label{e:BOUNDOFM}
m\leq\left(1+\frac{2r}{\eps}\right)^\mathbf{n}.
\end{equation}
Taking $\eps=\frac{\sigma}{2\kappa\mu\|\Lambda^{-1}\|_{\mathcal{L}(\ell^n_2,X_n)}}$
and $r=\|\Lambda\|_{\mathcal{L}(X_n,\ell^n_2)}$, we obtain \eqref{e:SQUEEZINGWORSE}.
\end{proof}

We can improve the estimate  for $h$ in Proposition~\ref{prop:SQ2QUASIBANACH} by exploiting the squeezing property, the Bieberbach-Urysohn isodiametric inequality, and Lemma \ref{lem:FUNDAMENTAL} in order to improve the estimates for the fractal dimension of the exponential attractor. 

\begin{prop}\label{prop:QUASISQIMPROVEMENT}
Under the assumptions of Proposition~\ref{prop:SQ2QUASIBANACH}, for any $\sigma\in(0,1-\eta)$, the covering condition \eqref{e:CONDBALLSSEMB2} holds with $q=\eta+\sigma$ and
\begin{equation}\label{e:SQUEEZINGGOOD}
h\leq\left(1+\tfrac{2\kappa\mu d_{BM}(X_n,\ell^n_2)}{\sigma}\right)^{\mathbf{n}},
\end{equation}
where $\mathbf{n}$ is given in \eqref{e:ELL}. 
\end{prop}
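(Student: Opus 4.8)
The plan is to rerun the derivation of Proposition~\ref{prop:SQ2QUASIBANACH}, replacing the crude volume-packing bound \eqref{e:BOUNDOFM} by the sharper one supplied by the isodiametric inequality. By Proposition~\ref{prop:SQ2QUASIBANACH} the semigroup is quasi-stable on $B$ at time $T$ with parameters $(\eta,\kappa\mu)$ and the compact seminorm $\norm{\cdot}_{X_n}$ on $X_n$, the associated quasi-stability map being $K=\mu PS(T)$. Following the proof of Theorem~\ref{thm:QUASI}, where the relevant pseudometric on $B$ is $\rho(x,y)=\norm{Kx-Ky}_{X_n}$, it suffices to produce an $\eps$-independent upper bound for the quantity $\varsigma_{\rho}(B,\sigma)=\sup_{\eps>0}c_{\rho}(B,\eps,\sigma\eps)$: the inductive application of Lemma~\ref{lem:FUNDAMENTAL} in that proof then yields the covering condition \eqref{e:CONDBALLSSEMB2} with $q=\eta+\sigma$, $a=\tfrac{3}{2}\max\{\diam^{V}(B),1\}$, $b=1$ and $h=\varsigma_{\rho}(B,\sigma)$.

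To estimate $\varsigma_{\rho}(B,\sigma)$, I would fix $\eps>0$ and a nonempty $F\subseteq B$ with $\diam^{V}(F)\leq2\eps$, pick a maximal $\sigma\eps$-distinguishable set $\{y_1,\dots,y_m\}\subseteq F$ in $(B,\rho)$, and pass to the pairwise distinct points $z_j=Ky_j\in X_n$. By the Lipschitz bound \eqref{e:DIMSEM1} for $K$, the set $\{z_1,\dots,z_m\}$ has diameter at most $\kappa\mu\,\diam^{V}(F)\leq2\kappa\mu\eps$ in $X_n$, whereas $\norm{z_j-z_l}_{X_n}=\rho(y_j,y_l)\geq\sigma\eps$ for $j\neq l$. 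For an arbitrary linear isomorphism $\Lambda\colon X_n\to\ell^n_2$---regarded, in the complex case, as an $\R$-linear automorphism of $\R^{\mathbf{n}}$ with unchanged operator norms, $\mathbf{n}$ as in \eqref{e:ELL}---the points $w_j=\Lambda z_j\in\R^{\mathbf{n}}$ then have Euclidean diameter at most $2\kappa\mu\eps\,\norm{\Lambda}_{\mathcal{L}(X_n,\ell^n_2)}$ and satisfy $\abs{w_j-w_l}_2\geq\delta:=\tfrac{\sigma\eps}{\norm{\Lambda^{-1}}_{\mathcal{L}(\ell^n_2,X_n)}}$ for $j\neq l$.

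The decisive step is the packing estimate. The open Euclidean balls $B^{\R^{\mathbf{n}}}(w_j,\delta/2)$, $j=1,\dots,m$, are pairwise disjoint by the choice of $\delta$, and their union---a finite union of balls, hence Lebesgue measurable---equals the $\tfrac{\delta}{2}$-neighbourhood of $\{w_1,\dots,w_m\}$, so it has diameter at most $2\kappa\mu\eps\,\norm{\Lambda}_{\mathcal{L}(X_n,\ell^n_2)}+\delta$. By the Bieberbach-Urysohn isodiametric inequality, its volume is at most $\omega_{\mathbf{n}}\bigl(\tfrac12\bigl(2\kappa\mu\eps\,\norm{\Lambda}_{\mathcal{L}(X_n,\ell^n_2)}+\delta\bigr)\bigr)^{\mathbf{n}}$, where $\omega_{\mathbf{n}}$ is the volume of the Euclidean unit ball in $\R^{\mathbf{n}}$; comparing with the total volume $m\,\omega_{\mathbf{n}}(\delta/2)^{\mathbf{n}}$ of the disjoint balls and cancelling $\omega_{\mathbf{n}}(\delta/2)^{\mathbf{n}}$ yields
\[
m\leq\Bigl(1+\tfrac{2\kappa\mu\eps\,\norm{\Lambda}_{\mathcal{L}(X_n,\ell^n_2)}}{\delta}\Bigr)^{\mathbf{n}}
=\Bigl(1+\tfrac{2\kappa\mu\,\norm{\Lambda}_{\mathcal{L}(X_n,\ell^n_2)}\,\norm{\Lambda^{-1}}_{\mathcal{L}(\ell^n_2,X_n)}}{\sigma}\Bigr)^{\mathbf{n}}.
\]
Since the right-hand side is independent of $\eps$ and of $F$, it bounds $\varsigma_{\rho}(B,\sigma)$; taking the infimum over all linear isomorphisms $\Lambda\colon X_n\to\ell^n_2$ and using the definition of $d_{BM}(X_n,\ell^n_2)$ gives \eqref{e:SQUEEZINGGOOD}, which together with the first paragraph completes the proof.

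The gain of the constant $2$ in \eqref{e:SQUEEZINGGOOD} over the constant $4$ in \eqref{e:SQUEEZINGWORSE} comes from retaining, and feeding into the isodiametric inequality, the bound $2\kappa\mu\eps$ on the \emph{diameter} of $\{z_1,\dots,z_m\}$ in $X_n$: this information is present in the proof of Theorem~\ref{thm:QUASI} but is discarded there when the points are rescaled into the unit ball $\overline{B}^{X_n}(0,1)$, whose $\Lambda$-image is subsequently charged for its radius $\norm{\Lambda}_{\mathcal{L}(X_n,\ell^n_2)}$ (equivalently, for a diameter $2\norm{\Lambda}_{\mathcal{L}(X_n,\ell^n_2)}$), while the rescaled set itself has diameter only $\norm{\Lambda}_{\mathcal{L}(X_n,\ell^n_2)}$ after applying $\Lambda$---exactly the factor-$2$ discrepancy. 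No serious obstacle is anticipated; the only delicate points are the real-linear reinterpretation of $\Lambda$ in the complex case, ensuring that the volume comparison is carried out in $\R^{\mathbf{n}}$ with $\mathbf{n}$ as in \eqref{e:ELL}, and the bookkeeping that makes the bound on $m$ independent of $\eps$.
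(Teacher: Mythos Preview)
Your proof is correct and follows essentially the same route as the paper: both rerun the proof of Theorem~\ref{thm:QUASI} with the pseudometric $\rho(x,y)=\mu\norm{PS(T)x-PS(T)y}_{X_n}$, map the $\sigma\eps$-distinguishable set into $\R^{\mathbf{n}}$ via an isomorphism $\Lambda\colon X_n\to\ell^n_2$, and replace the crude packing bound by the isodiametric inequality applied to the $\delta/2$-neighbourhood of the image, then optimise over $\Lambda$. Your version is in fact slightly more direct: the paper first passes to a maximal $\delta$-distinguishable subset of the full image $R(\Lambda(P(S(T)F)))$ before applying the isodiametric inequality, whereas you apply it immediately to the finite set $\{w_1,\dots,w_m\}$, which suffices since these points already satisfy the required separation and diameter bounds.
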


\begin{proof}
By Proposition~\ref{prop:SQ2QUASIBANACH} the semigroup is quasi-stable on $B$ at time $T$ with parameters $(\eta,\kappa\mu)$
with respect to $K=\mu PS(T)$ and the compact norm $\norm{\cdot}_{X_n}$ on $X_n$.
The proof follows the lines of the proof of Theorem \ref{thm:QUASI} with the precompact pseudometric space $(B,\rho)$ where
$$\rho(x,y)=\norm{Kx-Ky}_{X_n}=\mu\norm{PS(T)x-PS(t)y}_{X_{n}},\ x,y\in B.$$
In order to apply Lemma~\ref{lem:FUNDAMENTAL}, for $\sigma>0$ we improve the estimate for 
$$\varsigma_{\rho}(B,\sigma)=\sup_{\eps>0}c_{\rho}(B,\eps,\sigma\eps),$$
where $c_{\rho}(\cdot,\cdot,\cdot)$ is defined in \eqref{e:DEFCRHO}.

Let $\eps>0$, $\emptyset\neq F\subseteq B$ with $\diam^{X}(F)\leq 2\eps$, and $\{y_1,\ldots,y_{m_{F}}\}\subseteq F$ be a  maximal $\sigma\eps$-distinguishable subset of $F$ in $(B,\rho)$, where  $m_{F}=m_{\rho}(F,\sigma\eps)$. Setting $z_j=S(T)y_j\in B$, $j=1,\ldots,m_{F}$, we see that
$$\norm{Pz_j-Pz_l}_{X_n}\geq\tfrac{\sigma\eps}{\mu},\ j\neq l.$$
Fix an arbitrary linear isomorphism $\Lambda\colon X_{n}\to\ell^{n}_2$ and let $R\colon\ell^n_2\to\R^{\mathbf{n}}$ denote the (real) isometry. Let 
$\{x_1,\ldots,x_{m}\}$ be a maximal $\frac{\sigma\eps}{\mu\norm{\Lambda^{-1}}_{\mathcal{L}(\ell^n_2,X_n)}}$-distinguishable subset of $R(\Lambda(P(S(T)F)))$, where
$$m=m_{|\cdot|_{2}}(R(\Lambda(P(S(T)F))),\tfrac{\sigma\eps}{\mu\norm{\Lambda^{-1}}_{\mathcal{L}(\ell^n_2,X_n)}}).$$ 
Note that $m_{F}\leq m$,
since the points
$$R(\Lambda(Pz_j))\in R(\Lambda(P(S(T)F))),\ j=1,\ldots,m_{F},$$ 
form a $\frac{\sigma\eps}{\mu\norm{\Lambda^{-1}}_{\mathcal{L}(\ell^n_2,X_n)}}$-distinguishable set.
Observe that the balls $B^{\R^{\mathbf{n}}}(x_j,\frac{\sigma\eps}{2\mu\norm{\Lambda^{-1}}_{\mathcal{L}(\ell^n_2,X_n)}})$, $j=1,\ldots,m$, are disjoint in $\R^\mathbf{n}$. Moreover, we have
\begin{equation}\label{e:FORVOLUMES}
\bigcup_{j=1}^{m}B^{\R^\mathbf{n}}\Big(x_j,\tfrac{\sigma\eps}{2\mu\norm{\Lambda^{-1}}_{\mathcal{L}(\ell^n_2,X_n)}}\Big)\subseteq B^{\R^\mathbf{n}}\Big(R(\Lambda(P(S(T)F))),\tfrac{\sigma\eps}{2\mu\norm{\Lambda^{-1}}_{\mathcal{L}(\ell^n_2,X_n)}}\Big),
\end{equation}
where the latter set means $\{x\in\R^\mathbf{n}\colon \dist^{\R^\textbf{n}}(x,R(\Lambda(P(S(T)F))))<\frac{\sigma\eps}{2\mu\norm{\Lambda^{-1}}_{\mathcal{L}(\ell^n_2,X_n)}}\}$.
Note that by (b) we have
$$\diam^{\R^{\mathbf{n}}}\Big(B^{\R^\mathbf{n}}\Big(R(\Lambda(P(S(T)F))),\tfrac{\sigma\eps}{2\mu\norm{\Lambda^{-1}}_{\mathcal{L}(\ell^n_2,X_n)}}\Big)\Big)\leq 2\eps\kappa\norm{\Lambda}_{\mathcal{L}(X_n,\ell^n_2)}+\tfrac{\sigma\eps}{\mu\norm{\Lambda^{-1}}_{\mathcal{L}(\ell^n_2,X_n)}}.$$
Indeed, for $v_i\in B^{\R^\mathbf{n}}(R(\Lambda(P(S(T)F))),\frac{\sigma\eps}{2\mu\norm{\Lambda^{-1}}_{\mathcal{L}(\ell^n_2,X_n)}}))$, $i=1,2$, there exist $w_i\in F\subseteq B$ such that
$$\abs{v_i-R(\Lambda(P(S(T)w_i)))}_{2}<\tfrac{\sigma\eps}{2\mu\norm{\Lambda^{-1}}_{\mathcal{L}(\ell^n_2,X_n)}},\ i=1,2.$$
By isometry of $R$ we get
\begin{equation*}
\begin{split}
\abs{v_1-v_2}_{2}&\leq \tfrac{\sigma\eps}{\mu\norm{\Lambda^{-1}}_{\mathcal{L}(\ell^n_2,X_n)}}+\abs{\Lambda(PS(T)w_1-PS(T)w_2)}_{2}\leq\tfrac{\sigma\eps}{\mu\norm{\Lambda^{-1}}_{\mathcal{L}(\ell^n_2,X_n)}}+2\eps\kappa\norm{\Lambda}_{\mathcal{L}(X_n,\ell^n_2)}.
\end{split}
\end{equation*}
We compare the volumes of sets in \eqref{e:FORVOLUMES} using the isodiametric inequality (see e.g. \cite[Theorem 2.4]{EG15}) and obtain
$$m\omega_{\mathbf{n}} \left(\tfrac{\sigma\eps}{2\mu\norm{\Lambda^{-1}}_{\mathcal{L}(\ell^n_2,X_n)}}\right)^{\mathbf{n}}\leq\omega_{\mathbf{n}}\left(\eps\kappa\norm{\Lambda}_{\mathcal{L}(X_n,\ell^n_2)}+\tfrac{\sigma\eps}{2\mu\norm{\Lambda^{-1}}_{\mathcal{L}(\ell^n_2,X_n)}}\right)^{\mathbf{n}},$$
where $\omega_{\mathbf{n}}=\frac{\pi^{\frac{\mathbf{n}}{2}}}{\Gamma(\frac{\mathbf{n}}{2}+1)}$ denotes the volume of the unit ball in $\R^{\mathbf{n}}$.
Consequently, we get
\begin{equation*}
m_{F}\leq m\leq \left(1+\tfrac{2\kappa\mu d_{BM}(X_n,\ell^n_2)}{\sigma}\right)^{\mathbf{n}},
\end{equation*}
and finally, we apply Lemma~\ref{lem:FUNDAMENTAL} as in the proof of Theorem~\ref{thm:QUASI}.
\end{proof}

Combining Proposition \ref{prop:QUASISQIMPROVEMENT} with Theorem \ref{thm:EXDEXPAT} and Remark \ref{rem:closed} we obtain the following existence theorem for $T$-discrete exponential attractors with an improved bound for the fractal dimension.

\begin{thm}\label{thm:SQEXPIMPROVED}
Let $\{S(t)\colon t\geq 0\}$ be a semigroup on a nonempty closed subset $V$ of a Banach space $(X,\norm{\cdot}_{X})$ over $\mathbb{K}\in\{\R,\C\}$ and $\mathbf{B}\subseteq V$ be a bounded absorbing set for the semigroup. Moreover, let $\{S(t)\colon t\geq 0\}$ be asymptotically closed or $\mathbf{B}$ be closed. 

If the semigroup satisfies the generalized squeezing property on $\mathbf{B}$ at time $T$ with parameters $(n,\eta,\mu,\kappa)$,
then for any $\sigma\in (0,1-\eta)$ there exists a~$T$-discrete exponential attractor $\mathbf{M_0}\subseteq \mathbf{B}$ for the semigroup and 
\begin{equation}\label{e:SQUEEZING}
\dimf^{V}(\mathbf{M_0})\leq \mathbf{n}\log_{\frac{1}{\eta+\sigma}}\left(1+\tfrac{2\kappa\mu d_{BM}(X_n,\ell^n_2)}{\sigma}\right),
\end{equation}
with $\mathbf{n}$ given in \eqref{e:ELL}. If the semigroup is asymptotically closed, then it has a global attractor $\mathbf{A}$  contained in $\mathbf{M_0}$.
\end{thm}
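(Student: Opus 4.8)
The plan is to assemble Theorem~\ref{thm:SQEXPIMPROVED} directly from the three ingredients already available: Proposition~\ref{prop:QUASISQIMPROVEMENT}, which extracts the covering condition from the generalized squeezing property with the improved constant $h$; Theorem~\ref{thm:EXDEXPAT}, which turns the covering condition into a $T$-discrete exponential attractor together with the dimension bound~\eqref{e:ESTDIMM3} and the inclusion of the global attractor; and Remark~\ref{rem:closed}, which lets us trade the asymptotic closedness hypothesis for closedness of the absorbing set when $S(T)$ is a closed map on $\cl_V\mathbf{B}$.

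First I would invoke Remark~\ref{rem:POSINV} to replace $\mathbf{B}$ by a positively invariant bounded absorbing set, which we may call $\mathbf{B}$ again; the generalized squeezing property on the original $\mathbf{B}$ passes to the smaller set, and $S(T)\mathbf{B}\subseteq\mathbf{B}$ now holds, so the hypothesis $S(T)B\subseteq B$ of Proposition~\ref{prop:QUASISQIMPROVEMENT} is met. Applying that proposition with the given parameters $(n,\eta,\mu,\kappa)$, for any fixed $\sigma\in(0,1-\eta)$ we obtain the covering condition~\eqref{e:CONDBALLSSEMB2}, hence~\eqref{e:CONDBALLSSEMB}, for $S(kT)\mathbf{B}$ with $q=\eta+\sigma\in(0,1)$, some $a,b>0$, $k_0\in\N$, and $h\leq\bigl(1+\tfrac{2\kappa\mu d_{BM}(X_n,\ell^n_2)}{\sigma}\bigr)^{\mathbf{n}}$.

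Next I would feed this covering condition into Theorem~\ref{thm:EXDEXPAT} (or, in the case where $\mathbf{B}$ is merely closed rather than the semigroup asymptotically closed, into Remark~\ref{rem:closed}, noting that $S(T)$ is Lipschitz—hence continuous, hence closed—on $\cl_V\mathbf{B}$ by property~(b) of the generalized squeezing property combined with~\eqref{e:BANACHSQU2}). This yields a $T$-discrete exponential attractor $\mathbf{M_0}=\mathbf{A}\cup\mathbf{E_0}=\cl_V\mathbf{E_0}\subseteq\mathbf{B}$ with some attraction rate $\xi\in(0,\tfrac1T\ln\tfrac1{\eta+\sigma})$ and $\dimf^{V}(\mathbf{M_0})\leq\log_{\frac1q}h$. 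Substituting $q=\eta+\sigma$ and the bound on $h$, and using monotonicity of $\log_{\frac{1}{\eta+\sigma}}$ together with $\log_{\frac{1}{\eta+\sigma}}(t^{\mathbf{n}})=\mathbf{n}\log_{\frac{1}{\eta+\sigma}}t$, gives exactly~\eqref{e:SQUEEZING}. When the semigroup is asymptotically closed, Theorem~\ref{thm:EXDEXPAT} also furnishes the global attractor $\mathbf{A}=\Lambda^V(\cl_V\mathbf{E_0})\subseteq\mathbf{M_0}$, completing the statement.

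There is essentially no obstacle here: the theorem is a packaging of results already proved, and the only point requiring a moment's care is the bookkeeping that lets us dispense with asymptotic closedness under the alternative hypothesis that $\mathbf{B}$ is closed—namely checking that $S(T)$ restricted to $\cl_V\mathbf{B}$ is a closed map. This follows immediately from the Lipschitz estimate on $S(T)$ over $\mathbf{B}$ implicit in~\eqref{e:BANACHSQU2} and~(b) (which gives $\|S(T)x-S(T)y\|_X\leq(\eta+\mu\kappa)\|x-y\|_X$), extended to the closure by continuity, so that the hypothesis of Remark~\ref{rem:closed} is verified. No new estimates are needed.
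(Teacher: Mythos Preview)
Your proposal is correct and matches the paper's own argument: the theorem is obtained precisely by combining Proposition~\ref{prop:QUASISQIMPROVEMENT} with Theorem~\ref{thm:EXDEXPAT} and Remark~\ref{rem:closed}, and your handling of the positive invariance (via Remark~\ref{rem:POSINV}) and of the closedness of $S(T)$ on $\cl_V\mathbf{B}$ via the Lipschitz bound from~\eqref{e:BANACHSQU2} and~(b) is exactly the bookkeeping required.
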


In the classical setting for squeezing semigroups in \emph{Hilbert spaces} $X$, the finite-dimensional space $X_n$ is an $n$-dimensional subspace of $X$ and $P$ is the orthogonal projection onto $X_n$. Thus $d_{BM}(X_n,\ell^n_2)=1$ and $\norm{P}_{\mathcal{L}(X,X_n)}=1$ and the estimates in  
\eqref{e:SQUEEZINGWORSE}, \eqref{e:SQUEEZINGGOOD}, and \eqref{e:SQUEEZING} simplify accordingly.
We obtain the following corollary. 

\begin{cor}\label{cor:6.6}
Let the assumptions of Theorem~\ref{thm:SQEXPIMPROVED} hold for a squeezing semigroup on a~nonempty closed subset $V$ of a~Hilbert space $(X,\norm{\cdot}_{X})$ and let  $X_n$ be an $n$-dimensional subspace of $X$ and $P$ be the orthogonal projection of $X$ onto $X_n$. Then, for any $\sigma\in (0,1-\eta)$ there exists a~$T$-discrete exponential attractor $\mathbf{M_0}\subseteq \mathbf{B}$ for the semigroup and 
\begin{equation*}
\dimf^{V}(\mathbf{M_0})\leq \mathbf{n}\log_{\frac{1}{\eta+\sigma}}\left(1+\tfrac{2\kappa\mu}{\sigma}\right),
\end{equation*}
with $\mathbf{n}$ given in \eqref{e:ELL}. If the semigroup is asymptotically closed, then it has a global attractor $\mathbf{A}$  contained in $\mathbf{M_0}$.
\end{cor}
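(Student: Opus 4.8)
The plan is to obtain Corollary~\ref{cor:6.6} as a direct specialization of Theorem~\ref{thm:SQEXPIMPROVED} to the Hilbert-space situation, so essentially all the work has already been done and what remains is to substitute the values of the geometric constants. First I would recall that, by hypothesis, $X$ is a Hilbert space, $X_n$ is an $n$-dimensional subspace of $X$, and $P$ is the orthogonal projection of $X$ onto $X_n$. As observed in the discussion following \eqref{e:JOHN}, an $n$-dimensional subspace of a Hilbert space is isometric to $\ell^n_2$, so $d_{BM}(X_n,\ell^n_2)=1$; moreover the orthogonal projection satisfies $\norm{P}_{\mathcal{L}(X,X_n)}=1$. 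In particular the squeezing semigroup here is precisely a semigroup satisfying the generalized squeezing property on $\mathbf{B}$ at time $T$ with parameters $(n,\eta,\mu,\kappa)$ in the sense of Definition~\ref{defn:BANACHSQU}, since the squeezing property always implies the generalized squeezing property.

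Next I would simply invoke Theorem~\ref{thm:SQEXPIMPROVED}: under the stated assumptions (the semigroup is asymptotically closed or $\mathbf{B}$ is closed, $V$ is a closed subset of the Banach space $X$, $\mathbf{B}$ is a bounded absorbing set), for any $\sigma\in(0,1-\eta)$ there exists a $T$-discrete exponential attractor $\mathbf{M_0}\subseteq\mathbf{B}$ for the semigroup, and it has a global attractor $\mathbf{A}\subseteq\mathbf{M_0}$ when the semigroup is asymptotically closed. The only remaining point is the dimension bound: plugging $d_{BM}(X_n,\ell^n_2)=1$ into the estimate \eqref{e:SQUEEZING} turns
\[
\dimf^{V}(\mathbf{M_0})\leq \mathbf{n}\log_{\frac{1}{\eta+\sigma}}\left(1+\tfrac{2\kappa\mu d_{BM}(X_n,\ell^n_2)}{\sigma}\right)
\]
into exactly
\[
\dimf^{V}(\mathbf{M_0})\leq \mathbf{n}\log_{\frac{1}{\eta+\sigma}}\left(1+\tfrac{2\kappa\mu}{\sigma}\right),
\]
with $\mathbf{n}$ given by \eqref{e:ELL}, which is the claimed estimate.

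There is essentially no obstacle here: the corollary is a bookkeeping specialization, and the only thing worth double-checking is that the Hilbert-space hypotheses are genuinely a special case of the Banach-space framework of Theorem~\ref{thm:SQEXPIMPROVED} (they are, since a Hilbert space is a Banach space and the orthogonal projection is an admissible choice of the possibly nonlinear map $P\colon V\to X_n$ in Definition~\ref{defn:BANACHSQU}) and that the Banach–Mazur distance of an $n$-dimensional inner-product space to $\ell^n_2$ really is $1$ and not merely $\leq\sqrt n$ from John's theorem. If one wanted to be completely self-contained one could alternatively re-run the volume-comparison argument from the proof of Proposition~\ref{prop:QUASISQIMPROVEMENT} directly with $\Lambda$ an isometry, so that $\norm{\Lambda}_{\mathcal{L}(X_n,\ell^n_2)}=\norm{\Lambda^{-1}}_{\mathcal{L}(\ell^n_2,X_n)}=1$ from the outset, but this merely reproduces \eqref{e:SQUEEZINGGOOD} with the constant $1$ in place of $d_{BM}(X_n,\ell^n_2)$ and adds nothing. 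Hence the proof is: observe the Hilbert structure gives $d_{BM}(X_n,\ell^n_2)=1$, apply Theorem~\ref{thm:SQEXPIMPROVED}, and simplify.
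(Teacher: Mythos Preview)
Your proposal is correct and matches the paper's approach exactly: the paper does not even write out a separate proof for this corollary, but simply observes in the paragraph preceding it that in the Hilbert setting $d_{BM}(X_n,\ell^n_2)=1$ and $\norm{P}_{\mathcal{L}(X,X_n)}=1$, so the estimates \eqref{e:SQUEEZINGWORSE}, \eqref{e:SQUEEZINGGOOD}, and \eqref{e:SQUEEZING} simplify accordingly. Your write-up is a faithful expansion of precisely this specialization.
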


\begin{rem}
Remaining in the \emph{Hilbert} setting of Corollary~\ref{cor:6.6}, note that if $\mu\in(0,1)$ in (a) of Definition~\ref{defn:BANACHSQU} then 
for any $x,y\in B$ such that $S(T)x-S(T)y\neq 0$ we have 
$$\norm{P(S(T)x-S(T)y)}_{X}\leq\norm{S(T)x-S(T)y}_{X}<\tfrac{1}{\mu}\norm{S(T)x-S(T)y}_{X}.$$
It follows that 
$$\norm{S(T)x-S(T)y}_{X}\leq\eta\norm{x-y}_{X},\ x,y\in B,$$
that is, $S(T)$ is a contraction on $B$.
Therefore, the only interesting case is when $\mu\geq 1$ in (a).  In fact, the statement (a) was originally written with $\mu=\sqrt{1+\alpha^2}$ for some $\alpha>0$. Then,  the implication can also be  expressed as follows:
there exist $\alpha>0$ and $\eta\in[0,1)$ such that for any $x,y\in B$ 
\begin{equation*}
\alpha\norm{P(S(T)x-S(T)y)}_{X}<\norm{(I-P)(S(T)x-S(T)y)}_{X},
\end{equation*}
implies that 
$$\norm{S(T)x-S(T)y}_{X}\leq\eta\norm{x-y}_{X}.$$
Equivalently, for any $x,y\in B$
either
\begin{equation*}
\norm{(I-P)(S(T)x-S(T)y)}_{X}\leq\alpha\norm{P(S(T)x-S(T)y)}_{X}
\end{equation*}
holds or
\begin{equation*}
\norm{S(T)x-S(T)y}_{X}\leq\eta\norm{x-y}_{X}.
\end{equation*}
\end{rem}

\section{Construction for semigroups of Ladyzhenskaya type}\label{sec:Lad}

A special type of squeezing semigroups was used by O. Ladyzhenskaya \cite{Lad82b}
in 1982 to estimate the fractal dimension of the global attractor for the 2D Navier-Stokes equation. This notion was later further investigated and the estimates for the fractal dimension were improved, e.g. in \cite[Theorems 3 and 4]{BV99}. Here, we introduce the notion of Ladyzhenskaya type semigroups in normed spaces, although the classical setting is in a Hilbert space with an orthogonal projection $P$ onto a finite-dimensional subspace. We show that these semigroups form a subclass of quasi-stable semigroups by comparing them with squeezing semigroups and smoothing semigroups. By exploiting the Hilbert space structure of the phase space we also improve the bounds for the fractal dimension of $T$-discrete exponential attractors obtained for squeezing semigroups in the previous section.

\begin{defn}\label{defn:LADY}
We say that a semigroup $\{S(t)\colon t\geq0\}$ on a nonempty subset $V$ of a normed space $(X,\norm{\cdot}_{X})$ over $\K\in\{\R,\C\}$
is of \emph{Ladyzhenskaya type} on a subset $B$ of $V$ at time $T>0$ with parameters $(n,\eta,\kappa)$ if
\begin{itemize}
\item[(a)] there exists a subspace $X_n$ of $X$ of dimension $n\in\N$, a map $P\colon V\to X_n$ and
a constant $\eta\in[0,1)$ such that
\begin{equation}\label{e:LADY}
\norm{(I-P)S(T)x-(I-P)S(T)y}_{X}\leq\eta\norm{x-y}_{X},\ x,y\in B,
\end{equation} 
\item[(b)] $PS(T)$ is Lipschitz continuous on $B$ with Lipschitz constant $\kappa>0$, i.e., 
\begin{equation*}
\norm{PS(T)x-PS(T)y}_{X}\leq \kappa\norm{x-y}_{X},\ x,y\in B.
\end{equation*}
\end{itemize}
\end{defn}

\begin{rem}
Comparing the notion of Ladyzhenskaya type semigroups with the squeezing property we observe the following. 
\begin{itemize}
\item[(i)] 
The condition (a) in Definition~\ref{defn:LADY} implies property (a) in Definition~\ref{defn:BANACHSQU} with $\mu=1+\alpha$ for $\alpha>0$ so large that $(1+\frac{1}{\alpha})\eta<1$. Thus, semigroups of Ladyzhenskaya type are squeezing semigroups with parameters $\Big(n, (1+\frac{1}{\alpha})\eta, 1+\alpha, \kappa\Big)$
and hence, quasi-stable with parameters $((1+\frac{1}{\alpha})\eta,\kappa(1+\alpha))$ by Proposition~\ref{prop:SQ2QUASIBANACH}. Indeed, let $\alpha>0$ be so large that $(1+\frac{1}{\alpha})\eta<1$ and suppose $x,y\in B$ are such that
$$\norm{S(T)x-S(T)y}_{X}>(1+\alpha)\norm{PS(T)x-PS(T)y}_{X}.$$
Then we get by (a)
\begin{equation}\label{e:INSTEADOFPYTHAGORAS}
\begin{split}
\norm{S(T)x-S(T)y}_{X}&\leq\norm{PS(T)x-PS(T)y}_{X}+\norm{(I-P)S(T)x-(I-P)S(T)y}_{X}\\
&<\tfrac{1}{1+\alpha}\norm{S(T)-S(T)y}_{X}+\eta\norm{x-y}_{X}
\end{split}
\end{equation}
and consequently,
$$\norm{S(T)x-S(T)y}_{X}<\left(1+\tfrac{1}{\alpha}\right)\eta\norm{x-y}_{X}.$$
Hence, if $B$ is a nonempty bounded subset of $V$ such that $S(T)B\subseteq B$, then Proposition~\ref{prop:QUASISQIMPROVEMENT} implies that, for any $\sigma\in(0,1-(1+\frac{1}{\alpha})\eta)$, the covering condition \eqref{e:CONDBALLSSEMB2} holds  with $q=(1+\frac{1}{\alpha})\eta+\sigma$ and 
$$h\leq\left(1+\tfrac{2\kappa(1+\alpha)d_{BM}(X_n,\ell^n_2)}{\sigma}\right)^{\mathbf{n}}$$
with $\mathbf{n}$ given in \eqref{e:ELL}. 

\item[(ii)] In the classical setting for Ladyzhenskaya type semigroups in \emph{Hilbert spaces}, $X_n$ is an $n$-dimensional subspace of a Hilbert space $X$ and $P$ is an \emph{orthogonal projection} of $X$ onto $X_n$. Thus instead of the triangle inequality in \eqref{e:INSTEADOFPYTHAGORAS} we can use the Pythagorean Theorem and conclude that Ladyzhenskaya type semigroups are squeezing semigroups with parameters $\Big(n, \sqrt{1+\frac{1}{\alpha^2}}\eta, \sqrt{1+\alpha^2}, \kappa\Big)$ provided that $\sqrt{1+\frac{1}{\alpha^2}}\eta<1$.
In this case, if $B$ is a~nonempty bounded subset of $V$ such that $S(T)B\subseteq B$, for any $\sigma\in\Big(0,1-\sqrt{1+\frac{1}{\alpha^2}}\eta\Big)$ the covering condition \eqref{e:CONDBALLSSEMB2} holds  with $q=\sqrt{1+\frac{1}{\alpha^2}}\eta+\sigma$ and 
$$h\leq\left(1+\tfrac{2\kappa\sqrt{1+\alpha^2}}{\sigma}\right)^{\mathbf{n}}.$$

\item[(iii)] Remaining in the Hilbert setting of (ii), we observe that condition (a) with $\mu=1$ in the definition of the squeezing property (Definition~\ref{defn:BANACHSQU}) implies property (a) in the definition of Ladyzhenskaya type semigroups (Definition~\ref{defn:LADY}).

Indeed, (a) in Definition~\ref{defn:BANACHSQU}  with $\mu=1$ is equivalent to the statement that for $x,y\in B$ either
\begin{equation}\label{e:SQ1zeta1}
\norm{S(T)x-S(T)y}_{X}\leq\norm{P(S(T)x-S(T)y)}_{X}
\end{equation}
holds or
\begin{equation}\label{e:SQ2zeta1}
\norm{S(T)x-S(T)y}_{X}\leq\eta\norm{x-y}_{X}.
\end{equation}
If \eqref{e:SQ1zeta1} holds, then $(I-P)(S(T)x-S(T)y)=0$ which implies \eqref{e:LADY}. On the other hand, if \eqref{e:SQ2zeta1} is satisfied then \eqref{e:LADY} certainly also holds, since $\norm{I-P}_{\mathcal{L}(X,X)}\leq 1$.
\end{itemize}
\end{rem}

We now show that semigroups of Ladyzhenskaya type satisfy both, the smoothing property and the generalized squeezing property with $\mu=1$ and hence, they are also quasi-stable from this point of view.

\begin{prop}\label{pro:LAD_SQ_SM}
Let $\{S(t)\colon t\geq0\}$ be a semigroup on a nonempty subset $V$ of a normed space $(X,\norm{\cdot}_{X})$ over $\K\in\{\R,\C\}$. If the semigroup is of Ladyzhenskaya type on a subset $B$ of $V$ at time $T>0$ with parameters $(n,\eta,\kappa)$ then it satisfies 
\begin{itemize}
\item[$(i)$] the smoothing property in Definition~\ref{defn:smooth} with \eqref{eq:smoothingRadek}, parameters $(\eta,\kappa)$ and
$$C(T)= (I-P)S(T),\quad M(T)=PS(T),\quad Z=X_n,$$ 
\item[$(ii)$] the generalized squeezing property in Definition~\ref{defn:BANACHSQU} with parameters $(n,\eta,1,\kappa)$. 
\end{itemize}
Moreover, if $B$ is a nonempty bounded subset of $V$ such that $S(T)B\subseteq B$ and $\sigma\in(0,1-\eta)$, the covering condition \eqref{e:CONDBALLSSEMB2} holds with  $q=\eta+\sigma$ and 
$$h\leq\left(1+\tfrac{2\kappa d_{BM}(X_n,\ell^n_2)}{\sigma}\right)^{\mathbf{n}},$$
where $\mathbf{n}$ is given in \eqref{e:ELL}. 

Consequently, if $\{S(t)\colon t\geq 0\}$ is a semigroup on a nonempty closed subset $V$ of a Banach space $X$ with a bounded absorbing set $\mathbf{B}\subseteq V$ that is  asymptotically closed or if $\mathbf{B}$ is closed, then for any $\sigma\in (0,1-\eta)$ there exists a $T$-discrete exponential attractor $\mathbf{M_0}$ for the semigroup and 
\begin{equation*}
\dimf^{V}(\mathbf{M_0})\leq \mathbf{n}\log_{\frac{1}{\eta+\sigma}}\left(1+\tfrac{2\kappa d_{BM}(X_n,\ell^n_2)}{\sigma}\right).
\end{equation*}
\end{prop}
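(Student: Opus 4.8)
The plan is to read off parts (i) and (ii) directly from the two defining inequalities \eqref{e:LADY} and (b) of a Ladyzhenskaya type semigroup and then feed the resulting generalized squeezing property into the volume-comparison estimate of Section~\ref{sec:SQ}. Throughout I equip the finite-dimensional subspace $X_n$ with the norm inherited from $X$; then $X_n$ is a normed space of dimension $n$ and the embedding $X_n\hookrightarrow X$ is compact, since bounded subsets of a finite-dimensional space are precompact.

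For (i), since $X_n$ is a linear subspace of $X$ we have, for every $x\in V$, $(I-P)S(T)x=S(T)x-PS(T)x\in X$, so that $S(T)=C(T)+M(T)$ with $C(T)=(I-P)S(T)$ and $M(T)=PS(T)$ (no linearity of $P$ is needed). Inequality \eqref{eq:contraction} for $C(T)$ is exactly \eqref{e:LADY}, and \eqref{eq:smoothingRadek} for $M(T)$ into $Z=X_n$ is exactly (b) of Definition~\ref{defn:LADY} read with the $X$-norm on $X_n$; hence the smoothing property holds with parameters $(\eta,\kappa)$ and, by Proposition~\ref{prop:smoothqs}, the semigroup is quasi-stable. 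For (ii), writing $S(T)x-S(T)y=(PS(T)x-PS(T)y)+((I-P)S(T)x-(I-P)S(T)y)$ and applying the triangle inequality together with \eqref{e:LADY} gives
\[
\norm{S(T)x-S(T)y}_X\leq\norm{PS(T)x-PS(T)y}_{X_n}+\eta\norm{x-y}_X,\qquad x,y\in B,
\]
which is \eqref{e:BANACHSQU2} with $\mu=1$, while condition (b) of Definition~\ref{defn:BANACHSQU} is condition (b) of Definition~\ref{defn:LADY}. Thus the generalized squeezing property holds with parameters $(n,\eta,1,\kappa)$.

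For the covering condition I would apply Proposition~\ref{prop:QUASISQIMPROVEMENT} to the generalized squeezing property just established with $\mu=1$: if $B$ is bounded with $S(T)B\subseteq B$ and $\sigma\in(0,1-\eta)$, then \eqref{e:CONDBALLSSEMB2} holds with $q=\eta+\sigma$ and $h\leq\bigl(1+\tfrac{2\kappa d_{BM}(X_n,\ell^n_2)}{\sigma}\bigr)^{\mathbf{n}}$. For the last assertion I would first replace $\mathbf{B}$ by a positively invariant bounded absorbing set via Remark~\ref{rem:POSINV}, note that $S(T)=PS(T)+(I-P)S(T)$ is Lipschitz on $\mathbf{B}$ as a sum of a $\kappa$-Lipschitz map and an $\eta$-contraction, and then invoke Theorem~\ref{thm:EXDEXPAT} together with Remark~\ref{rem:closed} (using asymptotic closedness of the semigroup or closedness of $\mathbf{B}$) to obtain a $T$-discrete exponential attractor $\mathbf{M_0}$; the dimension bound follows from \eqref{e:ESTDIMM3} and the monotonicity of $s\mapsto\log_{1/q}s$, since $\dimf^{V}(\mathbf{M_0})\leq\log_{\frac{1}{\eta+\sigma}}h\leq\mathbf{n}\log_{\frac{1}{\eta+\sigma}}\bigl(1+\tfrac{2\kappa d_{BM}(X_n,\ell^n_2)}{\sigma}\bigr)$.

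The conceptual content is light — everything is a direct unwinding of definitions and an appeal to results already proved — so I expect no essential obstacle. The step to be careful about is the bookkeeping in the last paragraph: verifying that the hypotheses of Proposition~\ref{prop:QUASISQIMPROVEMENT} ($B$ bounded with $S(T)B\subseteq B$) and of Remark~\ref{rem:closed} (closedness of $S(T)$ on $\cl_{V}\mathbf{B}$, which here is automatic from the Lipschitz decomposition of $S(T)$) are genuinely met, so that the passage from the covering condition to the existence and finite dimensionality of $\mathbf{M_0}$ is legitimate without any Hölder-in-time assumption on the semigroup.
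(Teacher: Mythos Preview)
Your proposal is correct and follows essentially the same route as the paper: verify (i) and (ii) directly from the Ladyzhenskaya inequalities, then plug $\mu=1$ into Proposition~\ref{prop:QUASISQIMPROVEMENT} for the covering estimate. The only cosmetic difference is that for the final existence claim the paper simply cites Theorem~\ref{thm:SQEXPIMPROVED}, whereas you unpack that reference into Remark~\ref{rem:POSINV}, Theorem~\ref{thm:EXDEXPAT} and Remark~\ref{rem:closed}; this is the same argument at a finer resolution.
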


\begin{proof}
Let $x,y\in B$. To verify the smoothing property, note that $M(T)=PS(T)\colon B\to X_n$ and $X_n$ is compactly embedded into $X$. The Lipschitz continuity of $PS(T)$ on $B$ yields
$$\norm{M(T)x-M(T)y}_{X_n}=\norm{PS(T)x-PS(T)y}_{X_n}= \norm{PS(T)x-PS(T)y}_{X}\leq \kappa\norm{x-y}_{X},$$
which shows that $M(T)$ satisfies \eqref{eq:smoothingRadek} with $Z=X_n$. Moreover, by assumption $C(T)$ is a~contraction on $B$ in $X$.

To show the generalized squeezing property we observe that 
\begin{align}\label{eq:Lad_gen_squ}
\begin{split}
\norm{S(T)x-S(T)y}_{X}&\leq\norm{(I-P)S(T)x-(I-P)S(T)y}_{X}+\norm{PS(T)x-PS(T)y}_{X}\\
&\leq \eta\norm{x-y}_{X}+\norm{PS(T)x-PS(T)y}_{X_n}
\end{split}
\end{align}
and hence, 
\eqref{e:BANACHSQU2} holds with $\mu=1$.
The statement now follows from Propositions \ref{prop:SQ2QUASIBANACH} and \ref{prop:QUASISQIMPROVEMENT} by taking $\mu=1$. The final claim is a consequence of Theorem~\ref{thm:SQEXPIMPROVED}.
\end{proof}

We observe that a semigroup satisfying the smoothing property from Definition 5.1 with (5.2) in a \emph{Hilbert} space is of Ladyzhenskaya type; hence it is also a squeezing semigroup by Remark 7.2 (ii).  

\begin{prop}
If a semigroup $\{S(t)\colon t\geq 0\}$ on a nonempty subset $V$ of a Hilbert space $(X,\|\cdot\|_{X})$ satisfies the smoothing property 
\eqref{eq:contraction}, \eqref{eq:smoothingRadek} on a subset $B$ of $V$ at time $T>0$ with parameters $(\eta,\kappa)$, 
then the semigroup is of Ladyzhenskaya type on $B$ at time $T$.
\end{prop}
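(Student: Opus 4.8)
The plan is to exploit the compact embedding $Z\hookrightarrow X$ in order to replace the (possibly infinite-dimensional) smoothing part $M(T)$ in the decomposition $S(T)=C(T)+M(T)$ by its orthogonal projection onto a suitable finite-dimensional subspace of $X$, absorbing the resulting error into the contractive part $C(T)$. The single fact about the compact embedding that I would invoke is standard: for every $\delta>0$ there is a finite-dimensional subspace $X_n\subseteq X$ such that
$$\|(I-P_n)z\|_X\leq\delta\|z\|_Z,\quad z\in Z,$$
where $P_n$ denotes the orthogonal projection of the Hilbert space $X$ onto $X_n$. This follows by covering the precompact set $\overline{B}^{Z}(0,1)$ by finitely many $X$-balls of radius $\delta$ with centers $z_1,\dots,z_m\in X$, setting $X_n=\lin\{z_1,\dots,z_m\}$, and noting that $\|(I-P_n)z\|_X=\dist^{X}(z,X_n)\leq\delta$ for $z\in\overline{B}^{Z}(0,1)$, after which homogeneity extends the estimate to all $z\in Z$.

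Given this, I would fix $\delta>0$ with $\delta\kappa<1-\eta$ (possible since $\eta\in[0,1)$; any $\delta$ works when $\kappa=0$), take the corresponding $X_n$ and $P_n$, and define $P:=P_n|_V\colon V\to X_n$. Since $P_n$ is an orthogonal projection of the Hilbert space $X$, both $\|P_n\|_{\mathcal{L}(X,X)}\leq1$ and $\|I-P_n\|_{\mathcal{L}(X,X)}\leq1$, and this is the only point at which the Hilbert structure is used. Condition (b) of Definition~\ref{defn:LADY} then holds because $S(T)$ is Lipschitz on $B$ — by \eqref{eq:contraction}, \eqref{eq:smoothingRadek} and the continuity of $Z\hookrightarrow X$ with some constant $c_0$, one gets $\|S(T)x-S(T)y\|_X\leq(\eta+c_0\kappa)\|x-y\|_X$ — and hence so is $PS(T)=P_nS(T)$. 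For condition (a), I would split
$$(I-P)S(T)x-(I-P)S(T)y=(I-P_n)(C(T)x-C(T)y)+(I-P_n)(M(T)x-M(T)y),$$
bound the first term by $\eta\|x-y\|_X$ using \eqref{eq:contraction} together with $\|I-P_n\|_{\mathcal{L}(X,X)}\leq1$, and bound the second by $\delta\|M(T)x-M(T)y\|_Z\leq\delta\kappa\|x-y\|_X$ using the displayed approximation property of $P_n$ and \eqref{eq:smoothingRadek}. Adding the two estimates yields \eqref{e:LADY} with contraction rate $\eta+\delta\kappa<1$, so the semigroup is of Ladyzhenskaya type on $B$ at time $T$ with parameters $(n,\eta+\delta\kappa,\eta+c_0\kappa)$.

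The only substantive step is the extraction of the finite-dimensional subspace $X_n$ in the first paragraph; once that is available, verifying (a) and (b) is just the triangle inequality combined with the fact that orthogonal projections and their complements are norm-nonincreasing, so I do not anticipate any genuine obstacle beyond bookkeeping with the constants. Combined with Proposition~\ref{pro:LAD_SQ_SM}, this establishes that, in a Hilbert space, the smoothing property \eqref{eq:contraction}, \eqref{eq:smoothingRadek} and the Ladyzhenskaya property are equivalent.
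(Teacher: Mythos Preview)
Your proposal is correct and follows essentially the same argument as the paper: both cover the precompact set $\overline{B}^{Z}(0,1)$ in $X$ by finitely many small balls, take $X_n$ to be the span of the centers, and use the orthogonal projection $P_n$ together with $\|(I-P_n)z\|_X\leq\delta\|z\|_Z$ to absorb the error from $M(T)$ into the contraction. The only cosmetic difference is that the paper chooses the centers in $\overline{B}^{Z}(0,1)$ (so that $X_n\subseteq Z$), whereas you allow centers in $X$; this has no effect on the argument since only the distance estimate $\operatorname{dist}^{X}(z,X_n)\leq\delta$ is needed.
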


\begin{proof}
Let $\eps>0$ be so small that $\eta+\eps\kappa<1$. Since the unit ball $\overline{B}^{Z}(0,1)$ in the normed space $Z$ is precompact in $X$, we have
$$\overline{B}^{Z}(0,1)\subseteq\bigcup_{i=1}^{p}B^{X}(x_i,\eps)$$
for some $x_i\in\overline{B}^{Z}(0,1)$. We consider
$$X_n=\lin\{x_1,\ldots,x_p\}\subseteq Z\subseteq X$$
and an orthogonal projection $P_n\colon X\to X_n$, where $n$ indicates the dimension of the finite-dimensional space $X_n$. Then we have
$$\norm{(I-P_n)z}_{X}=\inf_{x\in X_n}\norm{z-x}_{X}<\eps,\ z\in \overline{B}^{Z}(0,1),$$
which implies that
\begin{equation}\label{e:SMALLIMINUSP}
\norm{(I-P_n)z}_{X}\leq\eps\norm{z}_{Z},\ z\in Z.
\end{equation} 
By \eqref{eq:contraction} and \eqref{eq:smoothingRadek} we get for $x,y\in B$
\begin{equation*}
\begin{split}
\norm{P_n(S(T)x-S(T)y)}_{X}&\leq\norm{S(T)x-S(T)y}_{X}\leq\norm{C(T)x-C(T)y}_{X}\\
&+\norm{M(T)x-M(T)y}_{X}\leq(\eta+c_{Z,X}\kappa)\norm{x-y}_{X}
\end{split}
\end{equation*}
with the embedding constant $c_{Z,X}$, whereas by \eqref{eq:contraction}, \eqref{eq:smoothingRadek} and \eqref{e:SMALLIMINUSP} we obtain
\begin{equation*}
\begin{split}
\norm{(I-P_n)(S(T)x-S(T)y)}_{X}&\leq\norm{C(T)x-C(T)y}_{X}+\norm{(I-P_n)(M(T)x-M(T)y)}_{X}\\
&\leq\eta\norm{x-y}_{X}+\eps\norm{M(T)x-M(T)y}_{Z}\leq(\eta+\eps\kappa)\norm{x-y}_{X},
\end{split}
\end{equation*} 
which shows that the semigroup is of Ladyzhenskaya type on $B$ at time $T$ with parameters $(n,\eta+\eps\kappa,\eta+c_{Z,X}\kappa)$.
\end{proof}

Similarly as in the previous section for squeezing semigroups, if the phase space is a Hilbert space, we can even further improve the parameters in the covering condition in Proposition \ref{pro:LAD_SQ_SM} exploiting the property (a) in Definition~\ref{defn:LADY}. Note that we obtain a larger range for $\sigma$  and a~smaller value for $q$ in the following proposition. 

\begin{prop}\label{prop:LAD}
Let $\{S(t)\colon t\geq 0\}$ be a semigroup on a nonempty subset $V$ of a~Hilbert space $(X,\|\cdot\|_{X})$ over $\K\in\{\R,\C\}$, $T>0$ 
and let $B$ be a bounded subset of $V$ such that $S(T)B\subseteq B$.
If the semigroup is of Ladyzhenskaya type on $B$ at time $T$ with parameters $(n,\eta,\kappa)$ with an orthogonal projection $P\colon X\to X_n$ onto an $n$-dimensional subspace $X_n$ of $X$, then for any $\sigma \in (0,\sqrt{1-\eta^2})$
the covering condition \eqref{e:CONDBALLSSEMB2} holds with
$q=\sqrt{\sigma^2+\eta^2}$ and 
$$h=\left(1+\tfrac{2\kappa}{\sigma}\right)^{\mathbf{n}}$$
with $\mathbf{n}$ defined in \eqref{e:ELL}.
\end{prop}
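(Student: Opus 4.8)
The plan is to mimic the proof of Theorem~\ref{thm:QUASI} and Proposition~\ref{prop:QUASISQIMPROVEMENT}, but to replace the triangle-inequality splitting of $\|S(T)x-S(T)y\|_X$ used there by a Pythagorean decomposition, which is available because $P$ is an \emph{orthogonal} projection. For $x,y\in B$ write $u=S(T)x-S(T)y$, so that $\|u\|_X^2=\|Pu\|_X^2+\|(I-P)u\|_X^2$. By~\eqref{e:LADY} we have $\|(I-P)u\|_X\leq\eta\,d(x,y)$, and by (b) the map $PS(T)$ is $\kappa$-Lipschitz. The key point is that these two pieces live in \emph{orthogonal} directions, so one should use as pseudometric on $B$ the quantity $\rho(x,y)=\|PS(T)x-PS(T)y\|_X$, which is a compact (finite-dimensional) seminorm, exactly as in Proposition~\ref{prop:QUASISQIMPROVEMENT}.

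\textbf{Key steps.}
First I would set up the precompact pseudometric space $(B,\rho)$ with $\rho(x,y)=\|PS(T)x-PS(T)y\|_X$; by (b) it satisfies $\rho(x,y)\leq\kappa\,d(x,y)$ and, being a seminorm on the $n$-dimensional $X_n$, it is compact. Next I would verify a Lipschitz-type inequality of the form~\eqref{e:MAININEQ} adapted to the Hilbert setting: rather than $d(S(T)x,S(T)y)\leq\eta d(x,y)+\rho(x,y)$ with triangle inequality, I would keep track of the Pythagorean relation and argue at the level of squared norms. Concretely, I would follow the volume-comparison argument in the proof of Proposition~\ref{prop:QUASISQIMPROVEMENT}: fix $\eps>0$ and $\emptyset\neq F\subseteq B$ with $\diam^X(F)\leq2\eps$, take a maximal $\sigma\eps$-distinguishable set $\{y_1,\dots,y_{m_F}\}$ in $(B,\rho)$, and push forward by $S(T)$. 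The images $z_j=S(T)y_j$ satisfy $\|Pz_j-Pz_l\|_X\geq\sigma\eps$ for $j\neq l$ (distinguishability), while $\|(I-P)z_j-(I-P)z_l\|_X\leq\eta\cdot 2\eps$ by~\eqref{e:LADY}; hence the points $z_j$ lie in a region whose projection onto $X_n$ has diameter at most $2\eps\kappa$ and whose $(I-P)$-component has diameter at most $2\eta\eps$. Using orthogonality, the whole set $\{z_j\}$ has diameter bounded by $2\eps\sqrt{\kappa^2+\eta^2}$... more precisely, the relevant containment for volume comparison in the $n$-dimensional space $X_n$ gives disjoint balls of radius $\tfrac{\sigma\eps}{2}$ centered at the $Pz_j$, contained in a $\tfrac{\sigma\eps}{2}$-neighborhood of $P(S(T)F)$, which has diameter $\leq 2\eps\kappa$; the isodiametric inequality then yields $m_F\leq(1+\tfrac{2\kappa}{\sigma})^{\mathbf{n}}$ after identifying $X_n$ isometrically with $\R^{\mathbf n}$. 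Finally I would invoke Lemma~\ref{lem:FUNDAMENTAL} (or rerun its argument) with $\eta$ replaced effectively by the Hilbert-contraction rate: the key improvement is that $\widehat N^X(S(T)F,(\sqrt{\eta^2+\sigma^2})\eps)$ is controlled, because when we shrink $F$ to the cells $C_j^i$ of $\rho$-radius $\sigma\eps$, the image $S(T)C_j^i$ has $(I-P)$-diameter $\leq2\eta\eps$ and $P$-diameter $\leq2\sigma\eps$, hence Euclidean diameter $\leq2\sqrt{\eta^2+\sigma^2}\,\eps$ by Pythagoras. Iterating $k$ times gives $\widehat N^X(S(kT)B,(\sqrt{\eta^2+\sigma^2})^k\tfrac{R}{2})\leq h^k$ with $h=(1+\tfrac{2\kappa}{\sigma})^{\mathbf n}$, and passing from $\widehat N$ to $N$ yields~\eqref{e:CONDBALLSSEMB2} with $q=\sqrt{\sigma^2+\eta^2}$, $a=\tfrac{3R}{2}$, $b=1$, $k_0=1$; one checks $\sigma\in(0,\sqrt{1-\eta^2})$ is exactly the condition $q<1$.

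\textbf{Main obstacle.}
The genuinely new ingredient compared to Proposition~\ref{prop:QUASISQIMPROVEMENT} is making the Pythagorean gain rigorous inside the covering lemma: Lemma~\ref{lem:FUNDAMENTAL} as stated produces a factor $(\eta+\sigma)\eps$ via the triangle inequality $d(S(x),S(y))\leq\eta d(x,y)+2\sigma\eps$, but here we want $\sqrt{\eta^2+\sigma^2}\,\eps$. So the hard part is re-deriving the diameter bound for $S(T)C_j^i$ directly: on the cell $C_j^i$ one has $\diam^X(C_j^i)\leq 2\eps$ (so $\|(I-P)(S(T)x-S(T)y)\|_X\leq 2\eta\eps$ by~\eqref{e:LADY}) and $\rho$-diameter $<2\sigma\eps$ (so $\|P(S(T)x-S(T)y)\|_X<2\sigma\eps$), and orthogonality then gives $\diam^X(S(T)C_j^i)\leq 2\sqrt{\eta^2+\sigma^2}\,\eps$, which is better than $2(\eta+\sigma)\eps$. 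This forces a small standalone variant of Lemma~\ref{lem:FUNDAMENTAL} rather than a black-box application; the counting of cells ($m_F\leq(1+\tfrac{2\kappa}{\sigma})^{\mathbf n}$) is unchanged from Proposition~\ref{prop:QUASISQIMPROVEMENT}. Everything else is bookkeeping: the induction on $k$, the identification of $X_n$ with $\R^{\mathbf n}$ via an isometry (here $d_{BM}=1$ since $X_n$ is a subspace of a Hilbert space), and the final conversion $N^X(\cdot,3\eps)\leq\widehat N^X(\cdot,\eps)$.
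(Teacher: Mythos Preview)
Your proposal is correct and follows essentially the same route as the paper's proof: set up the pseudometric $\rho(x,y)=\|P(S(T)x-S(T)y)\|_X$, reuse the volume/isodiametric count from Proposition~\ref{prop:QUASISQIMPROVEMENT} (with $d_{BM}(X_n,\ell^n_2)=1$) to get $c_\rho(A,\eps,\sigma\eps)\leq(1+\tfrac{2\kappa}{\sigma})^{\mathbf n}$, and then replace the triangle-inequality step in the proof of Lemma~\ref{lem:FUNDAMENTAL} by the Pythagorean identity to obtain $\diam^X(S(T)C_j^i)\leq 2\eps\sqrt{\sigma^2+\eta^2}$, yielding the refined estimate $\widehat N^X(S(T)A,\sqrt{\sigma^2+\eta^2}\,\eps)\leq\widehat N^X(A,\eps)\,c_\rho(A,\eps,\sigma\eps)$ before iterating as in Theorem~\ref{thm:QUASI}. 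The paper does exactly this, and your identification of the ``main obstacle'' matches the one nontrivial modification the paper spells out.
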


\begin{proof}
We proceed exactly as in the proof of Proposition~\ref{prop:QUASISQIMPROVEMENT} treating $B$ as a precompact pseudometric space with the pseudometric
$$\rho(x,y)=\norm{P(S(T)x-S(T)y)}_{X},\ x,y\in B.$$
This implies that for any nonempty $A\subseteq B$ and $\sigma,\eps>0$ we have
\begin{equation}\label{e:ESTCRHO}
c_\rho(A,\eps,\sigma\eps)\leq \left(1+\tfrac{2\kappa}{\sigma}\right)^{\mathbf{n}}.
\end{equation} 
Instead of directly applying Lemma~\ref{lem:FUNDAMENTAL} we use the following refinement: If $\widehat{N}^{V}(A,\eps)<\infty$ for a subset $A\subseteq B$, then for any $\sigma>0$ it holds that
\begin{equation}\label{e:CRUCIALLADY}
\widehat{N}^{V}\Big(S(T)A,\sqrt{\sigma^2+\eta^2}\eps\Big)\leq \widehat{N}^{V}(A,\eps)c_\rho(A,\eps,\sigma\eps).
\end{equation}
This is a consequence of the proof of Lemma~\ref{lem:FUNDAMENTAL}, where in the last argument we 
use \eqref{e:LADY} and the Pythagorean Theorem  instead of \eqref{eq:Lad_gen_squ}. 
Indeed, for any $x,y\in C_{j}^{i}\subseteq F_i$ we have
\begin{align*}
\norm{S(T)x-S(T)y}_{X}^2&=\norm{P(S(T)x-S(T)y)}_{X}^{2}+\norm{(I-P)(S(T)x-S(T)y)}_{X}^{2}\\
&\leq 4\sigma^2\eps^2+4\eta^2\eps^2=4\eps^2(\sigma^2+\eta^2),
\end{align*}
and consequently, 
$$\diam^{V}(S(T)C_{j}^{i})\leq2\eps\sqrt{\sigma^2+\eta^2}.$$
The statement now follows from the proof of 
Theorem~\ref{thm:QUASI} using \eqref{e:ESTCRHO} and \eqref{e:CRUCIALLADY}.
\end{proof}

Finally, combining Proposition \ref{prop:LAD} with Theorem \ref{thm:EXDEXPAT} and Remark \ref{rem:closed} we obtain the following existence theorem for $T$-discrete exponential attractors.

\begin{thm}
Let $\{S(t)\colon t\geq 0\}$ be a semigroup on a nonempty closed subset $V$ of a Hilbert space $(X,\|\cdot\|_{X})$ over $\K\in\{\R,\C\}$ and $\mathbf{B}\subseteq V$ be a bounded absorbing set for the semigroup. Moreover, let $\{S(t)\colon t\geq 0\}$ be asymptotically closed or $\mathbf{B}$ be closed. 

If the semigroup is of Ladyzhenskaya type on $\mathbf{B}$ at time $T$ with parameters $(n,\eta,\kappa)$ with an orthogonal projection $P$ onto an $n$-dimensional subspace $X_n$ of $X$, then for any  $\sigma \in (0,\sqrt{1-\eta^2})$ there exists a~$T$-discrete exponential attractor  $\mathbf{M_0}\subseteq \mathbf{B}$ for the semigroup and
\begin{equation*}
\dimf^{V}(\mathbf{M_0})\leq \mathbf{n}\log_{\tfrac{1}{ \sqrt{\sigma^2+\eta^2}}}\left(1+\tfrac{2\kappa}{\sigma}\right).
\end{equation*}
If the semigroup is asymptotically closed, then it has a global attractor $\mathbf{A}$  contained in $\mathbf{M_0}$.
\end{thm}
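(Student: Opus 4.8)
The plan is to assemble the statement from Proposition~\ref{prop:LAD} and the existence criterion of Theorem~\ref{thm:EXDEXPAT}, using Remark~\ref{rem:closed} in the variant where $\mathbf{B}$ is assumed closed rather than the semigroup asymptotically closed. First I would invoke Remark~\ref{rem:POSINV} to replace the given bounded absorbing set by a positively invariant bounded absorbing set, still denoted $\mathbf{B}$, which then satisfies $S(T)\mathbf{B}\subseteq\mathbf{B}$ — the standing hypothesis of Proposition~\ref{prop:LAD} — while preserving any covering estimate up to enlarging constants.

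Next I would apply Proposition~\ref{prop:LAD} to this set: since the semigroup is of Ladyzhenskaya type on $\mathbf{B}$ at time $T$ with parameters $(n,\eta,\kappa)$ and orthogonal projection $P$ onto the $n$-dimensional subspace $X_n$ of $X$, for every $\sigma\in(0,\sqrt{1-\eta^2})$ the covering condition~\eqref{e:CONDBALLSSEMB} holds with
$$q=\sqrt{\sigma^2+\eta^2}\in(0,1),\qquad h=\Bigl(1+\tfrac{2\kappa}{\sigma}\Bigr)^{\mathbf{n}}\geq 1,$$
some $k_0\in\N$ and $a,b>0$, where $\mathbf{n}$ is as in~\eqref{e:ELL}.

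It then remains to feed this covering condition into the abstract machinery of Section~\ref{sec:EXPO}. The phase space $V$, being a closed subset of the Hilbert space $X$, is a complete metric space, so if the semigroup is asymptotically closed, Theorem~\ref{thm:EXDEXPAT} applies directly and produces a $T$-discrete exponential attractor $\mathbf{M_0}=\mathbf{A}\cup\mathbf{E_0}=\cl_V\mathbf{E_0}\subseteq\mathbf{B}$ with rate of attraction any $\xi\in\bigl(0,\tfrac1T\ln\tfrac1q\bigr)$, together with the global attractor $\mathbf{A}=\Lambda^V(\mathbf{M_0})\subseteq\mathbf{M_0}$, and
$$\dimf^{V}(\mathbf{M_0})\leq\log_{\frac1q}h=\mathbf{n}\log_{\frac{1}{\sqrt{\sigma^2+\eta^2}}}\Bigl(1+\tfrac{2\kappa}{\sigma}\Bigr).$$
If instead $\mathbf{B}$ is assumed closed, I would note that conditions (a)--(b) of Definition~\ref{defn:LADY} make $S(T)$ Lipschitz on $\mathbf{B}$, namely $\|S(T)x-S(T)y\|_X\leq(\eta+\kappa)\|x-y\|_X$ by the triangle inequality; hence $S(T)$ is continuous, therefore a closed map, on any subset of $\mathbf{B}$, in particular on the closure of the positively invariant absorbing set produced above, which is contained in $\mathbf{B}$. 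Remark~\ref{rem:closed} then yields the same $\mathbf{M_0}\subseteq\mathbf{B}$ with identical rate and dimension bound, now without using asymptotic closedness and correspondingly without the global attractor claim.

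Since the statement is essentially a bookkeeping combination of results already established in the excerpt, I do not expect a genuine obstacle. The only points deserving a line of care are the reduction to a positively invariant absorbing set, needed so that Proposition~\ref{prop:LAD} is literally applicable, and, in the second alternative, the observation that $S(T)$ is a closed map on the relevant closure — both entirely routine.
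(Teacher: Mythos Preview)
Your proposal is correct and follows exactly the approach the paper itself takes: the theorem is stated there as an immediate combination of Proposition~\ref{prop:LAD} with Theorem~\ref{thm:EXDEXPAT} and Remark~\ref{rem:closed}, and you have filled in the routine details (passing to a positively invariant absorbing subset via Remark~\ref{rem:POSINV}, and observing that the Ladyzhenskaya estimates make $S(T)$ Lipschitz so that Remark~\ref{rem:closed} applies when $\mathbf{B}$ is closed) accurately.
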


\section{Construction for $C^1$ semigroups with global attractors}\label{sec:C1}

We present another method to construct $T$-discrete exponential attractors, which was developed by Y.~S.~Zhong, C.~K.~Zhong in \cite{ZZ12} and is based on an earlier paper by L.~Dung, B.~Nicolaenko \cite{DN01}, see also \cite{CLR10, CCLR22}. It requires the existence of a global attractor $\mathbf{A}$, the continuous differentiability of the map $S(T)$ in a neighborhood of $\mathbf{A}$ and a special structure of the derivatives of $S(T)$. More precisely, we assume the following:  

\begin{assc1}
Let $\{S(t)\colon t\geq 0\}$ be a semigroup on a normed space $(X,\norm{\cdot})$ over $\K\in\{\R,\C\}$ with a global attractor $\mathbf{A}$. 
We assume that for some $T>0$ the map $S(T)$ is $C^1$ on a~$\delta_0$-neighborhood of $\mathbf{A}$, 
$$B_{\delta_0}(\mathbf{A})=\bigcup_{x\in\mathbf{A}}B^{X}(x,\delta_0)$$
with some $\delta_0>0$, and there exists $\lambda\in(0,\frac{1}{4})$ such that for any $y\in B_{\delta_0}(\mathbf{A})$  the derivative $D_yS(T)$ decomposes as 
\begin{equation}\label{e:SMALLLAM}
D_{y}S(T)=K_{y}+C_{y},\ K_{y}\in\mathcal{L}(X)\text{ is compact},\ C_{y}\in\mathcal{L}(X),\ \norm{C_y}<\lambda.
\end{equation}
\end{assc1}

We start with a basic observation from \cite[Lemma 2.4]{CLR10}.

\begin{lem}\label{lem:APPROXFINITE}
Let $K\in\mathcal{L}(X)$ be a compact operator and $C\in\mathcal{L}(X)$ be a bounded operator in a normed space $(X,\norm{\cdot})$.
For any $\mu>\norm{C}$ there exists $m\in\N_0$ and an $m$-dimensional subspace $F$ of $X$ such that
\begin{equation*}
\sup_{w\in X, \norm{w}\leq 1}\inf_{z\in F, \norm{z}\leq 1}\norm{(K+C)(w-z)}<2\mu. 
\end{equation*}
\end{lem}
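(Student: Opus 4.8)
The statement concerns an operator $K+C$ on a normed space $X$ where $K$ is compact and $\|C\|<\mu$. The goal is to approximate the unit ball, after applying $K+C$, uniformly by a finite-dimensional slice. The natural strategy is to treat the two summands separately: the contribution of $C$ is controlled directly by its norm (choosing $z=0$ gives $\|C(w-0)\|=\|Cw\|\leq\|C\|<\mu$), so the work is entirely in handling the compact part $K$.

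First I would recall that a compact operator maps the closed unit ball $\overline{B}^{X}(0,1)$ to a precompact set $K(\overline{B}^{X}(0,1))$ in $X$. Hence for any $\varepsilon>0$ there is a finite $\varepsilon$-net $\{K w_1,\ldots,K w_m\}$ (with $\|w_i\|\leq 1$) covering this image. Set $F=\operatorname{span}\{w_1,\ldots,w_m\}$, an $m$-dimensional subspace with $m\in\N_0$ (taking $F=\{0\}$, $m=0$, if $K=0$). Then for any $w$ with $\|w\|\leq 1$ there is an index $i$ with $\|Kw-Kw_i\|<\varepsilon$, i.e. $\|K(w-w_i)\|<\varepsilon$, and $w_i\in F$ with $\|w_i\|\leq 1$. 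Choosing $\varepsilon=\mu-\|C\|>0$, this $z:=w_i$ satisfies $\|z\|\leq 1$ and
$$\|(K+C)(w-z)\|\leq\|K(w-z)\|+\|C(w-z)\|<\varepsilon+\|C\|\cdot\|w-z\|\leq (\mu-\|C\|)+\|C\|\cdot 2=2\mu-\|C\|<2\mu,$$
using $\|w-z\|\leq\|w\|+\|z\|\leq 2$. Taking the supremum over $w$ and then recognizing that the infimum over $z\in F$ with $\|z\|\leq 1$ is no larger than the value at this particular $z$ yields the claim.

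The only subtle point — and really the only place where care is needed — is the estimate $\|C(w-z)\|\leq\|C\|\cdot\|w-z\|\leq 2\|C\|$: one cannot use $\|C\|<\mu$ directly here since the factor of $2$ from $\|w-z\|\leq 2$ would overshoot. Instead one must keep $\varepsilon$ and $\|C\|$ separate and exploit that their sum plus the extra $\|C\|$ is still below $2\mu$ precisely because $\varepsilon$ was chosen as $\mu-\|C\|$; equivalently $\varepsilon+2\|C\|=\mu+\|C\|<2\mu$. So the main (mild) obstacle is bookkeeping the constants so that the $2$ in front of $\mu$ is genuinely not exceeded; everything else is the standard precompactness-of-the-image argument for compact operators.
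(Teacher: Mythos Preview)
Your argument is correct. The bookkeeping is exactly right: for each $w$ in the closed unit ball you exhibit $z=w_i\in F$ with $\|z\|\leq 1$ and $\|(K+C)(w-z)\|<\varepsilon+2\|C\|=\mu+\|C\|$, so the supremum over $w$ of the infimum over $z$ is at most $\mu+\|C\|<2\mu$, and the strict inequality in the conclusion comes from $\|C\|<\mu$, not from the net being open. The remark that $F=\{0\}$, $m=0$ works when $K=0$ is a nice touch.

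Your route is genuinely different from the paper's. The paper argues by contradiction: assuming no finite-dimensional $F$ works, it inductively builds a sequence $x_j$ in the closed unit ball with $\|(K+C)(x_j-x_l)\|>\mu+\|C\|$ for $j\neq l$, whence $\|Kx_j-Kx_l\|>\mu-\|C\|>0$, contradicting the precompactness of $\{Kx_j\}$. Your proof is constructive: you exhibit $F$ directly as the span of preimages of a finite $(\mu-\|C\|)$-net for $K(\overline{B}^X(0,1))$. Both arguments pivot on the same fact---total boundedness of the image of the unit ball under $K$---but yours yields an explicit $F$ with a controllable dimension (namely the covering number of $K(\overline{B}^X(0,1))$ at scale $\mu-\|C\|$), which is arguably more informative for the downstream use in the paper where one later needs to bound $\nu_\lambda(D_yS(T))$. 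The paper's contradiction argument is a shade shorter but non-constructive.
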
 

\begin{proof}
Suppose contrary to the claim that for some $\mu>\norm{C}$, any $m\in\N_0$ and any $m$-dimensional subspace $F$ of $X$ we have
\begin{equation}\label{e:SMALLLAM2CONTRA}
\sup_{w\in X, \norm{w}\leq 1}\inf_{z\in F, \norm{z}\leq 1}\norm{(K+C)(w-z)}\geq 2\mu. 
\end{equation}
Let $x_1\in X$ be such that $\norm{x_1}\leq 1$ and set $F=\lin\{x_1\}$. Then $\dim{F}\leq 1$ and by \eqref{e:SMALLLAM2CONTRA} there exists $x_2\in X$, $\norm{x_2}\leq 1$ such that $\norm{(K+C)(x_2-x_1)}>\mu+\norm{C}$. Taking $F=\lin\{x_1,x_2\}$ we have $\dim{F}\leq 2$ and again by \eqref{e:SMALLLAM2CONTRA} there exists $x_3\in X$, $\norm{x_3}\leq 1$ such that $\norm{(K+C)(x_3-x_i)}>\mu+\norm{C}$ for $i=1,2$. By induction, there exists a~sequence $x_j\in X$, $\norm{x_j}\leq 1$ such that $\norm{(K+C)(x_j-x_l)}>\mu+\norm{C}$ for $j\neq l$.
Thus we get
$$\mu+\norm{C}<\norm{K x_j-K x_l}+\norm{C}\norm{x_j-x_l}\leq \norm{K x_j-K x_l}+2\norm{C},\ j\neq l,$$
and we have $\norm{K x_j-K x_l}>\mu-\norm{C}>0$ for $j\neq l$.
Since the sequence $K x_j$ contains a~Cauchy subsequence, we get a~contradiction.
\end{proof}

Let Assumption~$C^1$ hold.
Since by \eqref{e:SMALLLAM} $\lambda>\norm{C_y}$ for all $y\in B_{\delta_0}(\mathbf{A})$, Lemma~\ref{lem:APPROXFINITE} implies that
for any $y\in B_{\delta_0}(\mathbf{A})$ there exists $m\in\N_0$ and an $m$-dimensional subspace $F_{y}$ of $X$ such that
\begin{equation}\label{e:SMALLLAM2}
\sup_{w\in X, \norm{w}\leq 1}\inf_{z\in F_y, \norm{z}\leq 1}\norm{D_y S(T)(w-z)}<2\lambda. 
\end{equation}
Therefore the following number is well-defined: 
\begin{equation*}
\nu_{\lambda}(D_y S(T))=\min\{m\in\N_0\colon\dim{F_y}=m, F_y\text{ is a subspace of }X\text{ satisfying }\eqref{e:SMALLLAM2}\}.  
\end{equation*}

Next we estimate the number of open balls of smaller radius needed to cover a given closed ball in a finite-dimensional space using the Banach-Mazur distance. 

\begin{lem}\label{lem:FINITE}
Let $X_n$ be an $n$-dimensional subspace of a normed space $X$ over $\K\in\{\R,\C\}$. Then for $0<\eps<r$ we have
\begin{equation}\label{e:MINIMALCOVERFINITE}
\overline{B}^{X_n}(0,r)\subseteq\bigcup_{i=1}^{h}B^{X_n}(y_i,\eps)
\end{equation}
for some $y_1,\ldots,y_h\in X_n$, where
\begin{equation}\label{e:MINIMALCOVERFINITEESTIMATE}
h\leq\Bigl(1+\frac{2d_{BM}(X_n,\ell^{n}_2)r}{\eps}\Bigr)^{\mathbf{n}}\leq\Bigl(1+\frac{2\sqrt{n}r}{\eps}\Bigr)^{\mathbf{n}}
\end{equation}
with $\mathbf{n}$ given in \eqref{e:ELL}.
\end{lem}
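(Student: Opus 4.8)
The plan is to produce the covering in \eqref{e:MINIMALCOVERFINITE} from a maximal $\eps$-separated subset of $\overline{B}^{X_n}(0,r)$ and then bound its cardinality by a volume comparison in Euclidean space, exactly as in the estimate \eqref{e:BOUNDOFM} and the proof of Proposition~\ref{prop:SQ2QUASIBANACH}. Since $X_n$ is finite-dimensional, $\overline{B}^{X_n}(0,r)$ is totally bounded, so there is a finite upper bound on the cardinality of its $\eps$-separated subsets; I would fix a maximal one $\{y_1,\ldots,y_h\}\subseteq\overline{B}^{X_n}(0,r)$, i.e.\ with $\norm{y_i-y_j}_{X}\geq\eps$ for $i\neq j$. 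Maximality gives \eqref{e:MINIMALCOVERFINITE} at once: if some $z\in\overline{B}^{X_n}(0,r)$ satisfied $\norm{z-y_i}_{X}\geq\eps$ for every $i$, then $\{y_1,\ldots,y_h,z\}$ would be a strictly larger $\eps$-separated set, a contradiction.

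It remains to estimate $h$. Fix an arbitrary linear isomorphism $\Lambda\colon X_n\to\ell^n_2$ and let $R\colon\ell^n_2\to\R^{\mathbf{n}}$ be the canonical real isometry --- the identity when $\K=\R$, and the isometric identification of $\C^n$ with $\R^{2n}$ when $\K=\C$ --- so that $\mathbf{n}$ is as in \eqref{e:ELL}. Setting $x_i=R(\Lambda y_i)\in\R^{\mathbf{n}}$, we get $\abs{x_i}_2\leq\norm{\Lambda}_{\mathcal{L}(X_n,\ell^n_2)}\,r$ and $\abs{x_i-x_j}_2\geq\tfrac{\eps}{\norm{\Lambda^{-1}}_{\mathcal{L}(\ell^n_2,X_n)}}$ for $i\neq j$. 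Hence the open balls $B^{\R^{\mathbf{n}}}\bigl(x_i,\tfrac{\eps}{2\norm{\Lambda^{-1}}_{\mathcal{L}(\ell^n_2,X_n)}}\bigr)$ are pairwise disjoint and all contained in $B^{\R^{\mathbf{n}}}\bigl(0,\norm{\Lambda}_{\mathcal{L}(X_n,\ell^n_2)}r+\tfrac{\eps}{2\norm{\Lambda^{-1}}_{\mathcal{L}(\ell^n_2,X_n)}}\bigr)$, and comparing Euclidean volumes (as in \eqref{e:BOUNDOFM}) gives
$$h\leq\Bigl(1+\frac{2\,\norm{\Lambda}_{\mathcal{L}(X_n,\ell^n_2)}\,\norm{\Lambda^{-1}}_{\mathcal{L}(\ell^n_2,X_n)}\,r}{\eps}\Bigr)^{\mathbf{n}}.$$

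Since $h$ is independent of $\Lambda$, I would finish by taking the infimum of the right-hand side over all linear isomorphisms $\Lambda\colon X_n\to\ell^n_2$, which by the definition of $d_{BM}(X_n,\ell^n_2)$ yields the first inequality in \eqref{e:MINIMALCOVERFINITEESTIMATE}; the second inequality is then immediate from F.~John's theorem \eqref{e:JOHN}. The argument is entirely routine --- a separated-set count plus volume comparison --- and the only point that needs some care is the complex case, where one must pass from $\ell^n_2$ over $\C$ to $\R^{2n}$ through a genuine real isometry so that both the Euclidean volume count and the exponent $\mathbf{n}=2n$ come out correctly.
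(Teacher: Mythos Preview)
Your proof is correct and follows essentially the same route as the paper's: both arguments transport the problem to $\R^{\mathbf{n}}$ via an isomorphism $\Lambda\colon X_n\to\ell^n_2$ and the real isometry $R$, then use the volume comparison \eqref{e:BOUNDOFM} and take the infimum over $\Lambda$ to get the Banach--Mazur distance. The only cosmetic difference is that you start from a maximal $\eps$-separated set in $\overline{B}^{X_n}(0,r)$ and push it forward, whereas the paper starts from a maximal separated set in the Euclidean ball and pulls the resulting covering back; the core estimate and the conclusion are identical.
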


\begin{proof}
If $n=0$ then $X_n=\{0\}$ and the result holds trivially with $h=1$, so let $n\in\N$. 
Let $h\in\N$ be the minimal number such that \eqref{e:MINIMALCOVERFINITE} holds with some $y_1,\ldots, y_h\in X_n$.
We consider an arbitrary isomorphism $\Lambda\colon X_n\to\ell^{n}_2$ and the (real) isometry $R\colon\ell^{n}_2\to\R^{\mathbf{n}}$. We have
$$\overline{B}^{X_n}(0,r)=\Lambda^{-1}\Lambda \overline{B}^{X_n}(0,r)\subseteq\Lambda^{-1}R^{-1}\overline{B}^{\R^{\mathbf{n}}}(0,\|\Lambda\|_{\mathcal{L}(X_n,\ell^n_2)}r).$$
Let
$$m=m_{|\cdot|_2}\Big(\overline{B}^{\R^{\mathbf{n}}}(0,\|\Lambda\|_{\mathcal{L}(X_n,\ell^n_2)}r),\frac{\eps}{\|\Lambda^{-1}\|_{\mathcal{L}(\ell^n_2,X_n)}}\Big).$$
Then, as in \eqref{e:BOUNDOFM}, we get
$$m\leq\Bigl(1+\frac{2\|\Lambda\|_{\mathcal{L}(X_n,\ell^n_2)}\|\Lambda^{-1}\|_{\mathcal{L}(\ell^n_2,X_n)}r}{\eps}\Bigr)^{\mathbf{n}}$$
and for some $x_1,\ldots,x_m\in \overline{B}^{\R^{\mathbf{n}}}(0,\|\Lambda\|_{\mathcal{L}(X_n,\ell^n_2)}r)$ we have
$$\overline{B}^{\R^{\mathbf{n}}}(0,\|\Lambda\|_{\mathcal{L}(X_n,\ell^n_2)}r)\subseteq\bigcup_{i=1}^{m}B^{\R^{\mathbf{n}}}(x_i,\frac{\eps}{\|\Lambda^{-1}\|_{\mathcal{L}(\ell^n_2,X_n)}}).$$
Consequently, for some $y_1,\ldots,y_m\in X_n$ we get
$$\overline{B}^{X_n}(0,r)\subseteq\bigcup_{i=1}^{m}B^{X_n}(y_i,\eps).$$
Thus $h\leq m$ and taking the infimum over all isomorphisms $\Lambda$, we obtain \eqref{e:MINIMALCOVERFINITEESTIMATE} with the help of F.~John's Theorem, see \eqref{e:JOHN}.   
\end{proof}

\begin{thm}\label{thm:C1SEMIGROUP}
Let Assumption~$C^1$ hold with $T>0$, $\delta_0>0$ and $\lambda\in(0,\frac{1}{4})$.
For any $\sigma\in(0,1-4\lambda)$ there exist $r_0>0$, $b_0>0$ and a~bounded absorbing set $\mathbf{B}\subset B_{\delta_0}(\mathbf{A})$ such that $S(T)\mathbf{B}\subseteq\mathbf{B}$ and the covering condition \eqref{e:CONDBALLSSEMB} holds,
\begin{equation}\label{e:COVERINGC1}
N^{X}(S(kT)\mathbf{B},q^{k}r_0)\leq b_0h^{k},\ k\in\N,
\end{equation}
with $q=\sigma+4\lambda\in(0,1)$ and
\begin{equation}\label{e:ESTIMATEH}
h\leq\Bigl(1+\frac{8\sqrt{n}M}{\sigma}\Bigr)^{\mathbf{n}},
\end{equation}
where
$$n=\sup_{y\in\mathbf{B}}\nu_{\lambda}(D_y S(T)),\quad M=\sup_{y\in\mathbf{B}}\norm{D_y S(T)}$$
and $\mathbf{n}$ is given in \eqref{e:ELL}.
\end{thm}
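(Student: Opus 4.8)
The plan is to follow the pattern used elsewhere in the paper: construct a bounded, positively $T$--invariant absorbing set $\mathbf{B}$, prove a one--step covering inequality on it, and iterate; the compactness of $\mathbf{A}$ must be exploited repeatedly to compensate for the fact that a neighbourhood of $\mathbf{A}$ in an infinite--dimensional space is not precompact. \textbf{Set-up.} Since $y\mapsto D_yS(T)$ is continuous on $B_{\delta_0}(\mathbf{A})$ and $\mathbf{A}$ is compact, the modulus
$$\tilde\omega(\delta)=\sup\big\{\|D_yS(T)-D_{y'}S(T)\|:\dist(y,\mathbf{A}),\dist(y',\mathbf{A})\le\delta,\ \|y-y'\|\le\delta\big\}$$
tends to $0$ as $\delta\to0^{+}$ (a failure would contradict continuity at a limit point of $\mathbf{A}$). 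Fix $\delta_*\in(0,\delta_0)$ with $\tilde\omega(\delta_*)\le\tfrac{\sigma}{4}$, set $\rho_0=\delta_*/4$, and pick $\eps_0\in(0,\rho_0)$ small. Because $\mathbf{A}$ attracts $\overline{B_{\eps_0}(\mathbf{A})}$, there is $k_1\in\N$ with $S(kT)\overline{B_{\eps_0}(\mathbf{A})}\subseteq B_{\eps_0}(\mathbf{A})$ for $k\ge k_1$, so
$$\mathbf{B}=\bigcup_{k\ge k_1}S(kT)\overline{B_{\eps_0}(\mathbf{A})}\subseteq B_{\eps_0}(\mathbf{A})\subset B_{\delta_0}(\mathbf{A})$$
is a bounded absorbing set with $S(T)\mathbf{B}\subseteq\mathbf{B}$, hence $S(kT)\mathbf{B}\subseteq B_{\eps_0}(\mathbf{A})$ for all $k\ge0$. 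Shrinking $\eps_0$ if needed, $M=\sup_{\mathbf{B}}\|D_yS(T)\|<\infty$ and $n=\sup_{\mathbf{B}}\nu_\lambda(D_yS(T))<\infty$: indeed $\nu_\lambda(D_aS(T))<\infty$ for $a\in\mathbf{A}$ by Lemma~\ref{lem:APPROXFINITE} (as $\lambda>\|C_a\|$), and $y\mapsto\nu_\lambda(D_yS(T))$ is upper semicontinuous because a subspace realizing \eqref{e:SMALLLAM2} for $D_aS(T)$ still does so for $D_yS(T)$ once $\|D_yS(T)-D_aS(T)\|$ is small (on the relevant balls $\|w-z\|\le2$), so a finite subcover of $\mathbf{A}$ bounds $n$ near $\mathbf{A}$. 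Finally, the mean value inequality gives $\|S(T)(y+v)-S(T)y-D_yS(T)v\|\le\tilde\omega(\dist(y,\mathbf{A})+\|v\|)\|v\|$ whenever $\dist(y,\mathbf{A})+\|v\|<\delta_0$.

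\textbf{One step.} I would prove that for every $D\subseteq X$ with $\diam^X(D)\le2\eps$, $0<\eps\le\rho_0$, and $D\cap\mathbf{B}\neq\emptyset$,
$$\widehat{N}^X\big(S(T)(D\cap\mathbf{B}),q\eps\big)\le h,\qquad q=\sigma+4\lambda\in(0,1),\quad h\le\Big(1+\tfrac{8\sqrt nM}{\sigma}\Big)^{\mathbf{n}}.$$
Pick $y_0\in D\cap\mathbf{B}$, so $\dist(y_0,\mathbf{A})<\eps_0$ and $D\cap\mathbf{B}\subseteq y_0+\overline{B}^X(0,2\eps)$. For $\|v\|\le2\eps$ write $S(T)(y_0+v)=S(T)y_0+D_{y_0}S(T)v+R$ with $\|R\|\le2\eps\,\tilde\omega(\eps_0+2\eps)$ (valid since $\eps_0+2\eps\le\eps_0+2\rho_0<\delta_*$). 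Apply Lemma~\ref{lem:APPROXFINITE} to $D_{y_0}S(T)=K_{y_0}+C_{y_0}$ with $\mu=\lambda$ to obtain a subspace $F_{y_0}$ of dimension $\le n$ satisfying \eqref{e:SMALLLAM2}; then $D_{y_0}S(T)v$ lies within $4\lambda\eps$ of $\overline{B}^{G_{y_0}}(0,2\eps M)$, where $G_{y_0}=D_{y_0}S(T)F_{y_0}$ is a subspace of dimension $\le n$. By Lemma~\ref{lem:FINITE} the ball $\overline{B}^{G_{y_0}}(0,2\eps M)$ is covered by $h\le(1+8\sqrt nM/\sigma)^{\mathbf{n}}$ balls of radius $\tfrac{\sigma}{2}\eps$; combining the decompositions, $S(T)(D\cap\mathbf{B})$ is covered by $h$ sets of diameter at most $\sigma\eps+8\lambda\eps+4\eps\,\tilde\omega(\eps_0+2\eps)\le2(\sigma+4\lambda)\eps=2q\eps$, using $\tilde\omega(\eps_0+2\eps)\le\tfrac{\sigma}{4}$. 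Note $q>0$ and $q<1$ because $\sigma\in(0,1-4\lambda)$.

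\textbf{Iteration and conclusion.} Take $r_0=\rho_0$. Since $\mathbf{B}\subseteq B_{\eps_0}(\mathbf{A})$ and $\mathbf{A}$ is compact, $\widehat{N}^X(\mathbf{B},r_0)\le N^X(\mathbf{A},r_0-\eps_0)=:b_0<\infty$. If $S(kT)\mathbf{B}$ is covered by $b_0h^k$ sets of diameter $\le2q^kr_0$, then each non-empty piece $D_\alpha$ meets $S(kT)\mathbf{B}\subseteq\mathbf{B}$, so the one-step estimate with $\eps=q^kr_0\le\rho_0$ covers $S(T)(D_\alpha\cap\mathbf{B})$ by $h$ sets of diameter $\le2q^{k+1}r_0$; as $S((k+1)T)\mathbf{B}=S(T)(S(kT)\mathbf{B})\subseteq\bigcup_\alpha S(T)(D_\alpha\cap\mathbf{B})$, induction yields $\widehat{N}^X(S(kT)\mathbf{B},q^kr_0)\le b_0h^k$ for all $k\ge0$. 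Using $N^X(A,3\eps)\le\widehat{N}^X(A,\eps)$ gives $N^X(S(kT)\mathbf{B},3q^kr_0)\le b_0h^k$, which is \eqref{e:COVERINGC1} (with $3r_0$ in place of $r_0$) and $h$ bounded as in \eqref{e:ESTIMATEH}.

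\textbf{Main obstacle.} The delicate point is the iteration, because $\mathbf{B}$ is not precompact and hence, at the fine scales $q^k r_0$, the sets $S(kT)\mathbf{B}$ are not totally bounded a priori; the one-step estimate is exactly what forces the covering numbers to grow only like $h^k$, since $S(T)$ compresses the ``non-compact'' directions (the $C_y$-part is $\lambda$-small and the $K_y$-part is compact, both handled through Lemma~\ref{lem:APPROXFINITE}). To start the iteration one needs $\mathbf{B}$ to be totally bounded at the coarse scale $r_0>\eps_0$, which holds because $\mathbf{B}\subseteq B_{\eps_0}(\mathbf{A})$ with $\mathbf{A}$ compact; and to run each step one needs the linearization of $S(T)$ to be uniformly controlled, which is available only near the compact set $\mathbf{A}$ — this is precisely why $\mathbf{B}$ is built inside a small neighbourhood of $\mathbf{A}$ and why the hypothesis that a global attractor exists is indispensable. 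The same compactness of $\mathbf{A}$, through the upper semicontinuity of $y\mapsto\nu_\lambda(D_yS(T))$, is what keeps $n$ and $M$ finite.
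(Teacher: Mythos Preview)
Your argument is correct and follows essentially the same route as the paper: build a positively $T$-invariant absorbing set inside a small neighbourhood of $\mathbf{A}$, use the $C^1$ structure and \eqref{e:SMALLLAM2} to reduce the image of a small ball under $S(T)$ to a finite-dimensional ball up to an error of order $4\lambda\eps+\tfrac{\sigma}{2}\eps$, cover that finite-dimensional ball via Lemma~\ref{lem:FINITE}, and iterate. The only differences from the paper's proof are cosmetic: you package uniform continuity of $y\mapsto D_yS(T)$ near $\mathbf{A}$ into a single modulus $\tilde\omega$, whereas the paper extracts it from a finite subcover of $\mathbf{A}$; you cover in the image space $G_{y_0}=D_{y_0}S(T)F_{y_0}$ rather than in $F_{y_0}$ itself; and you iterate with $\widehat{N}^X$ rather than $N^X$. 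None of these changes the substance or the final estimate.
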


\begin{proof}
\textbf{Step 1.} 
Let $\sigma\in(0,1-4\lambda)$ and $q=\sigma+4\lambda\in(0,1)$. By the continuity of $x\mapsto D_xS(T)$ on $\mathbf{A}$, for any $x\in\mathbf{A}$ there exists $0<r_x\leq\delta_0$ such that
\begin{equation}\label{e:CONTDER}
\norm{D_yS(T)-D_xS(T)}<\min\Big\{1,\frac{\lambda-\norm{C_x}}{2},\frac{\sigma}{8}\Big\}\text{ for }y\in B^{X}(x,r_x).
\end{equation}
By the compactness of $\mathbf{A}$ there exist $x_1,\ldots,x_p\in\mathbf{A}$ such that 
$$\mathbf{A}\subseteq\bigcup_{i=1}^{p}B^{X}(x_i,\tfrac{r_{x_i}}{2}).$$
We set $r_0=\frac{1}{2}\min\{r_{x_1},\ldots,r_{x_p}\}$ and find $0<\delta<\min\{\delta_0,qr_0\}$ such that
\begin{equation}\label{e:DELTADEF}
B_{\delta}(\mathbf{A})\subseteq\bigcup_{i=1}^{p}B^{X}(x_i,\tfrac{r_{x_i}}{2}),
\end{equation}
cf. \cite[Lemma 2.3]{ZZ12}. Therefore, we obtain
\begin{equation*}
\norm{D_yS(T)}\leq\norm{D_yS(T)-D_{x_i}S(T)}+\norm{D_{x_i}S(T)}\leq\sup_{x\in\mathbf{A}}\norm{D_xS(T)}+1<\infty,\ y\in B_\delta(\mathbf{A}).
\end{equation*}
\textbf{Step 2.} We show that
\begin{equation}\label{e:COMMONDIMENSION}
\sup\limits_{y\in B_{\delta}(\mathbf{A})}\nu_{\lambda}(D_y S(T))<\infty.
\end{equation}
Indeed, let $y\in B_{\delta}(\mathbf{A})$. By \eqref{e:CONTDER} and \eqref{e:DELTADEF} there exists $x_i\in\mathbf{A}$, such that
$$\norm{D_yS(T)-D_{x_i}S(T)}<\frac{\lambda-\|C_{x_i}\|}{2}.$$
Applying Lemma~\ref{lem:APPROXFINITE} with $x_i\in\mathbf{A}$ and $\mu_{x_i}=\frac{\lambda+\|C_{x_i}\|}{2}$, 
there exists a~finite-dimensional subspace $F_{x_i}$ of $X$ such that
\begin{equation*}
\sup_{w\in X, \norm{w}\leq 1}\inf_{z\in F_{x_i}, \norm{z}\leq 1}\norm{D_{x_i} S(T)(w-z)}<\lambda+\norm{C_{x_i}}. 
\end{equation*}
Thus for any $w\in X$, $\norm{w}\leq 1$ we find $z\in F_{x_i}$, $\norm{z}\leq 1$ such that
\begin{equation*}
\begin{split}
\norm{D_y S(T)(w-z)}&\leq\norm{(D_y S(T) -D_{x_i} S(T)) w}+\norm{D_{x_i} S(T) (w-z)}\\
&+\norm{(D_{x_i} S(T)-D_yS(T))z}<2\lambda.
\end{split}
\end{equation*}
This shows that 
$$\sup_{y\in B_{\delta}(\mathbf{A})}\nu_{\lambda}(D_y S(T))\leq\max\{\dim F_{x_1},\ldots,\dim F_{x_p}\}<\infty,$$
which proves \eqref{e:COMMONDIMENSION}.

\textbf{Step 3.} 
Let  $y\in B_{\delta}(\mathbf{A})$ and $z\in B^{X}(y,r_0)$. 
Then there exists $x_i\in\mathbf{A}$ such that $y\in B^{X}(x_i,\frac{r_{x_i}}{2})$. Thus $y+\tau(z-y)\in B^{X}(x_i,r_{x_i})\subseteq B_{\delta}(\mathbf{A})$ for $\tau\in[0,1]$ and by the fact that $S(T)$ is $C^1$ on $B_{\delta_0}(\mathbf{A})$ and by \eqref{e:CONTDER} we have
\begin{equation*}
\begin{split}
&\norm{S(T)z-S(T)y-D_y S(T)(z-y)}=\norm{\int_{0}^{1}(D_{y+\tau(z-y)}S(T)(z-y)-D_y S(T)(z-y))d\tau}\\
&\leq\int_0^1\norm{D_{y+\tau(z-y)}S(T)-D_{x_i}S(T)+D_{x_i}S(T)-D_y S(T)}d\tau\norm{z-y}\leq\frac{\sigma}{4}\norm{z-y}.
\end{split}
\end{equation*}

Since $B_{\delta}(\mathbf{A})$ is a bounded absorbing set, there exists $k_0\in\N$ such that
 $S(kT)B_\delta(\mathbf{A})\subseteq B_\delta(\mathbf{A})$ for $k\geq k_0$. Then the set
$$\mathbf{B}=\bigcup_{k\geq k_0}S(kT)B_\delta(\mathbf{A})\subseteq B_\delta(\mathbf{A})$$
is a bounded absorbing set satisfying $S(T)\mathbf{B}\subseteq\mathbf{B}$.

We know that $n=\sup\limits_{y\in \mathbf{B}}\nu_{\lambda}(D_y S(T))<\infty$, $M=\sup\limits_{y\in \mathbf{B}}\norm{D_{y} S(T)}<\infty$ and
\begin{equation}\label{e:THETAR2}
\norm{S(T)z-S(T)y-D_{y}S(T)(z-y)}\leq\frac{\sigma}{4}\norm{z-y},\ y\in \mathbf{B},\ z\in B^{X}(y,r_0). 
\end{equation}
\textbf{Step 4.} 
Let $0<r\leq r_0$ and $y\in\mathbf{B}$. First consider the case $D_y S(T)\neq 0$. Using the definition of $\nu_{\lambda}(D_y S(T))$ and \eqref{e:SMALLLAM2} there exists a finite-dimensional subspace $F_y$ of $X$ such that
$\dim F_y=n_y=\nu_{\lambda}(D_y S(T))$ and, given $w\in \overline{B}^{X}(0,r)$, there is $z\in\overline{B}^{F_y}(0,r)$ such that
$$\norm{D_y S(T)(w-z)}<2\lambda r.$$
Moreover, by Lemma~\ref{lem:FINITE} there exist points $z_1,\ldots,z_h\in F_y$ with 
\begin{equation}\label{e:ESTH}
h\leq\Bigl(1+\frac{8\sqrt{n_y}\norm{D_y S(T)}}{\sigma}\Bigr)^{\mathbf{n}_y}\leq \Bigl(1+\frac{8\sqrt{n}M}{\sigma}\Bigr)^{\mathbf{n}}
\end{equation}
such that for some $z_i$ we have $\norm{z-z_i}\leq\frac{\sigma r}{4\norm{D_y S(T)}}$. Consequently, we get
$$\norm{D_y S(T)(w-z_i)}<\frac{\sigma r}{4}+2\lambda r,$$
so
\begin{equation}\label{e:FIRSTCOVER}
D_y S(T)(\overline{B}^{X}(0,r))\subseteq\bigcup_{i=1}^{h}B^{X}(D_y S(T)z_i, \frac{\sigma r}{4}+2\lambda r)
\end{equation} 
with $h$ bounded as in \eqref{e:ESTH}.
Note that if $D_y S(T)=0$, then \eqref{e:FIRSTCOVER} holds trivially with $h=1$.

\textbf{Step 5.} 
Let now $0<r\leq r_0$, $y\in\mathbf{B}$ and $z\in B^{X}(y,r)$. Then by \eqref{e:FIRSTCOVER} there exists $z_i\in F_y$ such that
$$\norm{D_y S(T)(z-y)-D_y S(T) z_i}<\frac{\sigma r}{4}+2\lambda r.$$
Hence from \eqref{e:THETAR2} we get
$$\norm{S(T)z-S(T)y-D_y S(T) z_i}<\frac{\sigma r}{2}+2\lambda r,$$
and hence, with $q=\sigma+4\lambda$, it follows that
$$S(T)(B^{X}(y,r))\subseteq\bigcup_{i=1}^{h}B^{X}(S(T)y+D_y S(T) z_i,\frac{qr}{2}).$$
Thus for any nonempty subset $B$ of $\mathbf{B}$ there exist $y_1,\ldots,y_h\in S(T)(B^{X}(y,r)\cap B)$ such that
$$S(T)(B^{X}(y,r)\cap B)\subseteq\bigcup_{i=1}^{h}B^{X}(y_i,qr)$$
and consequently, for any nonempty subset $B$ of $\mathbf{B}$ and any $0<r\leq r_0$ we have
\begin{equation}\label{e:ITERATION}
N^{X}(S(T)(B^{X}(y,r)\cap B),q r)\leq h,\ y\in\mathbf{B},
\end{equation}
with $h$ estimated in \eqref{e:ESTIMATEH}.

Since $\delta<qr_0$, there are $a_1,\ldots,a_{N_0}\in\mathbf{A}\subseteq S(T)\mathbf{B}$ such that $\mathbf{A}\subseteq\bigcup\limits_{i=1}^{N_0}B^{X}(a_i,\frac{qr_0-\delta}{2})$,
which implies that
$$S(T)\mathbf{B}\subseteq \mathbf{B}\subseteq B_{\delta}(\mathbf{A})\subseteq\bigcup_{i=1}^{N_0}B^{X}(a_i,qr_0).$$
Therefore, we get by \eqref{e:ITERATION}
$$S(2T)\mathbf{B}\subseteq\bigcup_{i=1}^{N_0}S(T)(B^{X}(a_i,qr_0)\cap S(T)\mathbf{B})\subseteq\bigcup_{i=1}^{N_0}\bigcup_{j=1}^{h}B^{X}(a_{ij},q^2r_0)$$
with some $a_{ij}\in S(2T)\mathbf{B}$.
Iterating the argument and using repeatedly \eqref{e:ITERATION}, we obtain
$$N^{X}(S(kT)\mathbf{B},q^{k}r_0)\leq N_0h^{k-1},\ k\in\N,$$
which proves \eqref{e:COVERINGC1}.
\end{proof} 

Combining Theorem~\ref{thm:C1SEMIGROUP} with Corollary~\ref{cor:WITHGLOBALATTR}, we obtain the following.

\begin{cor}
Let Assumption~$C^1$ hold with $T>0$, $\delta_0>0$ and $\lambda\in(0,\frac{1}{4})$.
Then for any $\sigma\in(0,1-4\lambda)$ there exists a bounded absorbing set $\mathbf{B}\subset B_{\delta_0}(\mathbf{A})$ satisfying $S(T)\mathbf{B}\subseteq\mathbf{B}$ and a certain countable subset $\mathbf{E_0}$ of $\mathbf{B}$ such that $\mathbf{M_0}=\mathbf{A}\cup\mathbf{E_0}=\cl_X\mathbf{E_0}\subseteq\mathbf{B}$ is a $T$-discrete exponential attractor in $X$
with rate of attraction $\xi\in(0,\frac{1}{T}\ln{\frac{1}{\sigma+4\lambda}})$, and its fractal dimension is bounded by
$$\dimf^{X}(\mathbf{M_0})\leq\mathbf{n}\log_{\frac{1}{\sigma+4\lambda}}\Bigl(1+\frac{8\sqrt{n}M}{\sigma}\Bigr),$$
where $\mathbf{n}$ is given in \eqref{e:ELL} with
$$n=\max_{y\in\mathbf{B}}\nu_{\lambda}(D_y S(T))\text{ and }M=\sup_{y\in\mathbf{B}}\norm{D_y S(T)}.$$
\end{cor}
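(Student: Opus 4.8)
The plan is to simply assemble the two ingredients already at hand: Theorem~\ref{thm:C1SEMIGROUP} supplies a bounded absorbing set together with a verified covering condition, and Corollary~\ref{cor:WITHGLOBALATTR} converts that covering condition into a $T$-discrete exponential attractor, using that a global attractor is known to exist (which is part of Assumption~$C^1$). No step requires more than a direct citation.

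First I would fix $\sigma\in(0,1-4\lambda)$ and invoke Theorem~\ref{thm:C1SEMIGROUP}. This produces $r_0>0$, $b_0>0$ and a bounded absorbing set $\mathbf{B}\subset B_{\delta_0}(\mathbf{A})$ with $S(T)\mathbf{B}\subseteq\mathbf{B}$ for which \eqref{e:COVERINGC1} holds with $q=\sigma+4\lambda\in(0,1)$ and $h$ bounded as in \eqref{e:ESTIMATEH}, where $n=\sup_{y\in\mathbf{B}}\nu_{\lambda}(D_yS(T))<\infty$ and $M=\sup_{y\in\mathbf{B}}\norm{D_yS(T)}<\infty$. I would then observe that \eqref{e:COVERINGC1} is precisely the covering condition \eqref{e:CONDBALLSSEMB} with the choice $a=r_0$, $b=b_0$, $k_0=1$, and the stated $q$ and $h$.

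Next, since Assumption~$C^1$ asserts that the semigroup on $X$ possesses a global attractor $\mathbf{A}$, I would apply Corollary~\ref{cor:WITHGLOBALATTR} with $V=X$ to the nonempty bounded absorbing set $\mathbf{B}$. This yields a countable subset $\mathbf{E_0}\subseteq\mathbf{B}$ such that $\mathbf{M_0}=\mathbf{A}\cup\mathbf{E_0}=\cl_{X}\mathbf{E_0}\subseteq\mathbf{B}$ is a $T$-discrete exponential attractor with rate of attraction $\xi\in(0,\tfrac{1}{T}\ln\tfrac{1}{q})=(0,\tfrac{1}{T}\ln\tfrac{1}{\sigma+4\lambda})$ and $\dimf^{X}(\mathbf{M_0})\leq\log_{\frac{1}{q}}h$ by \eqref{e:ESTDIMM3}. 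Note that neither completeness of $X$ nor asymptotic closedness of the semigroup enters at this point, exactly because Corollary~\ref{cor:WITHGLOBALATTR} already presupposes the existence of the global attractor.

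Finally I would substitute the bound on $h$. Since $\tfrac{1}{q}=\tfrac{1}{\sigma+4\lambda}>1$, the map $x\mapsto\log_{\frac{1}{q}}x$ is increasing, so \eqref{e:ESTIMATEH} gives
$$\dimf^{X}(\mathbf{M_0})\leq\log_{\frac{1}{\sigma+4\lambda}}\Bigl(1+\frac{8\sqrt{n}M}{\sigma}\Bigr)^{\mathbf{n}}=\mathbf{n}\log_{\frac{1}{\sigma+4\lambda}}\Bigl(1+\frac{8\sqrt{n}M}{\sigma}\Bigr),$$
with $\mathbf{n}$ as in \eqref{e:ELL}, which is the claimed estimate (and $n$ may be written as a maximum since $\nu_{\lambda}$ is integer-valued). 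As every step is a direct appeal to a previously established result, there is no real obstacle; the only points demanding a moment's care are matching the normalization of \eqref{e:COVERINGC1} to the hypothesis of Corollary~\ref{cor:WITHGLOBALATTR} (the constants $a$, $b$ and the starting index $k_0$) and invoking monotonicity of the logarithm with the correct base $\tfrac{1}{\sigma+4\lambda}>1$.
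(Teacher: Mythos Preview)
Your proposal is correct and follows exactly the approach indicated in the paper, which simply states that the corollary is obtained by combining Theorem~\ref{thm:C1SEMIGROUP} with Corollary~\ref{cor:WITHGLOBALATTR}. Your write-up merely supplies the routine details of that combination.
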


\begin{rem}
Following \cite[Section 3]{DN01}, if  $S(T)B_{\delta_0}(\mathbf{A})\subseteq B_{\delta_0}(\mathbf{A})$ and \eqref{e:SMALLLAM} is replaced by
\begin{equation*}
D_{y}S(T)=K_{y}+C_{y},\ K_{y}\in\mathcal{L}(X)\text{ is compact},\ C_{y}\in\mathcal{L}(X),\ \norm{C_y}<\lambda_0<1
\end{equation*}
for any $y\in B_{\delta_0}(\mathbf{A})$, then
$$D_y(S(kT))=D_{S(T)^{k-1}y}S(T)\circ\ldots\circ D_yS(T),\ y\in B_{\delta_0}(\mathbf{A}),\ k\in\N.$$
For example, for $k=2$ we have
$$D_y(S(2T))=K_{S(T)y}\circ K_y+C_{S(T)y}\circ K_y+C_{S(T)y}\circ C_y,$$
where $K_{S(T)y}\circ K_y$ and $C_{S(T)y}\circ K_y$ are compact and $\norm{C_{S(T)y}\circ C_y}<\lambda_0^2$. 
Therefore, by induction we have
$$D_y(S(kT))=\tilde{K}_y+\tilde{C}_y,\ y\in B_{\delta_0}(\mathbf{A}),\ k\in\N,$$
where $\tilde{K}_y\in\mathcal{L}(X)$ is compact and $\tilde{C}_y\in\mathcal{L}(X)$ with $\|\tilde{C}_y\|<\lambda_0^k$.
Taking $k_0\in\N$ such that $\lambda=\lambda_0^{k_0}\in(0,\frac{1}{4})$, we can apply Theorem~\ref{thm:C1SEMIGROUP} to $S(k_0T)$ in the role of $S(T)$ and obtain the existence of a $k_0T$-discrete exponential attractor.
\end{rem} 

\section{Existence results for classical exponential attractors}\label{sec:EA}

We now turn to the existence of exponential attractors in the classical sense when a~semigroup is defined in the time interval $[0,\infty)$. To this end, we need an additional property that allows us to extend the $T$-discrete exponential attractor to a compact set of finite fractal dimension that is positively invariant for all $t\in[0,\infty)$. A sufficient condition is the H\"older continuity in time of the semigroup which is a restrictive assumption. 
Following~\cite{sonner2012} we obtain the following result. Note that we obtain a better estimate for the fractal dimension of the exponential attractor than in \cite{chueshov, CzEf}.

\begin{thm}\label{thm:SEM3}
Let $\{S(t)\colon t\geq 0\}$ be an asymptotically closed semigroup on a~complete metric space $(V,d)$, $\mathbf{B}$ be a bounded absorbing set, $T>0$ and assume
that the covering condition \eqref{e:CONDBALLSSEMB}
holds with some $k_0\in\N$, $q\in(0,1)$, $a,b>0$ and $h\geq 1$.
Assume further that there exist $T_2>T_1\geq 0$,
$\zeta>0$ and $\nu>0$ such that
\begin{equation}\label{e:TIMEHOLDER}
d(S(t_1)x,S(t_2)x)\leq\zeta\abs{t_1-t_2}^{\nu},\ t_1,t_2\in[T_1,T_2],\ x\in\mathbf{B}.
\end{equation}
If $T_1>0$ then we also assume that for some $N\in\N$ such that $NT\geq T_1$
\begin{equation}\label{e:LIPTSEMN}
d(S(NT)x,S(NT)y)\leq L_{N}d(x,y),\ x,y\in \mathbf{B},
\end{equation}
holds with some $L_{N}\geq 0$. Then there exists an exponential attractor
$\mathbf{M}$ (independent of $\nu$, $\zeta$, $T_2$) with rate of attraction $\xi\in(0,\frac{1}{T}\ln\frac{1}{q})$
and its fractal dimension is bounded by
\begin{equation*}
\dimf^{V}(\mathbf{M})\leq\tfrac{1}{\nu}+\log_{\frac{1}{q}}h.
\end{equation*}
Moreover,
\begin{equation*}
\mathbf{M}=\mathbf{A}\cup\mathbf{E}=\cl_{V}\mathbf{E}\subseteq\mathbf{B},
\end{equation*}
where $\mathbf{E}=\bigcup_{t\in I}S(t)\mathbf{E_0} \subseteq\mathbf{B}$ for some compact interval $I$, $\mathbf{E_0}$ is the set constructed in Theorem~\ref{thm:EXDEXPAT} and $\mathbf{A}=\Lambda^{V}(\cl_{V}\mathbf{E_0})$ is the global attractor for the semigroup.
\end{thm}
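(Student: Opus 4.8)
The plan is to take the countable set $\mathbf{E_0}$ produced by Theorem~\ref{thm:EXDEXPAT}, flow it along a compact time interval of length $T$ to restore full positive invariance, and then bound the fractal dimension by a two‑scale covering argument that plays the H\"older exponent $\nu$ in time against the covering exponent $\log_{1/q}h$ in space. By Remark~\ref{rem:POSINV} I may assume $\mathbf{B}$ is positively invariant; Theorem~\ref{thm:EXDEXPAT} then supplies $\mathbf{E_0}=\bigcup_{k\geq k_0}Q_k\subseteq\mathbf{B}$ with $S(T)\mathbf{E_0}\subseteq\mathbf{E_0}$, finite sets $Q_k$ with $\#Q_k\leq b\sum_{l=0}^{k-k_0}h^{k-l}$ and $\mathbf{E_0}\subseteq\bigcup_{l=k_0}^{k}Q_l\cup S(kT)\mathbf{B}$, the global attractor $\mathbf{A}=\Lambda^{V}(\cl_{V}\mathbf{E_0})\subseteq\cl_{V}\mathbf{E_0}\subseteq\mathbf{B}$, and exponential attraction of $\mathbf{E_0}$ at any rate $\xi<\tfrac1T\ln\tfrac1q$. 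I fix a compact interval $I=[\tau_0,\tau_0+T]$ with $\tau_0=0$ if $T_1=0$ and $\tau_0=NT$ if $T_1>0$; if necessary I first replace $T$ by $T/m$ with $m$ large enough that $I\subseteq[T_1,T_2]$, which (because $S(kT/m)\mathbf{B}\subseteq S(\lfloor k/m\rfloor T)\mathbf{B}$ by positive invariance) preserves the covering condition \eqref{e:CONDBALLSSEMB} with unchanged values of $\log_{1/q}h$ and $\tfrac1T\ln\tfrac1q$, while \eqref{e:LIPTSEMN} persists with $N$ replaced by $mN$. Finally I set $\mathbf{E}:=\bigcup_{t\in I}S(t)\mathbf{E_0}$ and $\mathbf{M}:=\cl_{V}\mathbf{E}$. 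Positive invariance of $\mathbf{E}$ is then immediate: for $s\geq0$ and $t\in I$ write $s+t=\tau_0+lT+t'$ with $l\in\N_0$ and $t'\in[0,T)$, so $S(s)S(t)\mathbf{E_0}=S(\tau_0+t')S(lT)\mathbf{E_0}\subseteq S(\tau_0+t')\mathbf{E_0}\subseteq\mathbf{E}$; and $\mathbf{E}\subseteq\bigcup_{t\geq0}S(t)\mathbf{B}\subseteq\mathbf{B}$.

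For the dimension I would first record a covering bound uniform in the time shift: for every $t\geq0$ and $k\geq k_0$, since $S(t)S(kT)\mathbf{B}=S(kT)S(t)\mathbf{B}\subseteq S(kT)\mathbf{B}$, the inclusion $S(t)\mathbf{E_0}\subseteq\bigcup_{l=k_0}^{k}S(t)Q_l\cup S(kT)\mathbf{B}$ together with \eqref{e:CONDBALLSSEMB} gives, by the same computation as in \eqref{eq:proofE0}, $N^{V}(S(t)\mathbf{E_0},2aq^{k})\leq P(k)h^{k}$ for a fixed polynomial $P$. Next, given small $\eps>0$, cover $I$ by $n_\eps\leq 1+T\eps^{-1/\nu}$ subintervals $I_1,\dots,I_{n_\eps}$ of length at most $\eps^{1/\nu}$ with midpoints $t_j\in I_j\subseteq I\subseteq[T_1,T_2]$; by \eqref{e:TIMEHOLDER}, for $t\in I_j$ and $w\in\mathbf{E_0}\subseteq\mathbf{B}$ we have $d(S(t)w,S(t_j)w)\leq\zeta|t-t_j|^{\nu}<\zeta\eps$, so $\bigcup_{t\in I_j}S(t)\mathbf{E_0}$ lies in the $\zeta\eps$‑neighbourhood of $S(t_j)\mathbf{E_0}$ and hence $N^{V}\bigl(\bigcup_{t\in I_j}S(t)\mathbf{E_0},2\zeta\eps\bigr)\leq N^{V}(S(t_j)\mathbf{E_0},\zeta\eps)$. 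Choosing $k=k(\eps)\geq k_0$ with $2aq^{k}\leq\zeta\eps<2aq^{k-1}$, so that $h^{k}\leq C\eps^{-\log_{1/q}h}$ and $k=O(\log\tfrac1\eps)$, summing over $j$ yields $N^{V}(\mathbf{E},2\zeta\eps)\leq n_\eps\,P(k)\,h^{k}\leq\bigl(1+T\eps^{-1/\nu}\bigr)P\bigl(O(\log\tfrac1\eps)\bigr)C\eps^{-\log_{1/q}h}$. Passing to $\log_{1/\eps}$ and $\limsup_{\eps\to0^{+}}$, the logarithmic and constant factors drop out and $\dimf^{V}(\mathbf{E})\leq\tfrac1\nu+\log_{1/q}h$; being totally bounded, $\mathbf{E}$ has compact closure $\mathbf{M}$ (as $V$ is complete), with the same dimension bound.

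It remains to collect the other properties. For exponential attraction: if $\tau_0=0$ then $\mathbf{M}\supseteq\cl_{V}\mathbf{E_0}=\mathbf{M_0}$, which attracts all bounded sets at every rate below $\tfrac1T\ln\tfrac1q$; if $\tau_0=NT$ then $\mathbf{M}\supseteq S(NT)\mathbf{E_0}$, and for bounded $G$ with $S(t_G)G\subseteq\mathbf{B}$ and $t\geq t_G+NT$, \eqref{e:LIPTSEMN} gives $\dist^{V}(S(t)G,S(NT)\mathbf{E_0})\leq L_N\,\dist^{V}(S(t-NT)G,\mathbf{E_0})$, whence the same exponential rate $\xi\in(0,\tfrac1T\ln\tfrac1q)$. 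For the structure of $\mathbf{M}$: since $\mathbf{A}$ is the minimal compact set attracting all bounded sets (as noted after Theorem~\ref{thm:existglobal}) and $\mathbf{M}$ is such a set, $\mathbf{A}\subseteq\mathbf{M}$, so $\mathbf{A}\cup\mathbf{E}\subseteq\mathbf{M}$; conversely, for $x=\lim_l S(t_l)y_l\in\cl_{V}\mathbf{E}$ with $t_l\in I$ and $y_l\in Q_{k_l}$, we may assume $t_l\to t^{*}\in I$, and either $\{k_l\}$ is bounded — then the $y_l$ lie in a finite set, so along a subsequence $y_l$ equals some fixed $y^{*}\in\mathbf{E_0}$ and $x=S(t^{*})y^{*}\in\mathbf{E}$ by continuity of $t\mapsto S(t)y^{*}$ on $I$ from \eqref{e:TIMEHOLDER} — or $k_l\to\infty$ — then $S(t_l)y_l\in S(t_l+k_lT)\mathbf{B}$ with $t_l+k_lT\to\infty$, so $\dist^{V}(S(t_l)y_l,\mathbf{A})\to0$ and $x\in\mathbf{A}$. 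Hence $\mathbf{M}=\mathbf{A}\cup\mathbf{E}=\cl_{V}\mathbf{E}\subseteq\mathbf{B}$ with $\mathbf{A}=\Lambda^{V}(\cl_{V}\mathbf{E_0})$, and $S(s)\mathbf{M}=\mathbf{A}\cup S(s)\mathbf{E}\subseteq\mathbf{M}$ for all $s\geq0$.

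I expect the main obstacle to be the dimension estimate. Obtaining the \emph{additive} bound $\tfrac1\nu+\log_{1/q}h$ — rather than the product $\tfrac1\nu\bigl(1+\log_{1/q}h\bigr)$ that treating $S(\cdot)\mathbf{E_0}$ naively as a H\"older image of $I\times\mathbf{E_0}$ would give — forces the careful matching of time intervals of length $\eps^{1/\nu}$ against space balls of radius $\sim\eps$, and this relies on the uniform‑in‑$t$ bound $N^{V}(S(t)\mathbf{E_0},2aq^{k})\leq P(k)h^{k}$; that uniform bound is precisely where positive invariance of $\mathbf{B}$ is indispensable, since it is what returns $S(t)S(kT)\mathbf{B}$ to a subset of $S(kT)\mathbf{B}$.
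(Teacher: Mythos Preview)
Your overall strategy coincides with the paper's: set $\mathbf{E}=\bigcup_{t\in I}S(t)\mathbf{E_0}$ with $I$ an interval of length $T$ starting at $\tau_0=NT$, verify positive invariance, pull exponential attraction through $S(NT)$ via \eqref{e:LIPTSEMN}, and obtain the additive bound $\tfrac1\nu+\log_{1/q}h$ by pairing time subintervals of length $\sim\eps^{1/\nu}$ against spatial balls of radius $\sim\eps$. The dimension count and the identification $\mathbf{M}=\mathbf{A}\cup\mathbf{E}=\cl_V\mathbf{E}$ are carried out correctly.

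There is, however, a genuine gap in the reduction ``replace $T$ by $T/m$ so that $I\subseteq[T_1,T_2]$''. Because you keep $\tau_0$ equal to the original $NT$ (replacing $N$ by $mN$ so that \eqref{e:LIPTSEMN} persists), the new interval is $[NT,\,NT+T/m]$, and this lies in $[T_1,T_2]$ only if $NT<T_2$. The hypotheses demand merely $NT\geq T_1$ and do not exclude $NT\geq T_2$ (e.g.\ $T_1=1$, $T_2=2$, $T=10$, $N=1$); in that case no choice of $m$ achieves the inclusion, and your appeal to \eqref{e:TIMEHOLDER} on $I$ is unjustified both in the dimension argument and in the step ``$x=S(t^{*})y^{*}$ by continuity of $t\mapsto S(t)y^{*}$ on $I$''. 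A second, independent objection is that even when the reduction succeeds, the modified $T$---and with it $\mathbf{E_0}$, $\mathbf{E}$ and $\mathbf{M}$---would depend on $T_2$, contradicting the asserted independence of $\mathbf{M}$ from $\nu,\zeta,T_2$.

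The paper circumvents both problems without touching $T$: writing $\tau=T_2-T_1$ and using positive invariance of $\mathbf{B}$, one has $d(S(t_1)x,S(t_2)x)\leq\zeta|t_1-t_2|^{\nu}$ for $x\in\mathbf{B}$ and $t_1,t_2\in[T_1+l\tau,\,T_1+(l+1)\tau]$, any $l\in\N_0$, simply because $S(l\tau)x\in\mathbf{B}$. One then works directly with $I=[NT,(N+1)T]$, covers it first by at most $\lfloor T/\tau\rfloor+2$ shifted $\tau$-intervals, subdivides each into pieces of length $\sim\eps^{1/\nu}$, and runs your two-scale count unchanged. With this single amendment your proof is complete and agrees with the paper's.
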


\begin{proof}
\textbf{Step 1.} Without loss of generality we can assume that $\mathbf{B}$ is positively invariant by Remark~\ref{rem:POSINV}. We first show that there exists a~set $\mathbf{E}$ (independent of $\nu$, $\zeta$, $T_2$) with the following properties:
\begin{itemize}
\item[(i)] $\mathbf{E}\subseteq\mathbf{B}$, $\mathbf{E}$ is precompact in $V$,
\item[(ii)] $S(t)\mathbf{E}\subseteq\mathbf{E},\ t\geq 0,$
\item[(iii)] the fractal dimension of $\mathbf{E}$ is bounded by
\begin{equation*}
\dimf^{V}(\mathbf{E})\leq\tfrac{1}{\nu}+\log_{\frac{1}{q}}h,
\end{equation*}
\item[(iv)] for any $\xi\in(0,\frac{1}{T}\ln\frac{1}{q})$,
and any bounded subset $G$ of $V$ we have
\begin{equation}\label{e:EXPATESEM2}
\lim_{t\rightarrow\infty}e^{\xi t}\dist^{V}(S(t)G,\mathbf{E})=0.
\end{equation}
\end{itemize}
Indeed, if $T_1=0$ then we set $N=0$, otherwise let $N\in\N$ be such that $NT\geq T_1$
and \eqref{e:LIPTSEMN} holds. We define
$$\mathbf{E}=\bigcup_{p\in[NT,(N+1)T]}S(p)\mathbf{E_0},$$
where $\mathbf{E_0}$, constructed in Step 2 in the proof of Theorem~\ref{thm:EXDEXPAT}, satisfies ($e_1$), ($e_2$) from there. Since $\mathbf{E_0}\subseteq \mathbf{B}$
and $\mathbf{B}$ is positively invariant, we get $\mathbf{E}\subseteq \mathbf{B}$. For $t\geq 0$ we observe that
\begin{align*}
S(t)\mathbf{E}&=\bigcup_{p\in[NT,(N+1)T]}S(p+t)\mathbf{E_0}\subseteq\bigcup_{s\in[0,T)}\bigcup_{l\in\N_0}S((N+l)T+s)\mathbf{E_0}\\
&\subseteq\bigcup_{s\in[0,T)}S(NT+s)\mathbf{E_0}\subseteq\mathbf{E},
\end{align*}
which shows (ii).
Let $\xi\in(0,\frac{1}{T}\ln\frac{1}{q})$ and $G\subseteq V$ be bounded.
If $T_1=0$ then $\mathbf{E_0}\subseteq\mathbf{E}$ and \eqref{e:EXPATESEM2} follows directly from
($e_2$). Otherwise, \eqref{e:LIPTSEMN} is assumed and we
know that $S(t)G\subseteq\mathbf{B}$ for $t\geq t_{G}$ and for any $\eps>0$ there exists $t_\eps\geq 0$
such that
$$e^{\xi t}\dist^{V}(S(t)G,\mathbf{E_0})<\frac{\eps}{e^{\xi NT}(L_{N}+1)},\ t\geq t_\eps.$$
Fix $\eps>0$ and let $t\geq NT+t_\eps+t_{G}$. Then we have by \eqref{e:LIPTSEMN}
\begin{align*}
e^{\xi t}\dist^{V}(S(t)G,\mathbf{E})&\leq e^{\xi t}\dist^{V}(S(NT)S(t-NT)G,S(NT)\mathbf{E_0})\\
&\leq e^{\xi NT}L_{N}e^{\xi(t-NT)}\dist^{V}(S(t-NT)G,\mathbf{E_0})<\eps.
\end{align*}
We are left to prove (iii). Now let $\tau=T_2-T_1>0$. First we show that
\begin{equation}\label{e:HOLDERONSMALL}
d(S(t_1)x,S(t_2)x)\leq\zeta\abs{t_1-t_2}^{\nu},\ x\in \mathbf{B},\ t_1,t_2\in[T_1+l\tau,T_1+(l+1)\tau],\ l\in\N_0.
\end{equation}
Indeed, let $l\in\N_0$, $t_1,t_2\in[T_1+l\tau,T_1+(l+1)\tau]$ and $x\in\mathbf{B}$. Then $t_i=T_1+l\tau+s_i$, $s_i\in[0,\tau]$, $i=1,2$, and
$$d(S(t_1)x,S(t_2)x)=d(S(T_1+s_1)S(l\tau)x,S(T_1+s_2)S(l\tau)x)\leq\zeta\abs{t_1-t_2}^{\nu}.$$
By ($e_1$) from Step 2 in the proof of Theorem~\ref{thm:EXDEXPAT} we also have
$$\mathbf{E}=\bigcup_{k\geq k_0}\bigcup_{p\in[NT,(N+1)T]}S(p)Q_{k},$$
where $Q_k\subseteq S(kT)\mathbf{B}$ and $\displaystyle\# Q_k\leq b\sum_{j=0}^{k-k_0}h^{k-j}$.

Let $m_0\geq k_0$ be such that
\begin{equation}\label{e:M0}
aq^{m}\leq 2\zeta\tau^{\nu}\text{ for }m\geq m_0.
\end{equation}
Let $m\geq m_0$ and $p\in[NT,(N+1)T]$. For $k\geq m$ we have
$$S(p)Q_k\subseteq S(p)S(kT)\mathbf{B}=S(mT)S((k-m)T+p)\mathbf{B}\subseteq S(mT)\mathbf{B},$$
and thus
$$\mathbf{E}\subseteq\bigcup_{k=k_0}^{m}\bigcup_{p\in[NT,(N+1)T]}S(p)Q_k\cup S(mT)\mathbf{B},\ m\geq m_0.$$
We now construct a finite covering of this set
by balls with centers in $\mathbf{E}$ and radii $2aq^{m}$. 
To this end, we denote the elements of $\displaystyle\bigcup_{k=k_0}^{m}Q_k$ by $\{x_i\colon i=1,\ldots,i_m\}$ and observe that 
$$i_{m}=\# \bigcup_{k=k_0}^{m}Q_k\leq\sum_{k=k_0}^{m}\# Q_k\leq b\sum_{k=k_0}^{m}\sum_{j=0}^{k-k_0}h^{k-j}\leq b(m-k_0+1)^{2}h^{m}.$$
The interval $[NT,(N+1)T]$ can be covered by $\left[\frac{T}{\tau}\right]+2$ intervals of length $\tau$
of the form $[T_1+l\tau,T_1+(l+1)\tau]$ with $l\in\N_0$. 
We further subdivide each such interval into intervals of length
$\left(\frac{aq^{m}}{2\zeta}\right)^{\frac{1}{\nu}}\leq \tau$ 
by \eqref{e:M0}, plus possibly one interval of smaller length. We note that 
$\Big[\tau\Big(\frac{2\zeta}{aq^{m}}\Big)^{\frac{1}{\nu}}\Big]+1$ such intervals cover 
$[T_1+l\tau,T_1+(l+1)\tau]$, since
$\Big(\frac{aq^{m}}{2\zeta}\Big)^{\frac{1}{\nu}}\Big(\Big[\tau\Big(\frac{2\zeta}{aq^{m}}\Big)^{\frac{1}{\nu}}\Big]+1\Big)>\tau$.
Let $I_j$, $j=1,\ldots, j_m$, be an arbitrary one of these intervals, where
$$j_m\leq\left(\left[\tfrac{T}{\tau}\right]+2\right)\Big(\Big[\tau\Big(\tfrac{2\zeta}{aq^{m}}\Big)^{\frac{1}{\nu}}\Big]+1\Big).$$
We choose $p_j\in I_j$. Then, for any $p\in I_j$ we get by \eqref{e:HOLDERONSMALL}
$$d(S(p)x_i,S(p_j)x_i)\leq\zeta\abs{p-p_j}^{\nu}\leq\frac{1}{2}aq^{m}<aq^{m},$$
and thus, 
$$\bigcup_{i=1}^{i_m}\bigcup_{p\in[NT,(N+1)T]}S(p)x_i\subseteq\bigcup_{i=1}^{i_m}\bigcup_{j=1}^{j_m}B^{V}(S(p_j)x_i,aq^{m}).$$
This implies that
$$\bigcup_{k=k_0}^{m}\bigcup_{p\in[NT,(N+1)T]}S(p)Q_k\cap\mathbf{E}\subseteq\bigcup_{i=1}^{i_m}\bigcup_{j=1}^{j_m}B^{V}(u_{i,j},2aq^{m})$$
for some $u_{i,j}\in\mathbf{E}$. Using \eqref{e:CONDBALLSSEMB} and \eqref{e:M0} we obtain for $m\geq m_0$
\begin{equation*}
\begin{split}
N^{V}(\mathbf{E},2aq^{m})\leq i_m j_m+bh^{m}&\leq b(m-k_0+1)^{2}h^{m}\left(\left[\tfrac{T}{\tau}\right]+2\right)\left(\tau\left(\tfrac{2\zeta}{aq^{m}}\right)^{\frac{1}{\nu}}+1\right)+bh^{m}\\
&\leq c(m-k_0+1)^{2}h^{m}\left(\tfrac{2\zeta}{aq^{m}}\right)^{\frac{1}{\nu}},
\end{split}
\end{equation*}
where $c$ is a positive constant independent of $m$.
Since $2aq^m$ converges to $0$ as $m\to\infty$, it follows that $\mathbf{E}$
is precompact in $V$. Consider an arbitrary sequence $\eps_n>0$ converging to $0$ and choose
$m_n\in\N$ such that $m_n\geq m_0$ and
$$2aq^{m_n}\leq \eps_n<2aq^{m_n-1}<1\text{ for }n\geq n_0.$$
Then, we have
$$\log_{\frac{1}{\eps_n}}N^{V}(\mathbf{E},\eps_n)\leq\frac{\ln{c}+2\ln(m_n-k_0+1)+m_n\ln{h}+\frac{1}{\nu}\left(\ln(\frac{2\zeta}{a})-m_n\ln{q}\right)}{-\ln(2a)-(m_n-1)\ln{q}},$$
which implies that
$$\dimf^{V}(\mathbf{E})\leq\frac{\ln{h}-\tfrac{1}{\nu}\ln{q}}{-\ln{q}}=\tfrac{1}{\nu}+\log_{\frac{1}{q}}h$$
and completes the proof of the claims (i)--(iv).

\textbf{Step 2.}
By Theorem~\ref{thm:EXDEXPAT} the global attractor for the semigroup exists, $\mathbf{A}=\Lambda^{V}(\cl_{V}\mathbf{E_0})$  and
\begin{equation*}
\dimf^{V}(\mathbf{A})\leq\log_{\frac{1}{q}}h.
\end{equation*}
We define
$$\mathbf{M}=\mathbf{A}\cup\mathbf{E}\subseteq\mathbf{B}$$
and show that it is an exponential attractor. 

The set $\mathbf{M}$ is nonempty, positively invariant, precompact and
$$\dimf^{V}(\mathbf{M})=\max\{\dimf^{V}(\mathbf{A}),\dimf^{V}(\mathbf{E})\}\leq\tfrac{1}{\nu}+\log_{\frac{1}{q}}h.$$
Moreover,  for any $\xi\in(0,\frac{1}{T}\ln{\frac{1}{q}})$, and
any bounded subset $G$ of $V$ we have
$$\lim_{t\to\infty}e^{\xi t}\dist^{V}(S(t)G,\mathbf{M})=0.$$
Hence, it remains to show that $\mathbf{M}$ is compact.

Consider a sequence $x_n\in\mathbf{M}$, $n\in\N$. If infinitely many of its elements belong
to $\mathbf{A}$, then by the compactness of $\mathbf{A}$, there exists a subsequence convergent
to an element of $\mathbf{A}\subseteq\mathbf{M}$. Otherwise,  there is a subsequence
$$x_{n_j}\in\mathbf{E}=\bigcup_{k\geq k_0}\bigcup_{p\in[NT,(N+1)T]}S(p)Q_k.$$
Thus $x_{n_j}=S(p_j)y_j$, where $p_{j}\in[NT,(N+1)T]$, $y_j\in Q_{k_j}$ for some $k_j\geq k_0$.
Taking a~subsequence if necessary, we can assume that $p_j\to p_0\in[NT,(N+1)T]\subset[T_1,\infty)$
by the compactness of the interval.

If $p_0\in(T_1+l\tau,T_1+(l+1)\tau)$ for some $l\in\N_0$, then $p_j\in(T_1+l\tau,T_1+(l+1)\tau)$
for large $j$. Moreover, by \eqref{e:HOLDERONSMALL} we have
$$d(S(p_j)x,S(p_0)x)\leq\zeta\abs{p_j-p_0}^{\nu},\ x\in\mathbf{B}.$$
If $p_0=T_1+l\tau$ for some $l\in\N$, then $p_j\in(T_1+l\tau-\frac{\tau}{2},T_1+l\tau+\frac{\tau}{2})$
for large $j$. Moreover, for $x\in\mathbf{B}$ and $t_1,t_2\in(T_1+l\tau-\frac{\tau}{2},T_1+l\tau+\frac{\tau}{2})$
we have $t_i=T_1+l\tau-\frac{\tau}{2}+s_i$, $s_i\in(0,\tau)$, $i=1,2$ and by \eqref{e:TIMEHOLDER}
and the positive invariance of $\mathbf{B}$ we get
\begin{equation*}
\begin{split}
d(S(t_1)x,S(t_2)x)&=d(S(T_1+s_1)S(l\tau-\tfrac{\tau}{2})x,S(T_1+s_2)S(lT-\tfrac{\tau}{2})x)\\
&\leq\zeta\abs{s_1-s_2}^{\nu}=\zeta\abs{t_1-t_2}^{\nu}.
\end{split}
\end{equation*}
If $p_0=T_1=NT$, then we directly apply \eqref{e:TIMEHOLDER} and conclude from
the above considerations that
\begin{equation*}
S(p_j)x\to S(p_0)x\text{ as }j\to\infty\text{ for any }x\in\mathbf{B}.
\end{equation*}
We distinguish two cases.
If $K=\sup\{k_j\colon j\in\N\}<\infty$, then the elements $y_j$ belong to the finite set $\bigcup_{k=k_0}^{K}Q_k$
and we find a constant subsequence $y_{j_l}=y_0\in\bigcup_{k=k_0}^{K}Q_k\subseteq\mathbf{B}$.
It follows that 
$$x_{n_{j_l}}=S(p_{j_l})y_{j_l}=S(p_{j_l})y_0\to S(p_0)y_0\in\bigcup_{k=k_0}^{K}\bigcup_{p\in[NT,(N+1)T]}S(p)Q_k\subseteq\mathbf{E}\subseteq\mathbf{M}.$$
If $\sup\{k_j\colon j\in\N\}=\infty$, then there exists a~subsequence $k_{j_l}\to\infty$ such that
$$x_{n_{j_l}}=S(p_{j_l})y_{j_l}\in S(p_{j_l})S(k_{j_l}T)\mathbf{B}\subseteq S(k_{j_l}T)\mathbf{B}.$$
Since $\dist^{V}(S(k_{j_l}T)\mathbf{B},\mathbf{A})\to 0$ as $l\to\infty$, the sequence $x_{n_{j_l}}$
has a convergent subsequence to an element of $\mathbf{A}\subseteq\mathbf{M}$.

We now show that $\mathbf{A}\cup\mathbf{E}=\cl_{V}\mathbf{E}$.
Since $\cl_{V}\mathbf{E}$ is a compact set attracting all bounded sets, by
the minimality of the global attractor we have $\mathbf{A}\subseteq\cl_{V}\mathbf{E}$ and thus $\mathbf{A}\cup\mathbf{E}\subseteq\cl_{V}\mathbf{E}$.
For the converse inclusion, let $x\in\cl_{V}\mathbf{E}$. Then there exists a sequence
$x_j\in\mathbf{E}$ such that $x_j\to x$. Therefore, there are $k_j\geq k_0$
and $p_j\in[NT,(N+1)T]$ such that $x_j=S(p_j)y_j$ with some $y_j\in Q_{k_j}$.
Taking a subsequence if necessary, we assume that $p_j\to p_0\in[NT,(N+1)T]$.
Following the above arguments, if $K=\sup\{k_j\colon j\in\N\}<\infty$, then
$x_j$ has a convergent subsequence to an element of $\mathbf{E}$ and hence,  $x\in\mathbf{E}$.
If $\sup\{k_j\colon j\in\N\}=\infty$, then $x_j$ has a~convergent subsequence to an element
of $\mathbf{A}$, so $x\in\mathbf{A}$. We conclude that $x\in\mathbf{A}\cup\mathbf{E}$,
which completes the proof.
\end{proof}

We now use Theorem \ref{thm:SEM3} to formulate an existence result for exponential attractors based on the quasi-stability of the semigroup, see also \cite[Theorem 3.4]{sonner2012} and \cite[Theorem 3.4.7]{chueshov}. Corresponding results can be formulated for the other classes of semigroups considered in Sections \ref{sec:QS}--\ref{sec:Lad}.

\begin{thm}
Let $\{S(t)\colon t\geq 0\}$ be an asymptotically closed semigroup on a~complete metric space $(V,d)$, $T>0$ and let $\mathbf{B}\subseteq V$ be a bounded absorbing set. If the semigroup is quasi-stable on $\mathbf{B}$ at time $T$ with respect to a~compact seminorm $\mathfrak{n}_{Z}$ and parameters $(\eta,\kappa)$ and there exist $T_2>T_1\geq 0$, $\zeta>0$ and $\nu>0$ such that
\begin{equation*}
d(S(t_1)x,S(t_2)x)\leq\zeta\abs{t_1-t_2}^{\nu},\ t_1,t_2\in[T_1,T_2],\ x\in\mathbf{B},
\end{equation*}
then for any $\sigma\in(0,1-\eta)$ there exists an exponential attractor $\mathbf{M}\subseteq\mathbf{B}$ in $V$ (independent of $\nu$, $\zeta$, $T_2$) for the semigroup with rate of attraction $\xi\in(0,\frac{1}{T}\ln{\frac{1}{\eta+\sigma}})$, and its fractal dimension is bounded by
\begin{equation*}
\dimf^{V}(\mathbf{M})\leq\tfrac{1}{\nu}+\log_{\frac{1}{\eta+\sigma}}\mathfrak{m}_{Z}\left(\tfrac{\sigma}{2\kappa}\right).
\end{equation*}
We also have
$\mathbf{M}=\mathbf{A}\cup\mathbf{E}=\cl_{V}\mathbf{E}\subseteq\mathbf{B},$
where the set $\mathbf{E}$ is specified in Theorem \ref{thm:SEM3}. 
\end{thm}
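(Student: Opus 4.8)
The plan is to obtain this statement as a direct consequence of Theorem~\ref{thm:QUASI} and Theorem~\ref{thm:SEM3}: quasi-stability enters only through the covering condition it produces, while the H\"older estimate is precisely the extra ingredient that Theorem~\ref{thm:SEM3} requires in order to pass from a $T$-discrete exponential attractor to a classical one. First I would invoke Remark~\ref{rem:POSINV} to assume, without loss of generality, that $\mathbf{B}$ is positively invariant; replacing $\mathbf{B}$ by a smaller positively invariant bounded absorbing set contained in it leaves the quasi-stability inequalities \eqref{e:DIMSEM1}--\eqref{e:DIMSEM2} and the H\"older bound \eqref{e:TIMEHOLDER} valid, since all of these are inherited by subsets, and it guarantees the desired inclusion $\mathbf{M}\subseteq\mathbf{B}$ at the end.

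With $S(T)\mathbf{B}\subseteq\mathbf{B}$ now available, Theorem~\ref{thm:QUASI} applies and gives, for the fixed $\sigma\in(0,1-\eta)$, the covering condition \eqref{e:CONDBALLSSEMB} with some $k_0\in\N$, $a,b>0$, $q=\eta+\sigma\in(0,1)$ and $h=\mathfrak{m}_{Z}\bigl(\tfrac{\sigma}{2\kappa}\bigr)\geq 1$. Next I would verify the remaining hypothesis of Theorem~\ref{thm:SEM3}, namely \eqref{e:LIPTSEMN} in the case $T_1>0$: choosing $N\in\N$ with $NT\geq T_1$ and combining \eqref{e:DIMSEM1} and \eqref{e:DIMSEM2} yields the one-step bound $d(S(T)x,S(T)y)\leq(\eta+\kappa)\,d(x,y)$ for $x,y\in\mathbf{B}$, which by positive invariance of $\mathbf{B}$ iterates along orbits to $d(S(NT)x,S(NT)y)\leq(\eta+\kappa)^{N}d(x,y)$; hence \eqref{e:LIPTSEMN} holds with $L_N=(\eta+\kappa)^N$.

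Finally I would apply Theorem~\ref{thm:SEM3} with the parameters $q=\eta+\sigma$ and $h=\mathfrak{m}_{Z}(\tfrac{\sigma}{2\kappa})$ just obtained. It produces an exponential attractor $\mathbf{M}=\mathbf{A}\cup\mathbf{E}=\cl_{V}\mathbf{E}\subseteq\mathbf{B}$, where $\mathbf{A}=\Lambda^{V}(\cl_{V}\mathbf{E_0})$ is the global attractor, $\mathbf{E}=\bigcup_{t\in I}S(t)\mathbf{E_0}$ for a compact interval $I$ and $\mathbf{E_0}$ the set constructed in Theorem~\ref{thm:EXDEXPAT}, with rate of attraction $\xi\in(0,\tfrac{1}{T}\ln\tfrac{1}{\eta+\sigma})$ and $\dimf^{V}(\mathbf{M})\leq\tfrac{1}{\nu}+\log_{\frac{1}{\eta+\sigma}}\mathfrak{m}_{Z}(\tfrac{\sigma}{2\kappa})$. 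Since $\mathbf{E_0}$ and $\mathbf{A}$ are built only from the covering condition, hence from the quasi-stability data, and $I$ depends only on $N$ and $T$, the set $\mathbf{M}$ is independent of $\nu$, $\zeta$ and $T_2$. I expect no real obstacle here: the only step needing a moment's care is the verification of \eqref{e:LIPTSEMN} for $T_1>0$, where one must use positive invariance to iterate the Lipschitz estimate coming from quasi-stability, and the bookkeeping that shrinking $\mathbf{B}$ to a positively invariant sub-absorbing set preserves all hypotheses and the inclusion $\mathbf{M}\subseteq\mathbf{B}$; otherwise the argument is a routine concatenation of the two earlier theorems.
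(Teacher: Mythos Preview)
Your proposal follows exactly the paper's approach: reduce to positively invariant $\mathbf{B}$ via Remark~\ref{rem:POSINV}, invoke Theorem~\ref{thm:QUASI} for the covering condition, verify the Lipschitz hypothesis~\eqref{e:LIPTSEMN}, and apply Theorem~\ref{thm:SEM3}.

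There is one small gap in your verification of~\eqref{e:LIPTSEMN}. You write that combining \eqref{e:DIMSEM1} and \eqref{e:DIMSEM2} gives $d(S(T)x,S(T)y)\leq(\eta+\kappa)\,d(x,y)$, but \eqref{e:DIMSEM2} involves the seminorm $\mathfrak{n}_Z(Kx-Ky)$, not $\|Kx-Ky\|_Z$, and nothing in Definition~\ref{defn:QUASISTABILITY} says $\mathfrak{n}_Z\leq\|\cdot\|_Z$. The fix is easy: the compactness of $\mathfrak{n}_Z$ does imply $\mathfrak{n}_Z(z)\leq C\|z\|_Z$ for some constant $C>0$ (otherwise one finds $z_l\in\overline{B}^Z(0,1)$ with $\mathfrak{n}_Z(z_l)\geq l$, and any $\mathfrak{n}_Z$-Cauchy subsequence would have $\mathfrak{n}_Z(z_{l_j})$ bounded, a contradiction), so your argument goes through with $L_N=(\eta+C\kappa)^N$. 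The paper establishes the same Lipschitz bound by an equivalent contradiction argument applied directly to~\eqref{e:DIMSEM2}, again using compactness of $\mathfrak{n}_Z$ to bound $\mathfrak{n}_Z(z_l)$ along a subsequence. Either route works; the exact value of $L_N$ is irrelevant to the conclusion.
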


\begin{proof}
We assume that $\mathbf{B}$ is positively invariant by Remark~\ref{rem:POSINV}.
The result is an immediate consequence of Theorems~\ref{thm:QUASI} and~\ref{thm:SEM3}, since the quasi-stability of the semigroup on a positively invariant set $\mathbf{B}$ at time $T$ with respect to a~compact seminorm $\mathfrak{n}_{Z}$ and parameters $(\eta,\kappa)$
implies that
\begin{equation}\label{e:LIPTSEMK}
d(S(kT)x,S(kT)y)\leq L^{k} d(x,y),\ x,y\in \mathbf{B},\ k\in\N,
\end{equation}
with some constant $L\geq 0$. In particular, the condition \eqref{e:LIPTSEMN} is satisfied.

Indeed, we show \eqref{e:LIPTSEMK} for $k=1$ by contradiction. For this purpose, suppose that for any $l\in\N$ there are $x_l, y_l\in \mathbf{B}$ such that
$$l d(x_l,y_l)<d(S(T)x_l,S(T)y_l).$$
Thus $x_l\neq y_l$ and by \eqref{e:DIMSEM2}
\begin{equation}\label{e:UNBSP}
l<\eta+\mathfrak{n}_{Z}(z_l),\ l\in\N,
\end{equation}
with $z_l=\frac{Kx_l-Ky_l}{d(x_l,y_l)}$.
Since by \eqref{e:DIMSEM1} we have $\norm{z_l}_{Z}\leq\kappa$, $l\in\N$, it follows from
the compactness of $\mathfrak{n}_{Z}$ that there exists a Cauchy subsequence $z_{l_j}$
with respect to $\mathfrak{n}_{Z}$. In particular, the sequence $\mathfrak{n}_{Z}(z_{l_j})$
is bounded which contradicts \eqref{e:UNBSP}.
Finally, \eqref{e:LIPTSEMK} follows by induction using the positive invariance of $\mathbf{B}$.
\end{proof}

To conclude, we mention the concept of non-autonomous exponential attractors for semigroups introduced in \cite{CaSo, sonner2012} and compare it with the notion of $T$-discrete exponential attractors. As in the latter case, the idea is to weaken the invariance property of exponential attractors replacing it by the positive invariance in the non-autonomous sense. 
\begin{defn}
A \emph{non-autonomous exponential attractor} for a semigroup $\{S(t)\colon t\geq 0\}$ on a metric space $(V,d)$ is a~non-autonomous  set $\mathbf{M}=\{\mathbf{M}(t)\colon t\geq 0\}\subseteq V$ such that $\mathbf{M}(t)$ is nonempty and compact and there exists $T>0$ such that $\mathbf{M}(t+T)=\mathbf{M}(t)$ for all $t\geq 0$.  Moreover, $\mathbf{M}(t)$ satisfies properties (ii) and (iii) in Definition \ref{defn:EXPAT} for all $t\geq 0$ with uniform constants $\xi$ and $\chi$ and the positive invariance (i) is replaced by 
\begin{itemize}
\item[(i'')] $\mathbf{M}$ is positively invariant in the non-autonomous sense, i.e., 
$$S(t)\mathbf{M}(s) \subseteq \mathbf{M}(t + s)\ \text{for all}\  t, s \geq 0.$$
\end{itemize}
\end{defn}

To construct  non-autonomous exponential attractors for semigroups defined on the time interval $[0,\infty)$, we only need the Lipschitz continuity of the semigroup in the phase space on a bounded absorbing set; the H\"older continuity in time is not required. Moreover, we obtain the same estimate for the fractal dimension as for $T$-discrete exponential attractors.

\begin{prop}
Let $\{S(t)\colon t\geq 0\}$ be an asymptotically closed semigroup on a complete metric space $(V,d)$ such that condition $(2)$ in Theorem \ref{thm:EXDEXPAT} holds with $T>0$ and a bounded absorbing set $\mathbf{B}\subseteq V$.
If the semigroup is Lipschitz continuous on $\mathbf{B}$, i.e., there exists $L_t>0$ such that 
$$d(S(t)x,S(t)y)\leq L_t d(x,y),\ x,y\in \mathbf{B},\ t\geq 0,$$
then there exists a non-autonomous exponential attractor  $\mathbf{M}= \{\mathbf{M}(t)\colon t\geq 0\}$ for the semigroup 
such that $\mathbf{M}(t+T)=\mathbf{M}(t)\subseteq\mathbf{B}$ for $t\geq 0$
and 
$$\dimf^{V}(\mathbf{M}(t))\leq\log_{\frac{1}{q}}h,\ t\geq 0.$$
Moreover, we have $\mathbf{A}\subseteq\mathbf{M_0}\subseteq\mathbf{M}(0)\subseteq\mathbf{B},$ where $\mathbf{A}$ is the global attractor and $\mathbf{M_0}$ is the $T$-discrete exponential attractor from Theorem \ref{thm:EXDEXPAT}.
\end{prop}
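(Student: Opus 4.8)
The plan is to extend the $T$-discrete exponential attractor $\mathbf{M_0}$ produced by Theorem~\ref{thm:EXDEXPAT} to a $T$-periodic non-autonomous set by flowing $\mathbf{M_0}$ along the semigroup over one period and repeating periodically. By Remark~\ref{rem:POSINV} I may assume $\mathbf{B}$ is positively invariant; then, since the semigroup is asymptotically closed and condition $(2)$ holds, Theorem~\ref{thm:EXDEXPAT} gives a compact $\mathbf{M_0}=\mathbf{A}\cup\mathbf{E_0}=\cl_V\mathbf{E_0}\subseteq\mathbf{B}$, a $T$-discrete exponential attractor with rate $\xi\in(0,\frac1T\ln\frac1q)$ and $\dimf^V(\mathbf{M_0})\le\log_{\frac1q}h$, together with the global attractor $\mathbf{A}\subseteq\mathbf{M_0}$. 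For $t\ge0$ I write $t=kT+t'$ with $k\in\N_0$, $t'\in[0,T)$, and set $\mathbf{M}(t)=S(t')\mathbf{M_0}$. This family is $T$-periodic by construction, and $\mathbf{M}(0)=S(0)\mathbf{M_0}=\mathbf{M_0}$, which already yields $\mathbf{A}\subseteq\mathbf{M_0}\subseteq\mathbf{M}(0)\subseteq\mathbf{B}$.

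Next I would check the structural properties. Since the semigroup is Lipschitz on $\mathbf{B}\supseteq\mathbf{M_0}$, the map $S(t')$ is continuous on $\mathbf{M_0}$, so $\mathbf{M}(t)=S(t')\mathbf{M_0}$ is nonempty and compact, and positive invariance of $\mathbf{B}$ gives $\mathbf{M}(t)\subseteq S(t')\mathbf{B}\subseteq\mathbf{B}$. A Lipschitz map does not increase the fractal dimension (a cover of $\mathbf{M_0}$ by $N$ $\eps$-balls centered in $\mathbf{M_0}$ is carried into a cover of $\mathbf{M}(t)$ by $N$ balls of radius $L_{t'}\eps$ centered in $\mathbf{M}(t)$), hence $\dimf^V(\mathbf{M}(t))\le\dimf^V(\mathbf{M_0})\le\log_{\frac1q}h$ with the bound $\chi=\log_{\frac1q}h$ independent of $t$. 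For positive invariance in the non-autonomous sense, given $s,t\ge0$ with $s=kT+s'$, $s'\in[0,T)$, one has $S(t)\mathbf{M}(s)=S(t)S(s')\mathbf{M_0}=S(t+s')\mathbf{M_0}$; writing $t+s'=mT+r$ with $m\in\N_0$, $r\in[0,T)$, and using $S(mT)\mathbf{M_0}\subseteq\mathbf{M_0}$ (iterating $S(T)\mathbf{M_0}\subseteq\mathbf{M_0}$) gives $S(t+s')\mathbf{M_0}=S(r)S(mT)\mathbf{M_0}\subseteq S(r)\mathbf{M_0}=\mathbf{M}(t+s)$, since $r=(t+s')\bmod T=(t+s)\bmod T$.

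The main point, and the step that genuinely uses the Lipschitz hypothesis, is the exponential attraction of each individual fiber $\mathbf{M}(t)$ with a rate $\xi$ uniform in $t$. I fix a bounded $G\subseteq V$ and $t_G\ge0$ with $S(r)G\subseteq\mathbf{B}$ for $r\ge t_G$. For large $s$, set $r=s-t'\to\infty$; then $S(s)G=S(t')S(r)G$ with $S(r)G\subseteq\mathbf{B}$ and $\mathbf{M_0}\subseteq\mathbf{B}$, so the Lipschitz bound for $S(t')$ on $\mathbf{B}$ gives $\dist^V(S(s)G,\mathbf{M}(t))=\dist^V(S(t')S(r)G,S(t')\mathbf{M_0})\le L_{t'}\dist^V(S(r)G,\mathbf{M_0})$. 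Multiplying by $e^{\xi s}$ and using that $\mathbf{M_0}$ is a $T$-discrete exponential attractor with rate $\xi$, i.e. $e^{\xi r}\dist^V(S(r)G,\mathbf{M_0})\to0$, I obtain $e^{\xi s}\dist^V(S(s)G,\mathbf{M}(t))\le L_{t'}e^{\xi t'}\,e^{\xi r}\dist^V(S(r)G,\mathbf{M_0})\to0$ as $s\to\infty$. The rate $\xi\in(0,\frac1T\ln\frac1q)$ is the same for every $t$ (only the constant $L_{t'}$ depends on $t$), which gives property~(iii) of Definition~\ref{defn:EXPAT} uniformly and finishes the proof. The only delicate point is precisely this per-fiber attraction with uniform rate; everything else is a routine transfer of the properties of $\mathbf{M_0}$ through the Lipschitz maps $S(t')$.
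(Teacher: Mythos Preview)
Your proof is correct and follows essentially the same construction as the paper: you set $\mathbf{M}(t)=S(t')\mathbf{M_0}$ with $t'=t\bmod T$, while the paper defines $\mathbf{M}(t)=\cl_{V}(S(t')\mathbf{E_0})$, and these coincide because $\mathbf{M_0}=\cl_V\mathbf{E_0}$ is compact and $S(t')$ is Lipschitz (hence continuous) on $\mathbf{B}$. The paper's own proof is just a one-line sketch deferring details to \cite{sonner2012}, so your write-up in fact supplies exactly the verifications (compactness, dimension bound via Lipschitz images, non-autonomous positive invariance, and per-fiber exponential attraction with uniform rate) that the paper omits.
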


\begin{proof}
Let $\mathbf{M_0}=\cl_{V}\mathbf{E_0}$ be the $T$-discrete exponential attractor constructed in Theorem \ref{thm:EXDEXPAT}. We then obtain a non-autonomous exponential attractor  $\mathbf{M}= \{\mathbf{M}(t)\colon t\geq 0\}$ by setting
$$\mathbf{M}(t)=\cl_{V}(S(t)\mathbf{E_0}),\ t\in [kT,(k+1)T),\ k\in \N_0.$$
For details we refer to the proof of Theorem 3.15 in \cite{sonner2012}.
\end{proof}

\section{Supplementary observations}\label{sec:FC}

The notion of exponential attractors appeared in several hundreds of scientific papers and the existence of exponential attractors was proved for various problems originating from the Applied Sciences. Including examples for each construction method or a complete overview of these applications is beyond the scope of this paper. 
We refer the reader to monographs, survey articles and papers containing a~wide range of applications, like \cite{chueshov,cl2008,EFNT94, MiZe, CCM}. Nevertheless, we point out that some construction methods are more natural to use in certain situations and will mention a few classical problem classes as examples.  Semigroups satisfying the squeezing property and Ladyzhenskaya type semigroups (see Sections \ref{sec:SQ} and \ref{sec:Lad})  were originally designed for problems set in Hilbert spaces and exploit orthogonal projections onto finite-dimensional subspaces spanned by eigenfunctions of the main linear operator. Prominent examples are reaction diffusion equations and the 2D Navier-Stokes equations in bounded domains in an $L^2$- or $H^1$-setting, see e.g. \cite{EFNT94}. 
If an evolution equation is considered in a Banach space, which is not a Hilbert space, the more versatile (generalized) smoothing property (see Section \ref{sec:SP}) is particularly useful and can be applied, e.g. to problems generating semigoups in H\"older spaces or $L^p$-spaces, $p\neq2$, like in \cite{CaSo14, EfMiZe}. 
In fact, this method has been applied to a wide range of problems. For instance, semilinear parabolic equations in bounded domains, such as reaction-diffusion equations, Navier-Stokes type equations and Cahn-Hilliard type equations, satisfy the smoothing property without contraction mapping, i.e., $C\equiv0$ in \eqref{eq:contraction}, see e.g. \cite{EfMiZe,KoSo1, MiZe05}. On the other hand, semilinear damped wave equations in bounded domains or equations with memory satisfy the smoothing property with contraction term $C\not\equiv 0$, see e.g. \cite{CaSo14,CCM, GGP,KoSo}. 
The most general construction method based on quasi-stability (see Section \ref{sec:QS}) can be applied to parabolic equations in unbounded domains, to wave and plate type models with nonlinear and thermal damping or to lattice dynamical systems stemming from discretized parabolic problems, see e.g.  \cite{chueshov,cl2008,CC20,CC21,CC23}. 
Once again, we emphasize that the results in our paper provide a unifying framework for the construction of exponential attractors and allow to identify how the different construction methods are related, as outlined in the figures in the Introduction. Thus, in general, one can apply more than one method to construct an exponential attractor for a specific problem, but certain properties are easier to verify than others depending on the concrete situation, or lead to sharper estimates for the fractal dimension in a Hilbert space setting (see Sections \ref{sec:SQ} and \ref{sec:Lad}). 

Some approaches to construct exponential attractors and related notions used in the literature are still worth additional comments.  
J.~M\'alek and D.~Pra\v{z}\'ak introduced in \cite{MP02} the \emph{$\ell$-trajectory approach} considering  pieces of solutions on a time interval of given length $\ell>0$ with values in the phase space. The space of $\ell$-trajectories is a metric space, which is not necessarily complete.
The existence of a global attractor in the space of $\ell$-trajectories for such semigroups can be shown by Theorem~\ref{thm:existglobal} applying the Aubin-Lions-Dubinski compactness theorem. The latter result is also helpful to verify the covering condition~\eqref{e:CONDBALLSSEMB} by the methods presented in this paper exploiting the available compact embedding. For instance,  in \cite{MP02} the squeezing property was used to prove the existence of an exponential attractor in the space of $\ell$-trajectories. Then, using the H\"older continuity of the map which assigns to each $\ell$-trajectory its end point, one can obtain a global attractor and an exponential attractor for the semigroup in the original phase space.      
Sometimes authors also use known constructions in their own context, e.g., Ladyzhenskaya type semigroups from Section~\ref{sec:Lad} were applied in \cite{CH25} to construct $T$-discrete exponential attractors in Banach spaces for functional differential equations.

Theorem~\ref{thm:existglobal} provides different conditions that guarantee that a semigroup in a metric space $V$ possesses a global attractor $\mathbf{A}$ which are equivalent to the existence of a~nonempty bounded absorbing set combined with the \emph{asymptotic compactness} of the semigroup. A~semigroup is asymptotically compact if for any bounded subset $B\subseteq V$ such that $\gamma^{+}(B)=\{S(t)x\colon x\in B,\ t\geq 0\}$ is eventually bounded in~$V$, i.e.,
$S(t_0)\gamma^{+}(B)$ is bounded in $V$ for some $t_0\geq 0$, and for any sequences $t_k\geq 0$, $t_{k}\to\infty$ and $x_k\in B$, there exists a~convergent subsequence of $S(t_k)x_k$.  
There are many conditions in the literature that are equivalent to  the asymptotic compactness of the semigroup,
namely the flattening condition \cite{MWZ02,KL07,CLR13}, the double limes inferior condition \cite{Khanmamedov06}, \cite[Proposition 2.2.18]{chueshov}, and other conditions found e.g. in \cite{chueshov, Lad91}. Such conditions are frequently used as tools to prove the existence of the global attractor. However, to show existence of a $T$-discrete exponential attractor, in addition, one needs to verify the covering condition~\eqref{e:CONDBALLSSEMB}, see Theorem~\ref{thm:EXDEXPAT}.  

Note that if a semigroup on a bounded absorbing set $\mathbf{B}$ at time $T>0$ in a complete metric space $(V,d)$  is quasi-stable, then $S(T)$ is an \emph{$\alpha$-contraction} with $\eta\in[0,1)$. This means that the semigroup possesses a  bounded absorbing set $\mathbf{B}$ and for any nonempty subset $B$ of $\mathbf{B}$ such that $\gamma^{+}(B)\subseteq B$ we have
$$\alpha_{V}(S(T)B)\leq\eta\alpha_{V}(B),$$
where $\alpha_{V}(\cdot)$ denotes the Kuratowski measure of noncompactness. 
This implies that
$$\alpha_{V}(S(t)B)\to 0\ \text{ as}\ t\to\infty$$
for any bounded subset $B$ such that $\gamma^{+}(B)\subseteq B$, and hence,  the semigroup is asymptotically compact.  
For maps which are $\alpha$-contractions, the authors in \cite{EFK98} used, however, the squeezing property from Section~\ref{sec:SQ} to construct exponential attractors.

Finally, we observe that the global attractor $\mathbf{A}$ may itself attract all bounded subsets exponentially. Sufficient conditions for this property were formulated e.g. in \cite{CCH11}. Although the authors did not require the finite fractal dimension of the attractor, it is clear that in such a situation one may employ classical methods to prove the finite fractal dimension of the global attractor, using e.g. exponential decay of the volume element or the Lyapunov exponents \cite[Theorems V.3.2, V.3.3]{temam} or the quasi-stability of the semigroup on the global attractor~$\mathbf{A}$ \cite[Theorem 3.4.5]{chueshov}.

\section*{Acknowledgements}
The authors thank the referee for the valuable comments and 
Jan Cholewa for inspiring discussions on the notion of quasi-stability. The first author would like to kindly thank colleagues at the Radboud University in Nijmegen for their hospitality during his visit at this institution. The second author is grateful for the great hospitality of colleagues at the University of Silesia during her stay in Katowice.

\end{document}